\documentclass[11pt,reqno]{amsart}

\usepackage[T1]{fontenc}
\usepackage[utf8]{inputenc}

\usepackage{newtxtext}
\usepackage{newtxmath}

\usepackage[
a4paper,
left=27mm,
right=27mm,
top=25mm,
bottom=25mm
]{geometry}

\usepackage{mathtools}   
\usepackage{mathrsfs}
\usepackage{extarrows}

\usepackage[shortlabels]{enumitem}
\usepackage{graphicx}
\usepackage{float}
\usepackage{tikz}
\usetikzlibrary{arrows.meta,patterns,decorations.pathreplacing,calc}
\usepackage{booktabs}
\usepackage{xcolor}
\usepackage{microtype}

\setlist{
	topsep=4pt,
	itemsep=1pt,
	parsep=0pt,
	partopsep=0pt
}

\usepackage{xcolor}

\definecolor{myred}{RGB}{220,0,0}
\definecolor{myblue}{RGB}{30,100,230}
\definecolor{myurlblue}{RGB}{40,110,230}

\usepackage[
colorlinks=true,
linkcolor=myblue,
citecolor=myred,
urlcolor=myurlblue
]{hyperref}
\makeatletter
\renewcommand{\tagform@}[1]{%
	\maketag@@@{\textcolor{myblue}{(\ignorespaces#1\unskip\@@italiccorr)}}%
}
\makeatother
\allowdisplaybreaks[2]

\newtheorem{theorem}{Theorem}[section]
\newtheorem{lemma}[theorem]{Lemma}

\newtheorem{theoremA}{Theorem}

\theoremstyle{definition}

\theoremstyle{remark}
\newtheorem{remark}[theorem]{Remark}

\numberwithin{equation}{section}

\newcommand{\nin}{\notin}

\author{Yanping Chen$^{*}$}
\thanks{Corresponding author: Yanping Chen. Email: yanpingch@126.com.}
\address{Department of Mathematics, Northeastern University; State Key Laboratory of Synthetical Automation for Process Industries, Shenyang 110004, China}
\email{yanpingch@126.com}

\author{Yongrui Tang}
\address{Department of Mathematics, Northeastern University, Shenyang 110004, China}
\email{yongruit@163.com}

\subjclass[2020]{ 35Q41, 42B25,42B37}
\keywords{Schrödinger equation, variation operator, maximal estimate,fractal estimate}
\thanks{There search was partially supported by the National Natural Science Foundation
	of China (Grant No.12525105 and 12371092).}

\begin{document}

	\title[]{Sharp Variational estimates for Schr\"odinger equation in $\mathbb{R}^{2}$}


	\subjclass[2000]{35Q41; 42B20; 42B37}
	\date{}

	\dedicatory{}

\keywords{Schrödinger equation; variation operators; maximal estimates; fractal estimates}

	\begin{abstract}
	For $p\geq2$ and $r\geq 1$, we prove the sharp strong $r$-variation estimate for  solutions to the free Schr\"odinger equation
	$$
	\bigl\|V^r_{[0,1]}\{e^{it\Delta}f\}\bigr\|_{L^p(\mathbb{R}^2)}
	\lesssim
	\|f\|_{H^s(\mathbb{R}^2)}.
	$$
	The estimate holds if $s> s(p,r)$ and fails if $s< s(p,r)$, where
$$s(p,r)=\max\left\{
\frac{2}{r}+\frac{1}{p}-\frac12,
\frac{2}{r}-\frac{2}{p},
\frac{1}{p}+\frac{1}{r},
\frac{2}{p}-\frac13,
1-\frac{2}{p}
\right\}.$$
	In particular,
	$s(3,\infty)=1/3$. Thus our result yields a sharp variational extension of the two-dimensional Schrödinger maximal estimate of Du, Guth, and Li [{Ann. Math.}, 186, 607--640 (2017)]. The proof employs fractal restriction estimates to eliminate the regularity loss originating from the summation in the variation norm.
	\end{abstract}

	\maketitle

	\tableofcontents \pagenumbering{arabic}



\section{Introduction}
The solution to the free Schr\"odinger equation
$$
\left\{\begin{array}{ll}
i\partial_tu+\Delta u=0,&(x,t)\in\mathbb{R}^2\times\mathbb{R},\\
u(x,0)=f(x),&x\in\mathbb{R}^2,
\end{array}\right.
$$
is given by
$$
u(x,t)=e^{it\Delta}f(x)
=\frac{1}{(2\pi)^2}\int_{\mathbb{R}^2}
e^{i(x\cdot\xi-t|\xi|^2)}\mathcal{F}_x(f)(\xi)d\xi.
$$

Let ${\mathcal{F}}=\{F(x,t) : t \in I \subset \mathbb{R}\}$ be a family of Lebesgue measurable functions defined on $\mathbb{R}^2$.
For $t\in I:=[a,b]$, $x\in \mathbb{R}^{2}$ and
$1\leq r\leq\infty$, the value of the $r$-variation function $V_I^r\{F(x,t)\}_t$ of the family $\mathcal{F}$ at $x$ is
defined by
$$
V_I^r\{F(x,t)\}_t:=
\left\{\begin{array}{ll}
	\sup_{a\leq t_0<\cdots<t_J\leq b}
	\left(\sum_{j=1}^J
	|F(x,t_j)-F(x,t_{j-1})|^r\right)^{1/r},\quad r<\infty;\\[7pt]
	2\sup_{t\in I}|F(x,t)|,\quad \quad\quad \quad\quad \quad\quad \quad\quad \quad \quad \quad\quad \quad \quad r=\infty.
\end{array}\right.
$$ 
More precisely, for any $t_0\in I$,
$$\sup_{t\in I}|F(x,t)|\leq |F(x,t_0)|+V_I^r\{F(x,t)\}.$$

Suppose $\mathcal{A}=\{A_t\}_{t\in I}$ is a family of operators on $L^p(\mathbb{R}^2)$ with $1 \le p \le \infty$. The associated strong $r$-variation operator $V_I^r\mathcal{A}$ is defined by
$$
V_I^r \{\mathcal{A}(f)(x)\} = V_I^r(\{A_t f(x)\}_{t\in I}).
$$

There are two elementary yet important observations that motivate the study of variational inequalities in ergodic theory and harmonic analysis. First, if $V_I^r(\mathcal{F})(x)<\infty$ for some finite $r$, then $F(x,t)$ converges as $t\to t_0$ whenever $t_0$ is an adherent point of $I$. From this fact, one readily deduces that $A_t(f)$ converges almost everywhere for all $f\in L^p$, provided that $V_I^r\mathcal{A}$ (with finite $r$) is of weak type $(p,p)$ and $p<\infty$. Second, the $r$-variation operator pointwise dominates the maximal function: for any $r\ge 1$,
\begin{eqnarray*}
	\sup_{t\in I}|A_t(f)(x)|\le |A_{t_0}f(x)| + V_I^r \{\mathcal{A}f(x)\}, 
\end{eqnarray*}
where $t_0\in I$ is  any fixed point.

In the present paper, we investigate the sharp boundedness of the $r$-variational Schr\"odinger operator over the local unit time interval and the global space.  Our focus here is the optimal Sobolev regularity required to control the time variation, along with its dependence on the spatial exponent $p$ and the variation exponent $r$.

For
$2\leq p\leq\infty$, define
$$m(p):=
\left\{\begin{array}{ll}
\displaystyle \frac{2}{p}-\frac{1}{3}, &2\leq p\leq3,\\[5pt] 
\displaystyle 1-\frac{2}{p}, &3\leq p\leq\infty,
\end{array}\right.$$
and set
$$s(p,r):=
\left\{\begin{array}{ll}
\displaystyle \frac{2}{r}+\frac{1}{p}-\frac{1}{2}, &1\leq r\leq2,\quad 2\leq p\leq6,\\[7pt]
\displaystyle \frac{2}{r}-\frac{2}{p},&1\leq r\leq2,\quad 6\leq p\leq\infty,\\[7pt]
\displaystyle \max\left\{m(p),\frac1p+\frac1r\right\},&2<r\leq\infty.
\end{array}\right.$$

\begin{theorem}\label{Th-1}
Let $2\leq p\leq\infty$ and $1\leq r\leq\infty$.  If $s> s(p,r)$, then
\begin{eqnarray}\label{adeq-2}
\left\|V^r_{[0,1]}\{e^{it\Delta}f\}\right\|_{L^p(\mathbb{R}^2)}\lesssim_{p,r,s}\|f\|_{H^s(\mathbb{R}^2)}.
\end{eqnarray}
The estimate fails whenever  $s< s(p,r)$. 
\end{theorem}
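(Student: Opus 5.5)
Both directions are handled by working frequency-localized: the positive part comes from Littlewood--Paley pieces, the negative part from explicit counterexamples. Write $f=\sum_k P_kf$ with $\widehat{P_kf}$ supported where $|\xi|\sim 2^k$, and use $V^r\le\sum_k V^r\{e^{it\Delta}P_kf\}$. Since $s>s(p,r)$ is strict, the $\epsilon$-room can absorb the summation in $k$, so it suffices to prove for $\operatorname{supp}\hat f\subset\{|\xi|\sim R\}$, $R=2^k$,
\begin{equation*}
\bigl\|V^r_{[0,1]}\{e^{it\Delta}f\}\bigr\|_{L^p(\mathbb{R}^2)}\lesssim_\epsilon R^{s(p,r)+\epsilon}\|f\|_{L^2}.
\end{equation*}
The case $r=\infty$ is the maximal estimate. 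At $p=3$ this is Du--Guth--Li. For $p\ge 3$ one interpolates it with the trivial Sobolev/Bernstein bound at $p=\infty$, which gives $m(p)=1-2/p$. For $2\le p\le 3$ one interpolates with the local $L^2$ maximal bound $R^{1/2}$ at $p=2$ (the global $L^2$ bound is again $R^{1/2}$ by localization), which gives $m(p)=2/p-1/3$.

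For finite $r$, split $[0,1]$ into intervals $I_\nu$ of length $R^{-2}$, the time scale on which $e^{it\Delta}f$ is essentially constant. Bound $V^r$ by a short-variation part and a long-variation part. The short part is controlled by $\bigl(\sum_\nu V^r_{I_\nu}\{\cdot\}^r\bigr)^{1/r}$, and on each $I_\nu$ one uses $V^1_{I_\nu}\lesssim\int_{I_\nu}|\partial_t e^{it\Delta}f|\,dt$ together with $|\partial_t|\sim R^2$. The long part is a variation over the $R^2$ grid points $t_\nu$; bound it by $\bigl(\sum_\nu|e^{it_\nu\Delta}f|^r\bigr)^{1/r}$ in a range of $r$, or by the maximal function. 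Crude interpolation here (with Strichartz for $r$ small) loses a factor $R^{2/r}$. This is the origin of the terms $2/r+1/p-1/2$ and $2/r-2/p$ for $r\le 2$. Those two terms follow from the $L^p_xL^r_t$ local-smoothing/Strichartz estimate of Bourgain--Demeter decoupling type, sharp at $p=6$, combined with the averaging just described.

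For $r>2$ the target is $\max\{m(p),1/p+1/r\}$, and the naive bound loses too much. The plan is to use a level-set decomposition in $x$: for each $x$, the variation is dominated by $r$-sums of differences across jumps. One dyadically pigeonholes the size $\lambda$ of the differences and the number $N$ of jumps of size $\sim\lambda$. The set of $(x,t)$ where $|e^{it\Delta}f|\gtrsim\lambda$ is a union of $R^{-1}\times R^{-2}$ cells, with at most $N$ cells per vertical line. This is exactly the setting of the fractal (weighted) $L^2$ restriction estimates: a measure $\mu$ of dimension $\alpha$, with $\|e^{it\Delta}f\|_{L^2(d\mu)}\lesssim R^{\gamma(\alpha)}\|f\|_2$ (Du--Zhang type). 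Using $\gamma$ for the dimension determined by $N$, one gets a bound $N^{1/r}\lambda\,|\{x\}|^{1/p}$ that sums over the dyadic $(\lambda,N)$ without losing $R^{2/r}$, yielding $1/p+1/r$. I expect this to be the main obstacle: matching the fractal exponent with the number of jumps so that the $N^{1/r}$ gain beats the loss, and then interpolating with the maximal bound.

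For necessity, the examples are as follows.
\begin{itemize}
\item A single frequency bump $\hat f=\chi_{B(Re_1,1)}$: $|e^{it\Delta}f|\sim1$ on $|x|\lesssim1$ and the solution oscillates about $R^2$ times, so $V^r\gtrsim R^{2/r}$ on a set of measure about $1$ and the estimate requires $s\ge 2/r$ minus the appropriate normalizations. Using $\hat f$ on an $R$-ball or an $R^{1/2}$-ball instead gives $2/r-2/p$ and $2/r+1/p-1/2$.
\item A Knapp-type example gives $1/p+1/r$.
\item Since $V^r\ge\sup-|\cdot|$, all maximal counterexamples transfer: Bourgain's example gives $2/p-1/3$ (the case of Du--Guth--Li), and a point-concentrating bump gives $1-2/p$.
\end{itemize}
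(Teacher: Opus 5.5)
Your $r\le2$ sufficiency argument and the $p\ge3$ maximal bound are fine. The rest of the sufficiency part has real gaps.

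For $r=\infty$, $2\le p<3$, interpolating $R^{1/2}$ at $p=2$ with $R^{1/3+\epsilon}$ at $p=3$ gives the exponent $1/p$, not $2/p-1/3$. Since $1/p<2/p-1/3$ for $p<3$, this would contradict the rescaled Bourgain example you cite for necessity. The false step is the claim that the global $L^2$ bound at frequency $R$ is $R^{1/2}$ ``by localization''. Frequency-$R$ data move a distance $\sim R$ in unit time, so you must control balls of radius $R$. After parabolic rescaling this is $B(0,R^2)\times[0,R^2]$ at unit frequency, where the $L^2$ maximal bound is $R^{2/3+\epsilon}$ (H\"older from Du--Guth--Li), and this is sharp. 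The correct derivation is H\"older on $B(0,R)$ from Du--Guth--Li at unit frequency, giving $\beta=2/p-2/3$, followed by $s_0=1-2/p+2\beta=2/p-1/3$.

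More seriously, you leave open the case $2<r<\infty$, which is the actual content of the theorem, and the outlined route cannot close it as stated. At fixed $p$, H\"older-interpolating the maximal bound with a sharp bound at some $r_0<q_p=3p/(3-p)$ gives a chord strictly above $\max\{m(p),1/p+1/r\}$. For example, at $p=2$, $V^2$ (threshold $1$) and $V^\infty$ (threshold $2/3$) give $7/9$ at $r=6$ instead of $2/3$. So the fractal estimate has to deliver the estimate on the critical curve $r=q_p$ directly.

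The missing mechanism works at unit frequency on $B(0,R)\times[0,R]$, with cells rather than jumps: your short/long splitting plus a reproducing formula reduces to the $\ell^p_j\ell^r_m$ norm of the local $L^2$ masses $f_{j,m}$ on unit cells $Q_j\times I_m$. Let $E_{\lambda,K}=\{Q_j:\#\{m:f_{j,m}\ge\lambda\}\approx K\}$. The selected cells have $\gamma\lesssim K$ at $\alpha=2$, so Du--Zhang gives $\lambda K^{1/2}|E_{\lambda,K}|^{1/2}\lesssim K^{1/3}R^{1/3+\epsilon}\|f\|_2$. Du--Guth--Li gives $\lambda|E_{\lambda,K}|^{1/3}\lesssim R^{\epsilon}\|f\|_2$. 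Their geometric mean with weight $\theta=6/p-2$ is the weak-$\ell^{q_p}$ bound $\lambda K^{1/p-1/3}|E_{\lambda,K}|^{1/p}\lesssim R^{2/p-2/3+\epsilon}\|f\|_2$. After decreasing rearrangement in $m$ and dyadic summation, this gives $V^{q_p}$ with $\beta=2/p-2/3$. Monotonicity covers $r\ge q_p$, and $1/p+1/r$ for $2<r<q_p$ then comes from H\"older interpolation with $V^2$.

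For $3<p<6$ your sketch gives no concrete mechanism. The paper uses the Li--Miao--Yu $L^p_xL^{p/(p-3)}_t$ estimates with a Sobolev-type variation inequality. Alternatively, H\"older of $V^2$ in $L^6$ (from the $L^6_xL^2_t$ bound you already use) against the maximal function in $L^3$ lands exactly on $1/r=1-3/p$.

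Your necessity examples for the $r\le2$ branches do not work. If $\hat f$ lives in a ball of radius $\rho\ge1$ about $Re_1$, the solution is a packet of width $\rho^{-1}$ moving at speed $2R$. It sits over a given point for time $\sim(\rho R)^{-1}$, during which the carrier $e^{-iR^2t}$ oscillates only $\sim R/\rho$ times.
\begin{itemize}
\item With $\rho=1$ you get $R$ oscillations, not $R^2$; this is exactly the $1/p+1/r$ example. A $p$-independent condition $s\ge2/r$ would even contradict sufficiency, e.g.\ $s(6,1)=5/3$.
\item With $\rho=R^{1/2}$ you get only $R^{1/2}$ oscillations.
\item A frequency ball of radius $R$ disperses within time $R^{-2}$.
\end{itemize}
To get $\sim R^2$ oscillations you need $|\xi|^2-R^2=O(1)$ on the support, i.e.\ radial thickness $\lesssim R^{-1}$. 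The $R^{-1}\times1$ rectangle $\{|\xi_1-R|\le1/(4R),\ |\xi_2|\le1/4\}$ gives $|e^{it\Delta}f|\sim R^{-1/2}$ on an $R\times1$ slab for $t\in[1/4,1/2]$, hence $s\ge2/r+1/p-1/2$. The annulus $\{R\le|\xi|\le R+1/(100R)\}$ gives $|e^{it\Delta}f|\gtrsim1$ on $|x|\lesssim R^{-1}$ for all $t\in[0,1]$, hence $s\ge2/r-2/p$.
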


Figure \ref{fig:admissible-region} shows the five expressions that constitute the boundary of Theorem \ref{Th-1}.
\begin{figure}[H]
	\centering
	\resizebox{0.7\textwidth}{!}{%
		\begin{tikzpicture}[
			x=1.36cm,y=0.65cm,
			axis/.style={line width=0.85pt,-{Latex[length=2.2mm]}},
			boundary/.style={line width=1.05pt},
			guide/.style={gray!70,dashed,line width=0.55pt},
			tick/.style={line width=0.65pt},
			region/.style={align=center,font=\small},
			note/.style={align=center,font=\footnotesize},
			curve label/.style={note,inner sep=1.2pt},
			label arrow/.style={gray!75,line width=0.5pt,-{Latex[length=1.7mm]}}
			]
			\def\xzero{0.35}\def\xtwo{2.10}\def\xthree{5.20}\def\xsix{8.25}\def\xmax{10.65}
			\def\yone{1.45}\def\ytwo{3.15}\def\ysix{7.10}\def\ytop{14.20}
			\fill[gray!10] (\xzero,0) rectangle (\xtwo,\ytop);
			\fill[pattern=north east lines,pattern color=gray!38] (\xzero,0) rectangle (\xtwo,\ytop);
			\fill[gray!10] (\xtwo,0) rectangle (\xmax,\yone);
			\fill[pattern=north east lines,pattern color=gray!38] (\xtwo,0) rectangle (\xmax,\yone);
			\draw[boundary] (\xtwo,0)--(\xtwo,\ytop);
			\draw[boundary] (\xtwo,\yone)--(\xmax,\yone);
			\draw[boundary] (\xtwo,\ytwo)--(\xmax,\ytwo);
			\draw[guide] (\xthree,\yone)--(\xthree,\ytop+0.10);
			\draw[guide] (\xsix,\yone)--(\xsix,\ytwo);
			\draw[guide] (\xzero,\ysix)--(\xtwo,\ysix);
			\draw[boundary,-{Latex[length=2.0mm]}] (\xtwo,\ysix)..controls (2.95,7.55) and (4.72,10.55)..(5.08,13.95);
			\draw[boundary,-{Latex[length=2.0mm]}] (\xsix,\ytwo)..controls (7.05,4.15) and (5.58,9.10)..(5.32,13.95);
			\draw[axis] (\xzero,0)--(\xmax+0.35,0) node[right] {$p$};
			\draw[axis] (\xzero,0)--(\xzero,\ytop+0.75) node[above] {$r$};
			\foreach \x/\lab in {\xtwo/2,\xthree/3,\xsix/6}{\draw[tick] (\x,-0.12)--(\x,0.12);\node[below=2pt] at (\x,0) {$\lab$};}
			\draw[tick] (\xzero-0.10,\yone)--(\xzero+0.10,\yone);\node[left=3pt] at (\xzero,\yone) {$1$};
			\draw[tick] (\xzero-0.10,\ytwo)--(\xzero+0.10,\ytwo);\node[left=3pt] at (\xzero,\ytwo) {$2$};
			\draw[tick] (\xzero-0.10,\ysix)--(\xzero+0.10,\ysix);\node[left=3pt] at (\xzero,\ysix) {$6$};
			\node[left=3pt] at (\xzero,\ytop+0.28) {$+\infty$};
			\node[note,fill=white,inner sep=2.5pt] at (\xthree,15.05) {$(p,r)=(3,+\infty)$\\[-1pt]classical maximal case: $ s=\frac13$};
			\node[region] at (3.20,10.25) {$\displaystyle  s=\frac{2}{p}-\frac13$};
			\node[region] at (5.05,5.00) {$\displaystyle  s=\frac1p+\frac1r$};
			\node[region] at (8.15,8.10) {$\displaystyle  s=1-\frac{2}{p}$};
			\node[region] at (5.05,2.27) {$\displaystyle  s=\frac{2}{r}+\frac1p-\frac12$};
			\node[region] at (9.45,2.27) {$\displaystyle  s=\frac{2}{r}-\frac{2}{p}$};
			\node[curve label] (leftcurve) at (4.20,7.95) {$\displaystyle r=\frac{3p}{3-p}$};
			\draw[label arrow] (leftcurve.west)--(2.90,7.72);
			\node[curve label] (rightcurve) at (7.55,6.05) {$\displaystyle r=\frac{p}{p-3}$};
			\draw[label arrow] (rightcurve.west)--(6.62,6.15);
			\fill (\xtwo,\ysix) circle (1.35pt);\fill (\xsix,\ytwo) circle (1.35pt);
			\node[note] at (1.22,4.10) {excluded range\\$p<2$};
			\node[note] at (6.65,0.68) {excluded range $r<1$};
			\node[font=\scriptsize,gray!65,anchor=north east] at (\xmax,-0.56) {schematic axes};
		\end{tikzpicture}%
	}
	\caption{The critical Sobolev index in the admissible $(p,r)$-region.}
	\label{fig:admissible-region}
\end{figure}

\begin{remark}\label{rek-1}
For $2\leq p\leq\infty$, define
$$m(p)=
	\begin{cases}
		\displaystyle \frac2p-\frac13,& 2\leq p\leq3,\\[4pt]
		\displaystyle 1-\frac2p,& 3\leq p\leq\infty.
	\end{cases}$$
Then
	\begin{eqnarray}\label{eq-1}
		\left\|
		\sup_{0<t<1}|e^{it\Delta}f|
		\right\|_{L^p(\mathbb{R}^2)}
		\lesssim
		\|f\|_{H^s(\mathbb{R}^2)}
	\end{eqnarray}
	holds for $s>m(p)$ and fails for $s<m(p)$ by the result of Theorem \ref{Th-1}. 	In particular, $ m(3)=1/3$,  which is exactly the Sobolev exponent of the maximal estimate of
	Du, Guth, and Li \cite{DGL2017}. Our results therefore give a sharp variational extension of their two-dimensional Schrödinger maximal estimate.
\end{remark}
\begin{remark}For $2\leq p\leq\infty$ and $1\leq r<\infty$, the main difficulty is to control the summation in the time-variation norm without introducing any loss beyond the sharp Sobolev threshold $s(p,r)$. 

\begin{enumerate} \item After decomposing the space--time region into unit cubes, we must account for both the size of the local contributions and the number of large contributions above each spatial cube. Bounding each contribution separately before summing over the time indices introduces an additional power of the scale, preventing the resulting estimate from reaching the sharp threshold.

\item We obtain the key estimate on the critical curve $r=q_p$. For each fixed spatial unit cube, we rearrange the local contributions from the corresponding space--time unit cubes in decreasing order. For each dyadic number $\lambda$, we group the spatial unit cubes according to the number of space--time unit cubes above them whose local contributions are at least $\lambda$. The bound on the number of selected cubes above each spatial cube controls the geometric density of their union.\item Combining the fractal $L^2$ estimate of Du--Zhang \cite{DZ2019} with the maximal estimate of Du--Guth--Li \cite{DGL2017}, we obtain bounds for the weak $\ell^{q_p}$ norms of the rearranged sequences. A further summation gives the strong $V^{q_p}$ estimate with logarithmic losses at the frequency-localized scale. These losses are absorbed by the strict inequality
$s>2/p-1/3$, and monotonicity gives the same regularity threshold for $q_p<r<\infty$.\end{enumerate}
\end{remark}

Furthermore, we also investigate the inhomogeneous variation operator (this definition also see \cite[Section 8]{DLTT2008}) to obtain the following conclusion:
\begin{theorem}\label{co--2}
	Let $2\leq p\leq\infty$ and $1\leq r\leq\infty$.  If $s> s(p,r)$, then
	$$
	\left\|\dot {V}^r_{[0,1]}\{e^{it\Delta}f\}\right\|_{L^p(\mathbb{R}^2)}
	\lesssim_{p,r,s}\|f\|_{H^s(\mathbb{R}^2)},
	$$
	where $\dot V^r_{[0,1]}\{e^{it\Delta}f(x)\}$ is defined as 
	$$\dot V^r_{[0,1]}\{e^{it\Delta}f(x)\}:=\sup_{t\in [0,1]}|e^{it\Delta}f(x)|+ {V}^r_{[0,1]}\{e^{it\Delta}f(x)\}. $$ 
	The estimate fails whenever  $s< s(p,r)$. 
\end{theorem}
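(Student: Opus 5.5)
Theorem \ref{co--2} follows from Theorem \ref{Th-1} together with the maximal estimate \eqref{eq-1} of Remark \ref{rek-1}. We compare the two operators pointwise. By definition, $\dot V^r_{[0,1]}\{e^{it\Delta}f\}(x)\ge V^r_{[0,1]}\{e^{it\Delta}f\}(x)$. Conversely, taking $t_0=0$ in the elementary inequality $\sup_{t\in I}|F(x,t)|\le |F(x,t_0)|+V^r_I\{F(x,t)\}$ gives
$$
V^r_{[0,1]}\{e^{it\Delta}f(x)\}\le \dot V^r_{[0,1]}\{e^{it\Delta}f(x)\}\le |f(x)|+2V^r_{[0,1]}\{e^{it\Delta}f(x)\}.
$$
For $r=\infty$ the variation is itself $2\sup_t|e^{it\Delta}f|$, so the two operators agree up to a factor $3/2$.

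\textbf{Sufficiency.} Let $s>s(p,r)\ge 0$. Taking $L^p$ norms in the right-hand inequality gives
$$
\|\dot V^r_{[0,1]}\{e^{it\Delta}f\}\|_{L^p}\le \|f\|_{L^p}+2\|V^r_{[0,1]}\{e^{it\Delta}f\}\|_{L^p}.
$$
The second term is controlled by Theorem \ref{Th-1}. For the first term we need $\|f\|_{L^p}\lesssim\|f\|_{H^s}$, and this is the only point that needs care: plain Sobolev embedding requires $s\ge 1-2/p$, which $s(p,r)$ need not exceed when $r\le 2$ and $p<6$. We avoid the comparison altogether by noting $|f(x)|\le\sup_{0\le t\le1}|e^{it\Delta}f(x)|$ and invoking \eqref{eq-1}. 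This requires $s>m(p)$, which holds because $s(p,r)\ge m(p)$ for every $r$. That inequality is immediate from the abstract's formula, where $s(p,r)$ is a maximum containing both $2/p-1/3$ and $1-2/p$, and $m(p)=\max\{2/p-1/3,\,1-2/p\}$. If one prefers to stay within the case formula in Theorem \ref{Th-1}, one can instead use $s(p,r)\ge s(p,\infty)=m(p)$. Either way, \eqref{eq-1} is itself a consequence of Theorem \ref{Th-1}, as Remark \ref{rek-1} states, so nothing beyond earlier results is used.

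\textbf{Necessity.} Suppose the estimate of Theorem \ref{co--2} held for some $s<s(p,r)$. The left-hand pointwise inequality would then give \eqref{adeq-2} for the same $s$, contradicting the failure part of Theorem \ref{Th-1}.

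The only step needing attention is controlling the $\sup$ term, or equivalently $|f|$, in sufficiency, i.e.\ checking $m(p)\le s(p,r)$; everything else is a direct transfer from Theorem \ref{Th-1}.
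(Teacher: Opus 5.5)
Your argument is correct and is essentially the paper's: you bound $\dot V^r$ by the maximal function plus $V^r$, control the former by the maximal estimate \eqref{eq-1} (the paper cites Du's thesis for it) and the latter by Theorem \ref{Th-1}, and get necessity from $\dot V^r\ge V^r$; the detour through $|f|$ is unnecessary, since you can apply \eqref{eq-1} to the $\sup$ term of $\dot V^r$ directly. One side remark needs correcting: $m(p)\ge 1-\frac2p$, so your own inequality $s(p,r)\ge m(p)$ shows that Sobolev embedding would also have sufficed to control $\|f\|_{L^p}$, and the claim that $s(p,r)$ can fall below $1-\frac2p$ when $r\le 2$, $p<6$ is false.
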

By  \cite[Theorem 1.5]{Du2017T} and Theorem \ref{Th-1}, we have 
\begin{eqnarray*}
	\left\|\dot {V}^r_{[0,1]}\{e^{it\Delta}f\}\right\|_{L^p(\mathbb{R}^2)}
	\lesssim_{p,r,s} \left\|
	\sup_{0<t<1}|e^{it\Delta}f|
	\right\|_{L^p(\mathbb{R}^2)}+\left\|V^r_{[0,1]}\{e^{it\Delta}f\}\right\|_{L^p(\mathbb{R}^2)}
	\lesssim\|f\|_{H^{s}(\mathbb{R}^{2})},
\end{eqnarray*}
where $s> s(p,r)$. Therefore, it suffices to prove  Theorem \ref{Th-1} in order to establish Theorem \ref{co--2}.

Carleson \cite{C1980} raised the question of determining for which $s$ the pointwise convergence
\begin{eqnarray}\label{addeq-1}
	\lim_{t\to0}e^{it\Delta}f(x)=f(x)
\end{eqnarray}
holds almost everywhere for all $f\in H^s(\mathbb{R}^n)$. He established this result for $s\geq 1/4$ in the case \(n=1\)(see \cite{C1980}).
Dahlberg--Kenig \cite{DK1982}showed that convergence may fail for $s<1/4$ in all dimensions.
For $n\geq2$, Sj\"olin \cite{S1987} and Vega \cite{V1988}
independently proved convergence for $s>1/2$.
In two dimensions, Bourgain \cite{B1992,B1995} proved
convergence for $s>1/2-\delta$ with some $\delta>0$.
Lee \cite{L2006} improved this to $s>3/8$ by bilinear
restriction estimates.
For many years, it had seemed plausible that convergence actually holds
for $s > 1/4 $ in every dimension. It was not until 2012 that Bourgain \cite{B2013}  gave a counterexample showing that this is false in sufficiently high dimensions and in \cite{B2016}, Bourgain gave counterexamples showing that convergence
may fail if $s < n/ 2(n+1) $. In particular, for $n = 2$, convergence may fail if $s < 1/3.$ In 2017, Du--Guth--Li \cite{DGL2017} established the 
$L^3$ maximal estimate and showed that convergence holds almost everywhere for all $f\in H^s(\mathbb{R}^2)$  with $s > 1/3$ and this result is sharp up to the endpoint in $\mathbb{R}^{2}$.
 For related results on restriction and decoupling,
see Du--Li \cite{DL2019} and Li \cite{L2023}.

\begin{theoremA}\cite[Theorem 1.2]{DGL2017}\label{Th-2}
For any $s > 1/3$, the following bound holds: for any function $f\in H^s(\mathbb{R}^2)$,
\begin{equation}\label{eq:1.2}
\bigl\|\sup_{0<t\leq 1}\bigl|e^{it\Delta}f\bigr|\bigr\|_{L^3(B^{2}(0,1))}
 \lesssim_{s} \|f\|_{H^s(\mathbb{R}^2)}.
\end{equation}
\end{theoremA}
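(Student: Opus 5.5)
The plan is to follow the strategy of Du--Guth--Li. It rests on a Littlewood--Paley reduction, parabolic rescaling, and a weighted (fractal-type) $L^3$ estimate for the extension operator, proved by a broad--narrow argument combined with a refined Strichartz estimate.

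\textbf{Step 1: reductions.} By Littlewood--Paley decomposition and the triangle inequality it suffices to treat $\widehat f$ supported in the annulus $\{|\xi|\sim R\}$. For such $f$ the goal is
$$\bigl\|\sup_{0<t\le 1}|e^{it\Delta}f|\bigr\|_{L^3(B^2(0,1))}\lesssim_\epsilon R^{1/3+\epsilon}\|f\|_{L^2}.$$
Summing over dyadic $R$ then gives the claim for every $s>1/3$.

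Next rescale by $x\mapsto Rx$ and $t\mapsto R^2t$, writing $Ef$ for the extension operator of the paraboloid applied to $f$ supported in the unit ball. The target becomes
$$\bigl\|\sup_{0<t\le R}|Ef(x,t)|\bigr\|_{L^3(B_R)}\lesssim R^{1/3+\epsilon}\|f\|_2.$$
The locally constant property on unit scale lets us pick, for each lattice unit square in $B_R$, a unit space-time cube $Q$ above it where the supremum is essentially attained. The estimate is thus reduced to the following: if $X$ is a union of lattice unit cubes in $B_R\times[0,R]$ with at most one cube in each vertical column, then
\begin{equation*}
\|Ef\|_{L^3(X)}\lesssim_\epsilon R^{1/3+\epsilon}\|f\|_2 .
\end{equation*}
A further dyadic pigeonholing lets us assume that the cubes in $X$ are organized into lateral $R^{1/2}$-balls, each containing a comparable number of cubes.

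\textbf{Step 2: broad--narrow decomposition.} Split the frequency ball into caps of size $K^{-1}$ and run the Bourgain--Guth argument. At each point, either one cap (or a few caps near a line) dominates (\emph{narrow}), or two transverse caps contribute comparably (\emph{broad}).

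The narrow part is handled by induction on $R$. After parabolic rescaling of a $K^{-1}$-cap, the "one cube per column" structure is preserved at scale $R/K^2$. The induction hypothesis then gives a factor $(R/K^2)^{1/3+\epsilon}$, and the gain $K^{-2\epsilon}$ beats the $O(K^{O(1)})$-free combinatorial constant. This uses $\ell^2$ decoupling in the narrow direction so that only $\ell^2$ sums of caps appear.

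\textbf{Step 3: the broad part, the main obstacle.} Here the key input is a bilinear refined Strichartz estimate in $\mathbb R^{2+1}$. Let $f_1,f_2$ be transversely supported and concentrated on wave packets through a set of $R^{1/2}$-tubes, and suppose each lateral $R^{1/2}$-square is met by only $\lesssim M$ of the relevant cubes. Then
$$\bigl\||Ef_1Ef_2|^{1/2}\bigr\|_{L^3(X)}\lesssim R^{\epsilon}\Bigl(\tfrac{M}{R^{1/2}}\Bigr)^{1/6}\|f_1\|_2^{1/2}\|f_2\|_2^{1/2}\cdot R^{1/3}.$$

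To prove this, first establish the linear refined Strichartz estimate. This comes from $\ell^2$ decoupling on $R^{1/2}$-balls, followed by induction on scales that counts how many $R^{1/2}$-tubes pass through each ball; the counting is done by dyadic pigeonholing on the number of wave packets per tube. Then combine the linear estimate with bilinear $L^2$ transversality via Hölder's inequality.

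The delicate point is matching the density parameter $M$ with the pigeonholed structure of $X$. Because $X$ has one cube per column, $M\lesssim R^{1/2}$ lateral count per square, and the number of squares supplies the remaining factor. Interpolating these two bounds gives exactly $R^{1/3}$. Inserting the bilinear bound into the broad part and summing over $O(K^2)$ pairs of caps closes the induction, which completes the proof of \eqref{eq:1.2}.
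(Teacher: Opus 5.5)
There is a genuine gap, and it is already in Step~1. The paper does not prove this statement; it quotes it from Du--Guth--Li and records the rescaled, frequency-localized form as Lemma~\ref{le--2}.

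\textbf{The rescaled target is wrong.} Put $g=f(\cdot/R)$. Then $e^{it\Delta}f(x)=e^{iR^2t\Delta}g(Rx)$, the spatial $L^3$ norm contributes a factor $R^{-2/3}$, and $\|f\|_2=R^{-1}\|g\|_2$. These two factors exactly absorb the $R^{1/3}$, so what you must prove is
\[
\Bigl\|\sup_{0<t\leq R}|e^{it\Delta}g|\Bigr\|_{L^3(B^2(0,R))}\lesssim_\varepsilon R^{\varepsilon}\|g\|_2,\qquad \operatorname{supp}\mathcal{F}_x g\subset\{|\xi|\approx 1\}.
\]
Equivalently, you need $\|e^{it\Delta}g\|_{L^3(X)}\lesssim R^{\varepsilon}\|g\|_2$ for one-cube-per-column $X$, not $R^{1/3+\varepsilon}\|g\|_2$.

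The bound you set out to prove is trivial. Local constancy on unit time intervals plus Plancherel gives $\|\sup_{0<t\le R}|e^{it\Delta}g|\|_{L^2(B^2(0,R))}\lesssim R^{1/2}\|g\|_2$, as in the proof of Lemma~\ref{Le-3}. Also $\|e^{it\Delta}g\|_{L^\infty}\lesssim\|g\|_2$. H\"older then gives $R^{1/3}\|g\|_2$ with no $\varepsilon$. Undoing the scaling, a bound $R^{\beta}$ at this level yields $R^{1/3+\beta}$ on $B^2(0,1)$, so your target only corresponds to $s>2/3$.

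Separately, $t\mapsto R^2t$ sends $[0,1]$ to $[0,R^2]$, not $[0,R]$. Shortening the interval needs Lee's time-localization lemma, which you do not mention.

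\textbf{Steps 2--3 are calibrated to the trivial exponent.} In the one-cube-per-column regime your bilinear inequality gives at best $R^{1/3+\varepsilon}$. Your closing remark that interpolation ``gives exactly $R^{1/3}$'' confirms that the induction closes only at the trivial level. The whole difficulty is to remove this full power of $R$, and the proposal contains no computation that does so.

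In Du--Guth--Li this is the role of the $L^6$ refined Strichartz estimates, in linear and bilinear form. They apply on a union $Y$ of $R^{1/2}$-cubes with essentially constant $L^6$ norms, and gain in the number of such cubes per horizontal $R^{1/2}$-strip. One then applies H\"older from $L^6$ to $L^3$ on $X\cap Y$, and counts using the column structure so that every positive power of $R$ cancels.

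\textbf{The narrow step is wrong as stated.} Parabolic rescaling of a $K^{-1}$-cap centred at $\xi_0\neq0$ involves a Galilean shear and an anisotropic dilation. Vertical columns become slanted and unit cubes become $K^{-1}\times K^{-1}\times K^{-2}$ slabs, so the one-cube-per-column hypothesis is not inherited at scale $R/K^2$. The induction has to be run on a statement whose hypothesis survives this change of variables. A ball-counting density condition of the type in Lemma~\ref{le--1} would do; one cube per column is then the case $\alpha=2$, $\gamma\lesssim 1$.
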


 For $n\geq3$, Bourgain \cite{B2013} obtained $s>1/2-1/(4n)$ by multilinear restriction estimates and the methods of Bourgain--Guth \cite{BG2011}.
In 2019, Du--Zhang \cite{DZ2019} proved the sharp local $L^2$ estimate for the Schr\"odinger maximal function for $s>n/(2(n+1))$, $n\geq3$, up to the endpoint.
Du--Guth--Li--Zhang \cite{DGLZ2018} improved the known bounds for the Hausdorff dimension of divergence sets. In particular, they proved almost everywhere convergence for $s>(n+1)/(2(n+2))$ when $n\geq3$. Li--Miao--Yu \cite{LMY2024} established the sharp global $L_x^pL_t^q$ estimates in two dimensions on the critical line $3/p+1/q=1$, for $3<p\leq6$ and $2\leq q<\infty$, with initial data in $\dot H^{1-2/p-2/q}$.

 For $1\leq r<\infty$, finite $r$-variation implies the existence of one-sided limits. In particular, for a complex-valued function $F$ on $(0,1]$,
$$V^r_{(0,1]}F<\infty\quad\Longrightarrow\quad\lim_{t\to0^+}F(t)\ \text{exists};$$ 
Bourgain\cite{B1989} proved in ergodic theory the first
variational estimate which had originated from the regularity of Brownian motion in martingale theory \cite{L1976, PX1988} since the pointwise convergence may not hold for any non-trivial dense subclass of functions in some ergodic models and thus the maximal Banach principle does not work, see \cite{B1988a,B1988b} for further results in this direction.
 
On the other hand, numerous maximal inequalities in harmonic analysis—including those for maximal singular integrals, Radon-type maximal operators, the Carleson–Hunt theorem, Cauchy-type integrals, and dimension-free Hardy–Littlewood maximal estimates—have been strengthened to variational inequalities. Mirek–Stein–Trojan \cite{MST2017} established variational estimates for discrete Radon operators and transferred these results to polynomial ergodic averages and truncated ergodic singular integrals. For Fourier partial sums, Oberlin–Seeger–Tao–Thiele–Wright \cite{OSTTW2012} proved a variational Carleson theorem with a sharp exponent range. Further developments include dimension-free variational estimates for averaging operators \cite{BMSTW2018}, bounds for spherical averages \cite{BORS2022}, and variational Fourier restriction estimates \cite{K2019}. Variational estimates also admit geometric applications: Mas–Tolsa \cite{MT2014} characterized the uniform rectifiability of Ahlfors–David regular measures via the $L^2$ boundedness of the r-variation of the Riesz transform for $r>2$.  A large body of additional work on singular integrals and averaging operators further enriches the literature on this topic; see \cite{CJRW2000,CJRW2003,CDH2021,CDHL2018,CHL2023,JSW2008,MSZ2020b}.

We sketch the proof of  Theorem \ref{Th-1}.
\begin{enumerate} \item 
Littlewood--Paley decomposition and parabolic scaling
reduce the problem to frequency-localized estimates on
$B^2(0,R)\times[0,R]$, with
$$\operatorname{supp}\mathcal{F}_x(f)\subset\{\xi\in\mathbb{R}^2:1/2\leq|\xi|\leq2\}.$$
We divide this region into unit space--time cubes$Q_j\times I_m$, where $I_m=[m,m+1]$.The spatial $L^p$ norm of the variation is controlled  by the $\ell_j^p(\ell_m^r)$ norms of the local suprema of the solution and its time derivative.A space--time reproducing formula reduces these bounds to estimates for the local $L^2$ quantities
$$f_{j,m}:=\left(\int_{Q_j\times I_m}|e^{it\Delta}f(x)|^2 dxdt\right)^{1/2},$$
taken over a fixed enlargement of the region.

\item
For each spatial cube $Q_j$, we arrange the sequence $\{f_{j,m}\}_m$ in decreasing order. The rearranged sequence relates the size of the local contributions to the number of time indices at which they exceed a given level. For dyadic $\lambda$ and $K$, let
$$E_{\lambda,K}:=\left\{Q_j:\#\{m:f_{j,m}\geq\lambda\}\approx K\right\},$$
and let $X_{\lambda,K}$ be the union of the cubes $Q_j\times I_m$ with $Q_j\in E_{\lambda,K}$ and $f_{j,m}\geq\lambda$. Thus there are approximately $K$ selected space--time cubes above each $Q_j\in E_{\lambda,K}$, as illustrated
in Figure \ref{fig:X-lambda-K}.
\begin{figure}[H]
	\centering
	\includegraphics[width=0.75\textwidth]{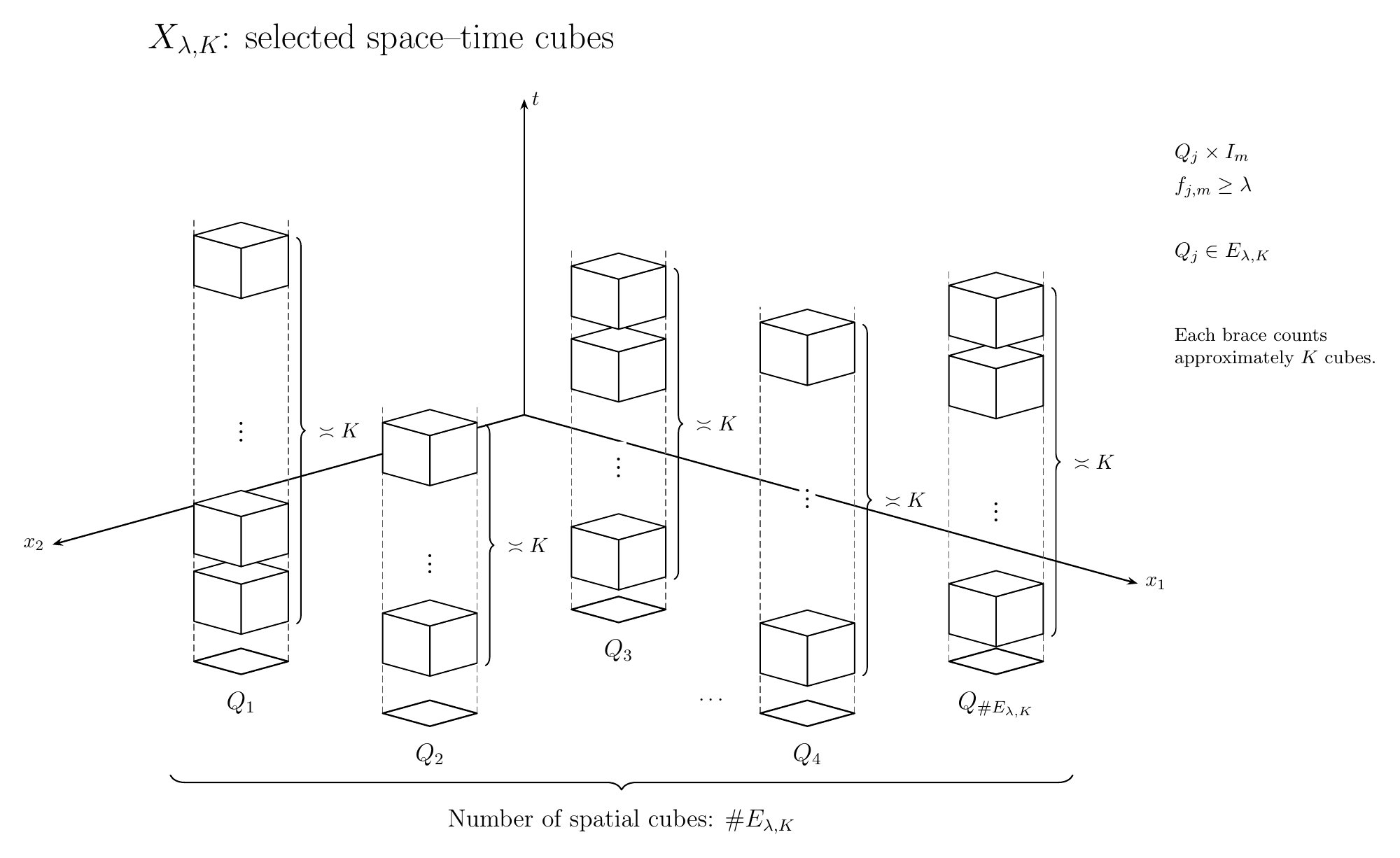}
	\caption{Schematic representation of $X_{\lambda,K}$.
		Above each $Q_j\in E_{\lambda,K}$ there are
		approximately $K$ selected space--time cubes
		$Q_j\times I_m$ satisfying $f_{j,m}\geq\lambda$.}
	\label{fig:X-lambda-K}
\end{figure}
Since each spatial unit cube $Q_j$ has at most $O(K)$ selected space--time cubes above it, we obtain the required geometric density bound for $X_{\lambda,K}$. Combining the fractal $L^2$ estimate of Du--Zhang \cite{DZ2019} with the maximal estimate of Du--Guth--Li \cite{DGL2017} gives the dependence $K^{1/q_p}$ needed to control the weak $\ell^{q_p}$ norm of the rearranged sequence. Summation over the dyadic levels and the rearrangement indices yields the strong $\ell^{q_p}$ bound with logarithmic losses, which are absorbed into $R^\varepsilon$. This proves the local estimate at $r=q_p$. Monotonicity then gives the estimates for $q_p<r<\infty$, with the same Sobolev regularity threshold as the global maximal estimate.

\item
For $2\leq p<3$ and $2<r<q_p$, interpolation with the $V^2$ estimate gives the threshold $1/p+1/r$. For $1\leq r\leq2$, the square-function estimates,
interpolation between the spatial $L^2$ and $L^6$ bounds, and Bernstein's inequality give the two regularity branches separated by $p=6$. In the range $3<p<6$ and $r>2$, we use local smoothing, square-function, and mixed space--time estimates, together with a variation inequality and interpolation; see \cite{LRS2012,LRS2013,LMY2024,R2008,RS2010} for the underlying estimates. The cases $p=3$ and $p\geq6$ follow from the corresponding maximal and square-function bounds. Spatial localization and Littlewood--Paley summation
then give the global Sobolev estimates.

\item For the necessary conditions, Bourgain's example gives the first branch of
$m(p)$ \cite{B2016}; the second follows from a scaling example of the type
used in space--time necessary conditions \cite[Section 3]{LRS2013}.  The remaining necessary conditions do not follow directly from the classical counterexamples for the Schr\"odinger maximal operator.We adapt the space--time constructions of Lee--Rogers--Seeger \cite{LRS2013} and the variational example of Guo--Roos--Yung \cite{GRY2020} by modifying
the frequency localization, spatial regions, and choice of times to obtain the necessary lower bounds on Sobolev regularity for the Schr\"odinger variation estimates.  These constructions yield the lower bounds
$$ m(p),\qquad \frac{1}{p}+\frac{1}{r},\qquad
\frac{2}{r}+\frac{1}{p}-\frac{1}{2},\qquad \frac2r-\frac{2}{p}, $$
whose maximum matches the regularity threshold in Theorem \ref{Th-1}.
\end{enumerate}

\noindent\textit{Ogranzation.} The paper is organized as follows:  In Section 2, we  give the elementary properties of variation and the space--time localization estimates of maximal Schr\"odinger operator. In Section 3, we establish the frequency-localized bounds in distinct  $(p,r)$ parameter ranges.  In Section 4  we establish the local-to-global principle and
derive the  estimates in Theorem \ref{Th-1}. In Section 5, we construct counterexamples and establish the necessary conditions for our main result.


\noindent\textit{Notation.} Throughout this paper, \begin{enumerate} \item \(A\lesssim B\) means that $A\leq CB$ for some
constant $C>0$, whose dependence on parameters is indicated by subscripts
when necessary. We write $A\approx B$ if $A\lesssim B$ and $B\lesssim A$. \item For a set $E$, $\# E$ denotes its cardinality. 
\item $\mathcal{S}$ denotes the Schwartz space. We denote by $C_c^\infty(\mathbb{R}^{n})$ the space of infinitely differentiable functions
with compact support in $\mathbb{R}^{n}$. \item $B^n(0,R)$ denotes the ball in $\mathbb{R}^n$ centered at the origin with radius $R$.\item We use $L_x^p$ and $L_t^r$ to denote the $L^p$ and $L^r$ norms with respect to the spatial variable $x$ and the time variable $t$, respectively,
for $1 \leq p,r \leq \infty$. 
\item The Fourier transforms with respect to the spatial,
temporal, and space-time variables are defined, respectively, by
$$ \left\{
\begin{aligned}
		&\mathcal{F}_{x,t}F(\xi,\tau)
	:= \int_{\mathbb{R}^{n+1}}
	e^{-i(x\cdot\xi+t\tau)}F(x,t)dxdt;\\
	&\mathcal{F}_x F(\xi,t)
	:= \int_{\mathbb{R}^n}
	e^{-ix\cdot\xi}F(x,t)dx;\\
	&\mathcal{F}_t F(x,\tau)
	:= \int_{\mathbb{R}}
	e^{-it\tau}F(x,t)dt.
	\end{aligned}
\right.
$$
\end{enumerate}
	\section{Preliminaries}

	\begin{lemma}\cite[Riesz--Thorin theorem]{BL1976}\label{le-8}
		Let $T$ be a bounded linear
		operator from $L^{p_{0}} (X, A,\mu)$ to $L^{q_{0}} (Y , B, \nu)$ with norm $M_{0}$ and from $L^{p_{1}} (X, A,\mu)$ to
		$L^{q_{1}} (Y , B, \nu)$ with norm $M_{1}$. Then, $T$ is bounded from $L^{p_{\theta}} (X, A,\mu)$ to $L^{q_{\theta}} (Y , B, \nu)$
		with norm $M_{\theta}$ such that
		$$ M_{\theta} \leq M_{0}^{1-\theta} M_{1}^{\theta} $$
		with $\theta\in(0,1)$  and
		$$
		\left\{\begin{array}{ll}
			\frac{1}{p_{\theta}}=\frac{1-\theta}{p_{0}}+\frac{\theta}{p_{1}} \\
			\frac{1}{q_{\theta}}=\frac{1-\theta}{q_{0}}+\frac{\theta}{q_{1}}
		\end{array}\right.
		$$
	\end{lemma}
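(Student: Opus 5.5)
The statement to prove is Lemma \ref{le-8}, the Riesz--Thorin interpolation theorem. I would follow the classical complex-interpolation argument via Hadamard's three lines lemma. I will treat complex-valued $L^p$ spaces and assume $p_\theta<\infty$, so that simple functions are dense in $L^{p_\theta}$. The plan is to prove the bound $\|Tf\|_{q_\theta}\le M_0^{1-\theta}M_1^\theta\|f\|_{p_\theta}$ first for simple $f$, and then extend it by density.

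\emph{Reduction.} By homogeneity, fix simple $f$ with $\|f\|_{p_\theta}=1$ and simple $g$ with $\|g\|_{q_\theta'}=1$; here $q_\theta'$ is the dual exponent, and if $q_\theta=1$ we take $g$ bounded. By duality it suffices to show
\[
\Bigl|\int_Y Tf\,g\,d\nu\Bigr|\le M_0^{1-\theta}M_1^\theta .
\]
Write $f=|f|e^{i\arg f}$ and $g=|g|e^{i\arg g}$. For $z$ in the closed strip $S=\{0\le \operatorname{Re} z\le 1\}$, define
\[
f_z=|f|^{p_\theta\left(\frac{1-z}{p_0}+\frac{z}{p_1}\right)}e^{i\arg f},\qquad
g_z=|g|^{q_\theta'\left(\frac{1-z}{q_0'}+\frac{z}{q_1'}\right)}e^{i\arg g},
\]
with the usual convention when an exponent is infinite. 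By construction $f_\theta=f$ and $g_\theta=g$. Set $\Phi(z)=\int_Y T(f_z)\,g_z\,d\nu$.

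\emph{Analyticity and boundary bounds.} Since $f_z$ and $g_z$ are finite sums of characteristic functions multiplied by exponentials $a^{z}$, linearity of $T$ makes $\Phi$ a finite linear combination of functions $e^{cz}$. Hence $\Phi$ is entire and bounded on $S$. On the line $\operatorname{Re} z=0$ we have $|f_{iy}|=|f|^{p_\theta/p_0}$, so $\|f_{iy}\|_{p_0}=1$, and similarly $\|g_{iy}\|_{q_0'}=1$. Hölder's inequality together with the $L^{p_0}\to L^{q_0}$ bound then gives $|\Phi(iy)|\le M_0$. In the same way, $|\Phi(1+iy)|\le M_1$.

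\emph{Conclusion.} The function $\Phi(z)M_0^{z-1}M_1^{-z}$ is bounded and analytic on $S$, and it is at most $1$ in modulus on both boundary lines. By the three lines lemma (the Phragmén--Lindelöf maximum principle on the strip), it is at most $1$ everywhere on $S$, and in particular $|\Phi(\theta)|\le M_0^{1-\theta}M_1^\theta$. Taking the supremum over $g$ gives the estimate for simple $f$. For general $f\in L^{p_\theta}$, approximate by simple functions $f_n\to f$ in $L^{p_\theta}$, and also in $L^{p_0}+L^{p_1}$, where $T$ is defined. Passing to an a.e.\ convergent subsequence of $Tf_n$ and applying Fatou's lemma yields the bound for $f$.

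\emph{Main obstacle.} The only delicate points are the degenerate endpoint exponents, namely when some $p_j$ or $q_j$ equals $\infty$, or when $q_\theta=1$. These require choosing the test class of $g$ appropriately, and being careful that $T$ is defined on simple functions lying in both $L^{p_0}$ and $L^{p_1}$. The analytic core of the argument is standard.
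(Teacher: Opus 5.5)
Your proposal is correct and is essentially the standard three-lines-lemma argument from Bergh--L\"ofstr\"om (Theorem 1.1.1), which the paper cites without reproducing. The cases you leave implicit are routine. When $p_\theta=\infty$, both $p_0=p_1=\infty$, and the bound follows directly from $\|Tf\|_{q_\theta}\le\|Tf\|_{q_0}^{1-\theta}\|Tf\|_{q_1}^{\theta}$. When $M_0M_1=0$, replace $M_j$ by $M_j+\epsilon$ and let $\epsilon\to0$.
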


	\begin{lemma}\cite[Corollary 1.7]{DZ2019} \label{le--1}
		Let $n\geq1$ and \(B_l\) is a unit cubes centered at $x_l\in\mathbb{R}^{n+1}$. For any $\varepsilon>0$, there exists a constant $C_{\varepsilon}$
		such that the following holds for all $R \geq 1$ and all $f$ with $supp \mathcal{F}_x(f) \subset B^{n}(0,1)$.
		Suppose that $X=\bigcup_{l}B_{l}$ is a union of lattice unit cubes in $B^{n+1}(0,R)$. Let
		$1 \leq \alpha\leq n+1$ and $\gamma$ be given by
		\begin{eqnarray*}
			\gamma:= \sup_{B^{n+1}(x',r)\subset B^{n+1}(0,R)} \frac{\#\{ B_{l}\in X:B_{l}\subset B(x',r)\}}{r^{\alpha}},
		\end{eqnarray*}
		where $x'\in \mathbb{R}^{n+1}$ and $r\geq 1$. Then
		\begin{eqnarray*}
			\|e^{it\Delta}f\|_{L^{2}(X)}\lesssim  \gamma^{\frac{1}{n+1}}R^{\frac{\alpha}{2(n+1)}+\varepsilon}\|f\|_{2}.
		\end{eqnarray*}
	\end{lemma}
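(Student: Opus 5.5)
The plan is to deduce the estimate from a sharp fractal $L^{p}$ bound at the exponent $p_n:=\frac{2(n+1)}{n}$, by Hölder's inequality on $X$. We may assume $X\subset B^{n+1}(0,R)$ with $\gamma$ as in the statement. Taking the ball $B(0,R)$ in the definition of $\gamma$ gives $\#\{B_l\subset X\}\lesssim \gamma R^{\alpha}$, so $|X|\lesssim \gamma R^{\alpha}$. Since $\frac12-\frac1{p_n}=\frac{1}{2(n+1)}$, Hölder gives
$$
\|e^{it\Delta}f\|_{L^2(X)}\le |X|^{\frac{1}{2(n+1)}}\,\|e^{it\Delta}f\|_{L^{p_n}(X)}\lesssim \gamma^{\frac{1}{2(n+1)}}R^{\frac{\alpha}{2(n+1)}}\,\|e^{it\Delta}f\|_{L^{p_n}(X)} .
$$
It therefore suffices to prove the fractal $L^{p_n}$ estimate
$$
\|e^{it\Delta}f\|_{L^{p_n}(X)}\lesssim_\varepsilon \gamma^{\frac{1}{2(n+1)}}R^{\varepsilon}\|f\|_2 ,\qquad \operatorname{supp}\mathcal{F}_x f\subset B^n(0,1).
$$
Combining the two displays then produces exactly $\gamma^{\frac1{n+1}}R^{\frac{\alpha}{2(n+1)}+\varepsilon}$. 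This is the form of the Du--Zhang main theorem \cite{DZ2019}; the corollary follows from it as above.

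To prove the $L^{p_n}$ bound I would follow the broad--narrow scheme with induction on the scale $R$. Decompose $f=\sum_\tau f_\tau$ into frequency caps $\tau$ of radius $K^{-1}$, where $K=R^{\delta}$. On each lattice cube $B_l\subset X$, either a single cap dominates or $e^{it\Delta}f$ is $(n+1)$-linearly transversal ("broad").

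\emph{Narrow part.} The contribution concentrates on caps lying near a lower-dimensional subspace. Apply parabolic rescaling to each $K^{-1}$-cap: this maps $B_R$ to a region of scale $R/K^{2}$ and maps $X$ to a union of tubes. Then check that the rescaled set has a density constant at most $\gamma$, up to $K^{O(1)}$ factors that are harmless. Use the induction hypothesis at scale $R/K^2$ and sum over caps by $\ell^2$ orthogonality, using $p_n\ge2$. The $K^{O(\delta)}$ losses are absorbed because the induction hypothesis gains a factor $(R/K^2)^{\varepsilon}$.

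\emph{Broad part.} Here I would use the multilinear refined Strichartz estimate of Du--Guth--Li--Zhang. Decompose each $f_\tau$ into wave packets on tubes of dimensions $R^{1/2}\times\cdots\times R$. Pigeonhole so that the relevant tubes carry comparable mass and each lattice cube of $X$ meets comparably many of them. The refined estimate bounds the $(n+1)$-linear $L^{p_n}$ norm on an $R^{1/2}$-cube $Q$ by $\bigl(\#\{\text{tubes through }Q\}/\#\{\text{tubes}\}\bigr)^{\frac{1}{2(n+1)}}$ times $\prod\|f_j\|_2^{1/(n+1)}$. The density hypothesis (take $r=R^{1/2}$ and the tube neighbourhoods in the definition of $\gamma$) then converts the count of cubes of $X$ met by the tubes into the factor $\gamma^{\frac1{2(n+1)}}$. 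Finally sum over $Q$, dyadic pigeonholing parameters, and caps, at a cost of $R^{\varepsilon}$.

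The hard step is the broad estimate: one must show that the refined multilinear Strichartz gain really produces $\gamma^{1/(2(n+1))}$ with no power of $R$, uniformly over all $\alpha\in[1,n+1]$. This needs careful pigeonholing of the wave packets so that the lattice cubes of $X$ are uniformly distributed along the tubes. I would first try to reproduce the Du--Zhang induction with exactly these dyadic parameters. The narrow/rescaling step is the other delicate point, because $\gamma$ must be preserved under the anisotropic rescaling. I expect that this is handled by the supremum over all balls $B(x',r)$ with $r\ge1$ in the definition of $\gamma$.
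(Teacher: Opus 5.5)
The paper gives no argument for this lemma; it is quoted from \cite{DZ2019}. Your reconstruction breaks at its first step. The intermediate estimate $\|e^{it\Delta}f\|_{L^{p_n}(X)}\lesssim\gamma^{\frac{1}{2(n+1)}}R^{\varepsilon}\|f\|_2$ is false for every $\alpha\in(1,n+1]$. So it is not the Du--Zhang theorem, and no broad--narrow induction can prove it.

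To see this, take the Knapp example $\mathcal{F}_x f=R^{n/4}\chi_{B(\xi_0,cR^{-1/2})}$ with $|\xi_0|\le\frac12$ and $c>0$ small. Then $\|f\|_2\approx1$ and $|e^{it\Delta}f|\approx R^{-n/4}$ on the tube $T=\{(x,t):0\le t\le cR,\ |x-2t\xi_0|\le cR^{1/2}\}\subset B^{n+1}(0,R)$. Let $X$ be the union of the lattice unit cubes contained in $T$. A ball of radius $r$ contains $\lesssim r^{n+1}$ of these cubes when $r\le R^{1/2}$, and $\lesssim R^{n/2}r$ when $r\ge R^{1/2}$. Hence for $\alpha\ge1$ we get $\gamma\approx R^{\frac{n+1-\alpha}{2}}$, with the supremum attained at $r\approx R^{1/2}$. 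On the other hand, $\|e^{it\Delta}f\|_{L^{p_n}(X)}\gtrsim R^{-\frac n4}|T|^{\frac{n}{2(n+1)}}\approx R^{\frac{n}{4(n+1)}}$. This exceeds $\gamma^{\frac{1}{2(n+1)}}R^{\varepsilon}$ by the factor $R^{\frac{\alpha-1}{4(n+1)}-\varepsilon}$.

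The case $\alpha=n+1$ is especially clear. Take $X$ to be all unit cubes of $B^{n+1}(0,R)$, so $\gamma\approx1$. Your claim would then give $\|e^{it\Delta}f\|_{L^{2(n+1)/n}(B^{n+1}(0,R))}\lesssim R^{\varepsilon}\|f\|_2$, which the same $f$ refutes.

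The lemma itself is sharp on this example: $\|e^{it\Delta}f\|_{L^2(X)}\approx R^{1/2}=\gamma^{\frac{1}{n+1}}R^{\frac{\alpha}{2(n+1)}}$. Hölder on $X$ is lossless here, because $|e^{it\Delta}f|$ is essentially constant on $T$. The entire loss is in your bound $|X|\lesssim\gamma R^{\alpha}\approx R^{\frac{n+1+\alpha}{2}}$, whereas in fact $|X|\approx R^{\frac{n+2}{2}}$.

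Keeping one more parameter does not repair the scheme. About $R^{(\alpha-1)/2}$ well-separated parallel translates of $T$ have the same $\gamma$ and the same number $\lambda$ of cubes per $R^{1/2}$-cube, yet their union has $|X|\approx\gamma R^{\alpha}$. Any $L^{p_n}$ bound in terms of $(\gamma,\lambda,R)$ must be $\gtrsim R^{\frac{n}{4(n+1)}}$ because of the single tube $T$. It therefore still loses $R^{\frac{\alpha-1}{4(n+1)}}$ after Hölder over all of $X$.

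This is why Du--Zhang induct directly on an $L^2$ quantity. Their main inductive theorem treats $X$ a union of lattice $K^2$-cubes, $K=R^{\delta}$, with $\sim\lambda$ of them in each $R^{1/2}$-cube meeting $X$. It gives $\|e^{it\Delta}f\|_{L^2(X)}\lesssim\gamma^{\frac{2}{(n+1)(n+2)}}\lambda^{\frac{n}{(n+1)(n+2)}}R^{\frac{\alpha}{(n+1)(n+2)}+\varepsilon}\|f\|_2$. The stated corollary follows from $\lambda\lesssim\gamma R^{\alpha/2}$. The exponent $\frac{2(n+1)}{n}$ enters only inside the broad part, through multilinear refined Strichartz estimates.

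Your induction sketch also has two further problems that would surface anyway:
\begin{itemize}
\item In $L^{p_n}$ with $p_n>2$, the narrow caps cannot be summed by $\ell^2$ orthogonality; that step needs $\ell^2$ decoupling.
\item Losses of size $K^{O(1)}=R^{O(\delta)}$ are not harmless, because the induction gains only $(K^{2})^{-\varepsilon}$. The powers of $K$ produced by rescaling the density parameters must cancel exactly.
\end{itemize}
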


	\begin{lemma}\cite[Theorem 1.3]{DGL2017}\label{le--2}
		For $\varepsilon>0$, there exists a constant $C_{\varepsilon}$ such that
		$$\left\| \sup_{0< t\leq R}|e^{it\Delta}f(x)|\right\| _{L^{3}(B^{2}(0,R))}\lesssim R^{\varepsilon}\|f\|_{2}.$$
		holds for all $R\geq 1$ and $supp \mathcal{F}_x(f) \subset \{\xi\in \mathbb{R}^{2}:|\xi|\approx 1\}$.
	\end{lemma}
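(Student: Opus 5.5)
This is \cite[Theorem 1.3]{DGL2017}, and in the paper we invoke it rather than reprove it. If one wanted a self-contained argument, I would follow the Du--Guth--Li scheme as follows.

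\textbf{Reduction to a problem on unit cubes.} Write $Ef(x,t)=e^{it\Delta}f(x)$ with $\operatorname{supp}\mathcal{F}_x(f)\subset\{|\xi|\approx1\}$. By the uncertainty principle, $|Ef|$ is essentially constant on unit space--time cubes. Hence $\sup_{0<t\le R}|Ef(x,t)|$ on a unit spatial cube $Q_j\subset B^2(0,R)$ is controlled by $|Ef|$ (suitably averaged) on one cube $Q_j\times I_{m(j)}$, where $m(j)$ is chosen to maximize. This reduces the lemma to the following statement: for any collection $X$ of lattice unit cubes in $B^3(0,R)$ containing \emph{at most one cube in each vertical column},
$$\|Ef\|_{L^3(X)}\lesssim R^{\varepsilon}\|f\|_2 .$$
Since the estimate is scale-invariant in $\|f\|_2$, a dyadic pigeonholing then lets us assume that all cubes of $X$ carry comparable mass $\|Ef\|_{L^3(Q)}$, at the cost of only logarithmic losses.

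\textbf{Broad--narrow induction on scales.} Decompose the frequency annulus into $K^{-1}$-caps $\tau$ and split each cube $Q\in X$ into two types.
\begin{enumerate}
\item \emph{Narrow cubes.} On these, $Ef$ is dominated by the contributions from $O(1)$ caps or from caps lying near a line. For these we apply parabolic rescaling, which sends a $K^{-1}$-cap at scale $R$ to the full annulus at scale $R/K^2$. The "one cube per column" condition is preserved up to tilting, which is handled by a Galilean/Lorentz change of variables. The induction hypothesis at scale $R/K^2$ then closes, provided $K$ is a large power of $R^{\varepsilon}$, because there are few relevant caps.
\item \emph{Broad cubes.} Here two transverse caps $\tau_1,\tau_2$ both contribute significantly, so $|Ef|^3\lesssim K^{O(1)}|Ef_{\tau_1}Ef_{\tau_2}|^{3/2}$.
\end{enumerate}

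\textbf{Broad part: the main obstacle.} The broad part is where the real work lies; I would try the following route. Decompose $f$ into wave packets at scale $R$, i.e.\ tubes of dimensions $R^{1/2}\times R^{1/2}\times R$. Then prove a \emph{refined Strichartz estimate}. Suppose the tubes pass through a set of $R^{1/2}$-lattice cubes $Q$, each meeting about $M$ cubes of $X$, and each $R^{1/2}$-cube lies in at most $N$ distinct tubes per direction. The estimate should show that the $L^6$ (or bilinear $L^3$) norm over $\bigcup Q$ gains a factor $(\text{density})^{1/6}$ compared with the plain Strichartz estimate.

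The proof of this refined estimate would come from $\ell^2$ decoupling at each $R^{1/2}$-cube, combined with induction on scales in the intermediate scale $R^{1/2}$, using parabolic rescaling once more. The bilinear version uses transversality, namely a bilinear $L^2$ estimate on $R^{1/2}$-cubes.

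The geometric input is the observation that a single tube meets at most $R^{1/2}$ unit cubes of $X$, by the one-per-column condition, while $X$ has at most $R^2$ cubes in total. Feeding these counts into the refined bilinear estimate and using H\"older's inequality yields exactly the $L^3$ exponent, with only $R^{\varepsilon}$ losses. This is the same mechanism that Lemma \ref{le--1} expresses through the density $\gamma$. The delicate point is the bookkeeping of the pigeonholed parameters (tube multiplicity, cube mass, number of cubes per tube), so that no power of $R$ is lost; that is the step I expect to be hardest.
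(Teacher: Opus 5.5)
Your proposal agrees with the paper: the lemma is exactly \cite[Theorem 1.3]{DGL2017}, and the paper also just quotes it without proof, so the citation is the entire argument. The optional self-contained sketch should not be relied on as written, for three reasons: the Du--Guth--Li proof goes through polynomial partitioning (with refined Strichartz estimates obtained from decoupling); the one-cube-per-column condition is not preserved under parabolic rescaling of a $K^{-1}$-cap, because the Galilean shear turns vertical columns into lines of speed $\approx K$ at the new scale; and your count is off, since a wave packet tube of size $R^{1/2}\times R^{1/2}\times R$ with speed $\approx 1$ lies over about $R^{3/2}$ unit columns and can therefore contain about $R^{3/2}$, not $R^{1/2}$, cubes of $X$.
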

	\begin{lemma}\label{co--1}
		For $\varepsilon>0$, there exists a constant $C_{\varepsilon}$ such that
		$$\left\| \sup_{-4R\leq t\leq 4R}|e^{it\Delta}f(x)|\right\| _{L^{3}(B^{2}(0,4R))}\lesssim R^{\varepsilon}\|f\|_{2}.$$
		holds for all $R\geq 1$ and $supp \mathcal{F}_x(f) \subset \{\xi\in \mathbb{R}^{2}:|\xi|\approx 1\}$.
	\end{lemma}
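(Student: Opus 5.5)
The plan is to deduce Lemma \ref{co--1} from Lemma \ref{le--2} using only symmetries of the free Schr\"odinger flow: time translation, time reversal, and spatial translation. None of these changes $\|f\|_2$ or the frequency support condition $|\xi|\approx 1$. We first cover the space--time region $B^2(0,4R)\times[-4R,4R]$ by $O(1)$ pieces of the form $B^2(x_k,R)\times[t_k,t_k+R]$. The time interval $[-4R,4R]$ splits into eight intervals of length $R$, and the ball $B^2(0,4R)$ is covered by $O(1)$ balls of radius $R$ centered at points $x_k$. Since the sup over a union is bounded by the sum of sups and the number of pieces is $O(1)$, it suffices to bound each local piece by $C_\varepsilon R^{\varepsilon}\|f\|_2$:
\[
\Bigl\|\sup_{t_k< t\le t_k+R}|e^{it\Delta}f|\Bigr\|_{L^3(B^2(x_k,R))}.
\]

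For a fixed piece we move it to the model region of Lemma \ref{le--2}. For the time shift, set $g:=e^{it_k\Delta}f$. Then $e^{i(t_k+t)\Delta}f=e^{it\Delta}g$, $\|g\|_2=\|f\|_2$, and $\mathcal{F}_x g=e^{-it_k|\xi|^2}\mathcal{F}_x f$ has the same support. For the spatial shift, put $h:=g(\cdot+x_k)$. Then $e^{it\Delta}g(x+x_k)=e^{it\Delta}h(x)$, and modulation by $e^{ix_k\cdot\xi}$ again leaves the Fourier support and the $L^2$ norm unchanged. After these two changes the piece is exactly
\[
\Bigl\|\sup_{0<t\le R}|e^{it\Delta}h|\Bigr\|_{L^3(B^2(0,R))}\lesssim_\varepsilon R^{\varepsilon}\|h\|_2=R^\varepsilon\|f\|_2,
\]
by Lemma \ref{le--2}. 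For the endpoint $t=t_k$, or if one prefers closed intervals, we can either use continuity of $t\mapsto e^{it\Delta}f(x)$ for band-limited $L^2$ data, or simply overlap the intervals slightly.

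Negative times can also be handled by time reversal, which gives an alternative to shifting. Since $\overline{e^{-it\Delta}f}=e^{it\Delta}\bar f$, we have $\sup_{-R\le t<0}|e^{it\Delta}f|=\sup_{0<t\le R}|e^{it\Delta}\bar f|$. Moreover $\mathcal{F}_x\bar f(\xi)=\overline{\mathcal{F}_x f(-\xi)}$ is still supported in $|\xi|\approx1$, and $\|\bar f\|_2=\|f\|_2$. With either route, summing the $O(1)$ pieces gives the claim with constant $O(C_\varepsilon)$.

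No step is a genuine obstacle. The only point needing care is checking that each symmetry preserves the annular Fourier support hypothesis, and the modulation and reflection computations above confirm this.
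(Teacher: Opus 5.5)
Your proposal is correct and is exactly the change-of-variables reduction to Lemma \ref{le--2} that the paper invokes but omits; the time translation, spatial translation, time reversal and the endpoint-by-continuity step all check out. The covering is not even needed: replacing $f$ by $e^{-4iR\Delta}f$ turns $[-4R,4R]$ into $[0,8R]$, and since $B^2(0,4R)\subset B^2(0,8R)$, Lemma \ref{le--2} at scale $8R$ directly gives the bound with $(8R)^{\varepsilon}\lesssim R^{\varepsilon}$.
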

	\begin{proof}
		By a direct computation and a change of variables in Lemma \ref{le--2}, we can finish the proof. Therefore, we can omit the detail.
	\end{proof}

    \section{Local variational estimates for Schr\"odinger equation.}
    \subsection{The proof of the critical $r=\frac{3p}{3-p}$ with $2\leq p<3$}
	Let $Q_{j}\subset \mathbb{R}^{2}$ and $(x_{1,j},x_{2,j})\in \mathbb{Z}^{2}$ where $Q_{j}$  is a unit cube which is  centered at $(x_{1,j},x_{2,j})$ and $	I_{m}:=[m,m+1]$ with $m\in \mathbb{Z}$. Define 
    \begin{eqnarray}\label{eq-4}
 	f_{j,m}:=\left( \int_{B_{j,m}} |u(x,t)|^{2}dxdt\right)^{1/2}=\left( \int_{B_{j,m}} |e^{it\Delta} f(x)|^{2}dxdt\right)^{1/2}, 
    \end{eqnarray}
    where $B_{j,m}:=Q_{j}\times I_{m}$.
    \begin{lemma}\label{le--3}
    	Let $R,p,r\geq 1$ $f\in \mathcal{S}(\mathbb{R}^{2})$ and $supp\ \mathcal{F}_x(f) \subset\{ \xi\in \mathbb{R}^{2}: 1/2 \leq|\xi|\leq 2\}$. We have
	    \begin{eqnarray*}
	    \|V^r_{[0,R]}\{e^{it\Delta}f \}\|_{L^p(B^{2}(0,R))}\lesssim\left( \sum_{Q_{j} \cap B^{2}(0,4R)\neq \emptyset}\left( \sum_{I_{m}\cap[-4R,4R]\neq \emptyset} |f_{j,m}|^{r}\right)^{p/r} \right)^{1/p}+O(R^{-1000})\|f\|_{2}.
	    \end{eqnarray*}
    \end{lemma}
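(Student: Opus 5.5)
The plan is to discretize the variation in time at unit scale, bound each discrete block by local suprema, and then convert those local suprema into the local $L^2$ quantities $f_{j,m}$ through a reproducing formula that uses the frequency localization. Throughout write $u=e^{it\Delta}f$.

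\textbf{Step 1: reduce to unit time blocks.} Fix $x\in Q_j$ and a partition $0\le t_0<\dots<t_J\le R$. Each increment $|u(x,t_k)-u(x,t_{k-1})|$ can be split at the integer points between $t_{k-1}$ and $t_k$. Using Minkowski in $\ell^r$ and the fact that each $I_m$ carries at most two endpoints of a crossing increment, we get
$$V^r_{[0,R]}\{u(x,t)\}\lesssim \Bigl(\sum_{m}\bigl(V^1_{I_m}\{u(x,t)\}\bigr)^r\Bigr)^{1/r}.$$
This is consistent with $r\ge1$, because within one block $V^r\le V^1$. We also have $V^1_{I_m}\{u(x,\cdot)\}\le\int_{I_m}|\partial_tu(x,t)|\,dt\le \sup_{t\in I_m}|\partial_t u(x,t)|$. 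Since $\partial_t u=e^{it\Delta}(\Delta f)$ and $\Delta$ is a bounded multiplier on the annulus, it suffices to control
$$\sup_{(x,t)\in Q_j\times I_m}|e^{it\Delta}g(x)|$$
for $g=\Delta f$, which has the same Fourier support. Integrating $x$ over each $Q_j$ then bounds the left side by the $\ell^p_j(\ell^r_m)$ norm of these local suprema, with $j$ running over cubes meeting $B^2(0,R)$ and $m$ over $I_m\subset[0,R]$.

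\textbf{Step 2: replace local suprema by local $L^2$ norms (main step).} The function $u$ has space-time Fourier transform supported on the compact paraboloid piece $\{(\xi,-|\xi|^2):1/2\le|\xi|\le2\}$. Choose a Schwartz $\psi$ on $\mathbb R^3$ with $\widehat\psi=1$ on a neighborhood of that set. Then $u=u*\psi$, and for $(x,t)\in B_{j,m}$,
$$|u(x,t)|\le \sum_{j',m'}\int_{B_{j',m'}}|u(y,s)||\psi(x-y,t-s)|\,dy\,ds\lesssim_N \sum_{j',m'}(1+|j-j'|+|m-m'|)^{-N}f_{j',m'}.$$
The last inequality is Cauchy--Schwarz on each unit cube. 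The same bound holds for $\Delta f$, whose local $L^2$ norms are comparable after applying the reproducing formula once more; alternatively, one simply defines $f_{j,m}$ with $\Delta f$, which costs only a harmless constant. The kernel $(1+|j-j'|+|m-m'|)^{-N}$ is summable, so Young's inequality on $\ell^p(\ell^r)$ over the lattice bounds the near contributions by the $\ell^p_j(\ell^r_m)$ norm of $f_{j',m'}$ with $Q_{j'}\cap B^2(0,4R)\ne\emptyset$ and $I_{m'}\cap[-4R,4R]\ne\emptyset$.

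\textbf{Step 3: the far tail.} It remains to handle $(j',m')$ outside this enlarged region, where $|j-j'|+|m-m'|\gtrsim R$. I expect this to be the main (though routine) obstacle. For these terms use the crude bounds $f_{j',m'}\le \|u\|_{L^2(B_{j',m'})}$, $\|u(\cdot,s)\|_{L^2}=\|f\|_2$, and the decay $(R+|j-j'|+|m-m'|)^{-N}$. Summing in $m'$ against the unit time slabs gives a contribution $O(R^{-N'})\|f\|_2$ pointwise. After the $\ell^r_m$ norm (at most $O(R)$ terms) and the $L^p(B^2(0,R))$ norm (volume $R^2$), this is still $O(R^{-1000})\|f\|_2$ once $N$ is large. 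Care is needed only in organizing the sum so that Cauchy--Schwarz in $y$ gives $\|f\|_2$ rather than a global $L^2_{x,t}$ norm.
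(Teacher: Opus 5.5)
Your Steps 2 and 3 match the paper's argument: the reproducing formula with a Schwartz kernel whose space--time Fourier transform is $1$ near the paraboloid piece, Cauchy--Schwarz on unit cubes, translation/Young for the near part, and for the far part the crude bound $f_{j',m'}\le\|f\|_2$ (valid because $|I_{m'}|=1$) plus polynomial counting. The gap is in Step 1. The inequality
\[
V^r_{[0,R]}\{u(x,t)\}\lesssim\Bigl(\sum_m\bigl(V^1_{I_m}\{u(x,t)\}\bigr)^r\Bigr)^{1/r}
\]
is false for general functions of $t$ once $r>1$. For $u(t)=t$, the single increment from $0$ to $R$ gives $V^r_{[0,R]}\ge R$, while the right side is about $R^{1/r}$. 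Your justification (splitting at integers plus Minkowski) uses nothing about $u$ beyond its being a function of $t$, so it cannot be right as stated. Bounded overlap of the pieces is not the issue. The issue is that one increment with $t_k-t_{k-1}$ large is cut into about $t_k-t_{k-1}$ pieces, and $\bigl(\sum_m V^1_{I_m}\bigr)^r$ over those blocks is not controlled by $\sum_m\bigl(V^1_{I_m}\bigr)^r$; Minkowski only yields the $\ell^1$ sum. Hence discarding $|u|$ and keeping only $\sup|\partial_t u|$ is unjustified.

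The missing idea, which is how the paper proceeds, is to bound increments whose endpoints lie in different unit intervals by size rather than by variation:
\[
|u(x,t_k)-u(x,t_{k-1})|\le|u(x,t_k)|+|u(x,t_{k-1})|.
\]
Each $I_m$ contains at most two endpoints of such increments, so they contribute $\lesssim\sum_m\bigl(\sup_{t\in I_m}|u(x,t)|\bigr)^r$. Increments inside a single $I_m$ are bounded by $\bigl(\int_{I_m}|\partial_tu(x,t)|\,dt\bigr)^r$, exactly as you do. Together these give
\[
V^r_{[0,R]}\{u(x,t)\}\lesssim\Bigl(\sum_{I_m\cap[0,R]\neq\emptyset}\Bigl(\sup_{t\in I_m}\bigl(|u(x,t)|+|\partial_tu(x,t)|\bigr)\Bigr)^r\Bigr)^{1/r}.
\]
Your Step 2 then handles both terms directly. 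We have $u=u*\psi$ and $\partial_tu=u*\partial_t\psi$ with $\partial_t\psi$ still Schwartz, so both suprema over $B_{j,m}$ are bounded by $\sum_{j',m'}(1+|j-j'|+|m-m'|)^{-N}f_{j',m'}$, with $f_{j,m}$ built from $u$ itself. This also removes the detour through $\Delta f$. Note that redefining $f_{j,m}$ with $\Delta f$ in place of $f$ would prove a different inequality from the one stated.
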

    \begin{proof}
	    Set
    	$\mathscr{S}:=\bigl\{	j\in\{1,\ldots,J\}:	t_{j-1},t_j\in I_{m}\text{ for some } I_{m}\cap[0,R]\neq \emptyset\bigr\},
	    \mathscr{H}:=\{1,\ldots,J\}\setminus\mathscr{S}.$ For $\mathscr{S}$, we have 
        \begin{eqnarray*}
        \sum_{j\in\mathscr{S}}
        \bigl|e^{it_{j}\Delta} f(x)-e^{it_{j-1}\Delta}f(x)\bigr|^r&\leq&\sum_{I_{m}\cap [0,R]\neq \emptyset}  \left(  \int_{I_{m}}|\partial_{t} e^{it\Delta} f(x)|dt\right)^r\\
        &\lesssim&  \sum_{I_{m}\cap [0,R]\neq \emptyset}\left( \sup_{t\in I_{m}} |\partial_te^{it\Delta}f(x)|\right)^{r},
        \end{eqnarray*}
	    On the other hand, for $\mathscr{H}$,  we have
	    $$ |e^{it_{j+1}\Delta} f(x)-e^{it_{j}\Delta} f(x)|^{r}\lesssim |e^{it_{j+1}\Delta} f(x)|^{r}+|e^{it_{j}\Delta} f(x)|^{r}.$$
        Thus, we have 
        \begin{eqnarray*}
	    \sum_{j\in\mathscr{H}}
	    \bigl|e^{it_{j}\Delta} f(x)-e^{it_{j-1}}f(x)\bigr|^r    
	     &\lesssim&  \sum_{I_{m}\cap [0,R]\neq \emptyset}\left( \sup_{t\in I_{m}} |e^{it\Delta}f(x)|\right)^{r}.
        \end{eqnarray*}
	    Therefore, we can deduce
\begin{eqnarray}\label{eq-6}
    V_{[0,R]}^{r}\{e^{it\Delta}f(x)\} 
	\lesssim
	\left(\sum_{I_{m}\cap [0,R]\neq \emptyset}\left(\sup_{t\in I_{m}}\left(|e^{it\Delta}f(x)|+|\partial_t e^{it\Delta}f(x)|\right)\right)^r\right)^{1/r}.
\end{eqnarray}
        which implies 

        \begin{eqnarray*}
	    &&\|V^r_{[0,R]}\{e^{it\Delta}f \}\|_{L^p(B^{2}(0,R))}\\
	    &\lesssim&
	    \left[\sum_{\substack{Q_{j}\cap B^{2}(0,R)\neq \emptyset}}\int_{Q_j}
	    \left(\sum_m \sup_{t\in I_m}(|e^{it\Delta}f(x)|+|\partial_te^{it\Delta}f(x)|)^r\right)^{p/r}dx\right]^{1/p}\\
	    &\leq&
	    \left( \sum_{Q_{j} \cap B^{2}(0,R)\neq \emptyset }\left( \sum_{I_{m}\cap [0,R]\neq \emptyset}\bigg(\sup_{(x,t) \in B_{j,m}} \Big(|e^{it\Delta}f(x)|+|\partial_{t}e^{it\Delta}f(x)|\Big)\ \bigg)^{r}\right)^{p/r} \right)^{1/p}.
        \end{eqnarray*}

        Since the space--time Fourier transform of $e^{it\Delta}f$ is supported on the paraboloid $\tau=-|\xi|^2$, and $\operatorname{supp}\mathcal{F}_x(f)\subset\{\xi\in\mathbb{R}^2:1/2\leq|\xi|\leq2\}$, we have
        $$\operatorname{supp}\mathcal{F}_{x,t}(e^{it\Delta}f) \subset S
        :=\left\{(\xi,\tau)\in\mathbb{R}^2\times\mathbb{R}:
        \frac12\leq|\xi|\leq2,\ \tau=-|\xi|^2\right\}.$$
        Choose $\psi\in C_c^\infty$such that $\psi=1$ on an open neighbourhood of $S$, and set$\eta=\mathcal F_{x,t}^{-1}\psi,$ which indicates
    	\begin{eqnarray}\label{eq--1}
		\sup_{(x,t)\in B_{j,m}}|e^{it\Delta}f(x)|&\leq&\sum_{Q_{j'},I_{m'}}\sup_{(x,t)\in B_{j,m}}\left|\int_{B_{j',m'}}e^{is\Delta}f(w)\eta(x-w,t-s)dwds\right|\\
		&\leq&\sum_{Q_{j'},I_{m'}}\sup_{(x,t)\in B_{j,m}}\|\eta(x-\cdot,t-\cdot) \|_{L^{2}(B_{j',m'})} \|e^{it\Delta}f\|_{L^{2}(B_{j',m'})}.\notag
	\end{eqnarray}
	Define
	$$ \tilde{f}_{j,m}=
	\left\{
	\begin{array}{rcl}
		&&f_{j,m}, \quad B_{j,m}\cap B^{2}(0,4R)\times [-4R,4R]\neq \emptyset ;\\
		&&0, \quad\quad \text{others},
	\end{array}
	\right.
	$$
	where $f_{j,m}$ is given in (\ref{eq-4}). By \eqref{eq--1} and the  property of Schwartz function, 	for any $M>0$, we obtain
	\begin{eqnarray*}
		\sup_{(x,t)\in B_{j,m}} |e^{it\Delta} f(x)|&\lesssim& \sum_{Q_{j'},I_{m'}}\sup_{(x,t)\in B_{j,m}}\|\eta(x-\cdot,t-\cdot) \|_{L^{2}(B_{j',m'})} \|e^{it\Delta}f\|_{L^{2}(B_{j',m'})}\\
		&\lesssim&	 \sum_{Q_{j'},I_{m'}} (1+|(j'-j,m'-m)|)^{-M} {f}_{ j', m'}\\
		&\lesssim& \sum_{Q_{j'},I_{m'}} (1+|(j-j',m-m')|)^{-M}\tilde{f}_{j',m'}+A,
	\end{eqnarray*}
	where 
	$$A:=\sum_{Q_{j'},I_{m'}}(1+|(j-j',m-m')|)^{-M}	\bigl(f_{j',m'}-\tilde f_{j',m'}\bigr). $$ 
For the main term, we have
\begin{eqnarray}\label{eq-5}
	&&
	\bigg( \sum_{Q_{j}\cap B^{2}(0, R)\neq \emptyset}\bigg( \sum_{I_{m}\cap [0,R]\neq \emptyset}\bigg(\sum_{Q_{j'},I_{m'}} (1+|(j-j',m-m')|)^{-M}\tilde{f}_{j',m'}\bigg) ^{r}\bigg) ^{p/r}\bigg) ^{1/p}\\
	&=&\bigg( \sum_{Q_{j}\cap B^{2}(0, R)\neq \emptyset}\bigg( \sum_{I_{m}\cap [0,R]\neq \emptyset}\bigg( \sum_{Q_{j'},I_{m'}} (1+|(j',m')|)^{-M}\tilde{f}_{j-j',m-m'}\bigg) ^{r}\bigg) ^{p/r}\bigg) ^{1/p}\notag\\
	&\leq&\notag
	\sum_{Q_{j'},I_{m'}} (1+|(j',m')|)^{-M}\bigg( \sum_{Q_{j}\cap B^{2}(0, R)\neq\emptyset}\bigg(\sum_{I_{m}\cap[0,R]\neq\emptyset}|\tilde{f}_{j-j',m-m'}| ^{r}\bigg) ^{p/r}\bigg) ^{1/p}\\
	&\leq&\bigg( \sum_{Q_{j}\subset \mathbb{R}^{2}}\bigg(   \sum_{I_{m}\subset \mathbb{R}}|\tilde{f}_{j,m}| ^{r}\bigg) ^{p/r}\bigg) ^{1/p}\bigg( \sum_{Q_{j'},I_{m'}} (1+|(j',m')|)^{-M}\bigg)\notag \\
	&\lesssim&\bigg( \sum_{Q_{j}\subset \mathbb{R}^{2}}\bigg(   \sum_{I_{m}\subset \mathbb{R}}|\tilde{f}_{j,m}| ^{r}\bigg) ^{p/r}\bigg) ^{1/p}.\notag
	\end{eqnarray}
For the remainder term $A$, if $Q_{j'}\times I_{m'} \cap  B^{2}(0,4R)\times [-4R,4R] =\emptyset$, then we  have $(1+|j-j',m-m'|)^{-1}\lesssim (1+R)^{-1}$. And $Q_{j'}\times I_{m'}$ is a unit cube, then we have $1+|(x-w,t-s)|\approx 1+|(j-j',m-m')|$.  Thus, by the   property of $\eta$, we gives
	\begin{eqnarray*}
		A&\leq& \sum_{Q_{j'}\times I_{m'} \cap B^{2}(0,4R)\times [-4R,4R]= \emptyset} \sum_{|(j-j',m-m')|\gtrsim R}(1+|(j-j',m-m')|)^{-M} f_{j',m'}\\
		&\lesssim& \|f\|_2\sum_{|(j-j',m-m')|\gtrsim R}	(1+|(j-j',m-m')|)^{-M}
		\lesssim_M R^{3-M}\|f\|_2.
	\end{eqnarray*}
 We observe that there are at most $O(R^{2})$ sets $Q_{j}$ contained in $ B^{2}(0,4R)$ and $O(R)$ sets $ I_{m}$ contained in $[0,R+2]$. Then we get
 \begin{eqnarray*}
 \bigg( \sum_{Q_{j} \cap B ^{2} (0,R)\neq \emptyset }\bigg( \sum_{I_{m}\cap [0,R+2]\neq \emptyset} A^{r}\bigg) ^{p/r}\bigg) ^{1/p}\lesssim (1+R)^{-1000}\|e^{it\Delta}f\|_{2}=(1+R)^{-1000}\|f\|_{2}.
 \end{eqnarray*}
 For $\partial_{t} e^{it\Delta}f(x)$, we also apply the  Schwartz bump function $\eta$. Note that
 $$\partial_{t} e^{it\Delta}f(x)=   e^{it\Delta}f\ast (\partial_{t}\eta)(x)$$
 and $\partial_{t}\eta $ still is a Schwartz  function. Therefore, the proof of $\partial_{t} e^{it\Delta}f(x) $ is similar to $e^{it\Delta}f(x) $, and we omit the details.
\end{proof}

\begin{lemma}\label{th--1}
	Let  $2\leq p<3$, $\frac{3p}{3-p}\leq r<\infty$, $R\geq 1$, $f\in \mathcal{S}(\mathbb{R}^{2})$ and $supp\ \mathcal{F}_x(f) \subset\{ \xi\in \mathbb{R}^{2}: 1/2 \leq|\xi|\leq 2\}  $. There holds
	 \begin{eqnarray*}
	 	\left( \sum_{Q_{j} \cap B^{2}(0,4R)\neq \emptyset}\left( \sum_{I_{m}\cap [-4R,4R]\neq \emptyset} |f_{j,m}|^{r}\right)^{p/r} \right)^{1/p}\lesssim R^{2/p-2/3+\varepsilon} \|f\|_{2},
	 \end{eqnarray*}
where $f_{j,m}$ is given by $ \ref{eq-4}$.
\end{lemma}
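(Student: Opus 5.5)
Throughout, normalize $\|f\|_2=1$ and write $q:=q_p=\frac{3p}{3-p}$. Since $r\geq q$ and $\ell^q\subset\ell^r$ with norm at most one, it suffices to treat $r=q$. The plan is to bound the inner $\ell^q_m$ norm by a dyadic decomposition in the level $\lambda$ and in the number $K$ of time cubes exceeding that level. For each pair $(\lambda,K)$ I will combine two estimates: the fractal $L^2$ bound of Lemma \ref{le--1} with $n=2$, $\alpha=2$, and the maximal bound of Lemma \ref{co--1}. The numerology should make the power of $K$ cancel exactly.

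\textbf{Step 1 (reduction to dyadic pieces).} Each $f_{j,m}\le \|e^{it\Delta}f\|_{L^2(\mathbb{R}^2\times I_m)}\lesssim 1$. Call $f_{j,m}$ small if $f_{j,m}<R^{-10}$. There are $O(R^3)$ indices with $Q_j\cap B^2(0,4R)\neq\emptyset$ and $I_m\cap[-4R,4R]\neq\emptyset$, so the small terms contribute at most $O(R^{-5})$ to the left side, which is harmless. For dyadic $\lambda\in[R^{-10},1]$ and each $j$, set $N_j(\lambda):=\#\{m:f_{j,m}\ge\lambda\}\le 10R$. Then
\[
\sum_m f_{j,m}^q\lesssim \sum_{\lambda}\lambda^q N_j(\lambda).
\]
Since there are only $O(\log R)$ dyadic $\lambda$, this gives $\bigl(\sum_m f_{j,m}^q\bigr)^{p/q}\lesssim (\log R)^{C}\max_\lambda \lambda^pN_j(\lambda)^{p/q}$. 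Let $E_{\lambda,K}$ be the set of $Q_j$ with $N_j(\lambda)\approx K$, for dyadic $K\in[1,10R]$. Summing over $j$ then yields
\[
\text{LHS}^p\lesssim (\log R)^{C}\sum_{\lambda,K}\#E_{\lambda,K}\,\lambda^pK^{p/q}+O(R^{-5}).
\]
There are $O((\log R)^2)$ pairs $(\lambda,K)$, so it suffices to show $\#E_{\lambda,K}\lambda^pK^{p/q}\lesssim R^{2-2p/3+\varepsilon}$ for each pair.

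\textbf{Step 2 (two bounds for $\#E_{\lambda,K}$).} Let $X_{\lambda,K}$ be the union of the $B_{j,m}$ with $Q_j\in E_{\lambda,K}$ and $f_{j,m}\ge\lambda$. These cubes lie in $B^3(0,CR)$. Above each spatial cube there are at most $\approx K$ of them, so any ball of radius $\rho\ge1$ contains $\lesssim K\rho^2$ of them, i.e.\ $\gamma\lesssim K$ with $\alpha=2$.
\begin{itemize}
\item[(a)] Lemma \ref{le--1} with $n=2$ (after a harmless rescaling of the frequency support and of $R$ by constants) gives
\[
\lambda^2K\,\#E_{\lambda,K}\le\|e^{it\Delta}f\|_{L^2(X_{\lambda,K})}^2\lesssim K^{2/3}R^{2/3+\varepsilon}.
\]
Hence $\#E_{\lambda,K}\lesssim \lambda^{-2}K^{-1/3}R^{2/3+\varepsilon}$.
\item[(b)] For $Q_j\in E_{\lambda,K}$ some $m$ has $\lambda\le f_{j,m}\le\sup_{B_{j,m}}|e^{it\Delta}f|$. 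I will use the reproducing-formula/locally constant argument of Lemma \ref{le--3} to pass from this supremum to a comparable $L^3(Q_j)$ average of $\sup_{|t|\le 4R}|e^{it\Delta}f|$, up to Schwartz tails. Lemma \ref{co--1} then gives $\lambda^3\#E_{\lambda,K}\lesssim R^{\varepsilon}$.
\end{itemize}

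\textbf{Step 3 (interpolation and cancellation).} Put $\theta=3-p\in(0,1]$, so that $2\theta+3(1-\theta)=p$. Write $\#E\,\lambda^p=(\#E\,\lambda^2)^{\theta}(\#E\,\lambda^3)^{1-\theta}$ and insert (a) and (b):
\[
\#E_{\lambda,K}\lambda^p\lesssim K^{-(3-p)/3}R^{2(3-p)/3+\varepsilon}.
\]
Since $p/q=(3-p)/3$, multiplying by $K^{p/q}$ cancels the $K$-dependence exactly and leaves $R^{2-2p/3+\varepsilon}$. Taking $p$-th roots gives $R^{2/p-2/3+\varepsilon}$, and the logarithmic losses are absorbed into $R^\varepsilon$.

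\textbf{Main obstacle.} I expect the delicate point to be Step 2(b). One must justify that a large local $L^2$ mass $f_{j,m}$ forces the maximal function to be large on a fixed fraction of $Q_j$, not just at a single point, so that Lemma \ref{co--1} counts $E_{\lambda,K}$ correctly. I would handle this with the reproducing formula of Lemma \ref{le--3} in reverse. Concretely, $f_{j,m}$ is bounded by a rapidly decaying average of $\sup_{B_{j',m'}}|e^{it\Delta}f|$ over neighbouring cubes, and by Bernstein this supremum is controlled by the $L^3$ average of the maximal function over a slightly enlarged cube. The enlargement costs only a bounded overlap, and the tails are $O(R^{-1000})$.

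A second point to check is that the density hypothesis of Lemma \ref{le--1} holds uniformly in the centres and radii, including $\rho$ comparable to $R$. This is immediate from the ``at most $K$ cubes per column'' structure of $X_{\lambda,K}$.
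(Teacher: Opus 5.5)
Your argument is correct, and its analytic core is the paper's. You use the same sets $E_{\lambda,K}$ and $X_{\lambda,K}$, Lemma \ref{le--1} with $\alpha=2$ and $\gamma\lesssim K$, and Lemma \ref{co--1}. You also combine them by the same geometric mean: your weight $\theta=3-p$ on $\lambda^2\#E_{\lambda,K}$ is the paper's $\theta=6/p-2$ after raising to the $p$-th power. The result is the same per-level bound $\lambda^pK^{1-p/3}\#E_{\lambda,K}\lesssim R^{2-2p/3+\varepsilon}\|f\|_2^p$.

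Where you genuinely differ is in summing this bound. The paper rearranges each column in decreasing order and forms the weak-$\ell^{q_p}$ quantity $F(Q_j)=\sup_K K^{1/p-1/3}f_{Q_j}(K)$. It bounds the strong $\ell^{q_p}$ norm by a logarithmic multiple of $F(Q_j)$, then runs a layer-cake argument in $\mu$. There it covers $\{F(Q_j)\ge\mu\}$ by the cumulative sets $\{N(\lambda_K,Q_j)\ge K\}$, which needs a geometric sum over $E_{\lambda,2^lK}$ using $1/p-1/3>0$. You instead pigeonhole directly,
\[
\sum_m f_{j,m}^{q}\lesssim\sum_\lambda\lambda^qN_j(\lambda)\lesssim\log R\,\max_\lambda\lambda^qN_j(\lambda),
\]
and group columns by the exact value $N_j(\lambda)\approx K$. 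This is shorter and needs neither the rearrangement nor the layer cake. The paper's route does isolate a weak-type intermediate statement, but since $R^{\varepsilon}$ losses are allowed, it gains nothing here.

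The step you flag as the main obstacle, Step 2(b), is not delicate. Passing through $\sup_{B_{j,m}}|e^{it\Delta}f|$ and a locally-constant argument works, but it is a detour. Fubini with $|I_m|=1$ already gives
\[
\lambda^2\le f_{j,m}^2=\int_{Q_j}\int_{I_m}|e^{it\Delta}f(x)|^2\,dt\,dx\le\int_{Q_j}\sup_{t\in I_m}|e^{it\Delta}f(x)|^2\,dx .
\]
Summing over $Q_j\in E_{\lambda,K}$, applying H\"older on their union, and using Lemma \ref{co--1} at a comparable scale then yields $\lambda^3\#E_{\lambda,K}\lesssim R^{\varepsilon}\|f\|_2^3$. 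This is exactly what the paper does.
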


\begin{proof}
	Let $\lambda$ and $K$ be the dyadic numbers. Define
\begin{eqnarray}\label{adeq--1}
	\left\{
	\begin{array}{rcl}
		&&M(\lambda,Q_{j}):= \{m\in [-4R-4,4R+4]\cap \mathbb{Z}: f_{j,m}\geq \lambda \} ;\\
		&&N(\lambda,Q_{j}):=\# M(\lambda,Q_j);\\
		&&E_{\lambda,K}:=\{Q_{j}: Q_{j}\cap B^{2}(0,4R)\neq \emptyset \text{ and } N(\lambda,Q_{j})\approx K\};\\
		&&X_{\lambda,K}:=\bigcup_{Q_{j}\in E_{\lambda,K}}\bigcup_{m\in M(\lambda,Q_j)} Q_{j}\times I_{m}.
	\end{array}
	\right.
\end{eqnarray}
We regroup $f_{j,m}$ according to the number of unit cubes in $B^{2}(0,4R)\times [-4R-4,4R+4]$ and the size of $f_{j,m}$ over each cube. In fact, we ignore  so small $\lambda$ that $\lambda<R^{-100}\|f\|_{2}$. Thus we have $\#\{\lambda: R^{-100}\|f\|_{2}\leq \lambda \leq  R^{10}\|f\|_{2}\}\approx \log R$. The $K$ implies the number of $I_{m}$ which perpendicular to $Q_{j}.$ For $r\geq \frac{3p}{3-p}$, we can deduce
\begin{eqnarray}\label{eq--2}
	\left(\sum_{I_{m}\cap[-4R-4,4R+4]\neq\emptyset}|f_{j,m}|^{r}\right)^{\frac{1}{r}}&=&\left(\sum_{I_{m}\cap[-4R-4,4R+4]\neq\emptyset}|f_{j,m}|^{r-\frac{3p}{3-p}}|f_{j,m}|^{\frac{3p}{3-p}}\right)^{\frac{1}{r}}\notag\\
	&\leq&\left(\sum_{I_{m}\cap[-4R-4,4R+4]\neq\emptyset}|f_{j,m}|^{\frac{3p}{3-p}}\right)^{\frac{(3-p)}{3p}-\frac{1}{r}}\left(\sum_{I_{m}\cap[-4R-4,4R+4]\neq \emptyset}|f_{j,m}|^{\frac{3p}{3-p}}\right)^{\frac{1}{r}}\notag\\
	&=&\left(\sum_{I_{m}\cap[-4R-4,4R+4]\neq\emptyset}|f_{j,m}|^{\frac{3p}{3-p}} \right)^{\frac{3-p}{3p}}.
\end{eqnarray}
Since the terms with $m\le-1$ are treated in the same way by the change of variable, it suffices to consider the terms with $m\ge0$.

To obtain the local estimates on the critical curve, we   combine weak-type bounds for  rearrangements in the time variable. Define $$f_{Q_{j}}(1)\geq f_{Q_{j}}(2)\geq \ldots \geq f_{Q_{j}}([4R]+4),$$
where $f_{Q_{j}}(k)$ denotes the $k$-th largest value of $f_{j,m}$ over all unit cube $Q_{j}\times I_{m}$ perpendicular to $Q_{j}$. For any fixed  $l\in \mathbb{Z}$ with $1\leq l\leq [4R]+4$, there exists a dyadic number $K$ such that $K\leq l \leq 2K$. Thus there holds
$$ f_{Q_{j}}(l)\leq f_{Q_{j}}(K)\leq F(Q_{j}) K^{-1/p+1/3}\leq 2^{1/p-1/3} F(Q_{j})l^{-1/p+1/3}, $$
where
$$ F(Q_{j}):=\sup_{4R+4\geq K\geq 1} K^{1/p-1/3}f_{Q_{j}}(K).$$
Then we have
\begin{eqnarray}\label{eq--6}
	\left( 	\sum_{I_{m}\cap [0,4R+4]\neq \emptyset} f_{j,m}^{3p/(3-p)}\right)^{1/p-1/3} &=&  \left( \sum_{l=1}^{[4R]+4} f_{Q_{j}}(l)^{3p/(3-p)}\right)^{1/p-1/3}\\
	&\leq& 2^{1/p-1/3}F(Q_{j})\bigg( \sum_{l=1}^{[4R]+4} l^{-1}\bigg)^{1/p-1/3}\notag\\
	&\lesssim& O(\log R) F(Q_{j}).\notag
\end{eqnarray}
For any dyadic number $K\leq 4R+4$, it is easy to see
\begin{eqnarray}\label{eq--7}
	f_{Q_{j}}(K)\geq \lambda\Rightarrow N(\lambda,Q_{j})\geq K.
\end{eqnarray}
Set $$Q_{\mu}:= \{Q_{j} \cap B^{2}(0,4R)\neq \emptyset: F(Q_{j})\geq \mu\}.$$
For every $f_{j,m}$, there exists a uniform upper bound $\|f\|_{2}$. Therefore, we have
$$ F(Q_{j})=\sup_{K\leq 4R+4} K^{1/p-1/3}f_{Q_{j}}(K)\lesssim R^{1/p-1/3}\|f\|_{2}.$$
If  $\mu> F(Q_{j})$ for any $Q_{j}$, we have $|Q_{\mu}|=0$. By the layer-cake Theorem, we can deduce
\begin{eqnarray}\label{adeq--3}
	\sum_{Q_{j} \cap B^{2}(0,4R)\neq \emptyset} F(Q_{j})^{p}&=&p\int_{0}^{\infty}\mu^{p-1}\left(\sum_{ Q_{j} \cap B^{2}(0,4R)\neq \emptyset} \chi_{\{\mu\leq F(Q_{j}) \}} \right) d\mu\\
	&\leq&p\int_{0}^{CR^{1/p-1/3}\|f\|_{2}}\mu^{p-1}|\{Q_{j} \cap B^{2}(0,4R)\neq \emptyset:\mu\leq F(Q_{j}) \}| d\mu\notag\\
	&=&p\int_{0}^{\mu_{0}} \mu^{p-1}|\{Q_{j} \cap B^{2}(0,4R)\neq \emptyset:\mu\leq F(Q_{j}) \}| d\mu\notag\\
	&&+ p\int_{\mu_{0}}^{C R^{1/p-1/3}\|f\|_{2}} \mu^{p-1}|\{Q_{j}\cap B^{2}(0,4R)\neq \emptyset:\mu\leq F(Q_{j}) \}| d\mu\notag\\
	&\leq& p\int_{0}^{\mu_{0}} \mu^{p-1}R^{2} d\mu+ p\int_{\mu_{0}}^{C R^{1/p-1/3}\|f\|_{2}} \mu^{p-1}|Q_{\mu}| d\mu.\notag
\end{eqnarray}
  Then, for $\mu_{0}=R^{-2/3+\varepsilon} \|f\|_{2}$, we have 
$$ \left( \int_{0}^{\mu_{0}} \mu^{p-1}R^{2} d\mu \right) ^{1/p}\lesssim R^{2/p-2/3+\varepsilon}\|f\|_{2}. $$
Thus, we assume $0<\mu\leq F(Q_{j})$.
Then we have
\begin{eqnarray*}
	Q_{\mu}&=& \{Q_{j} \cap B^{2}(0,4R)\neq \emptyset:  \sup_{4R\geq K\geq 1} K^{1/p-1/3}f_{Q_{j}}(K)\geq \mu\}\\
	&\subset&\bigcup_{K} \{Q_{j} \cap B^{2}(0,4R)\neq \emptyset: K^{1/p-1/3}f_{Q_{j}}(K )\geq \mu\}
\end{eqnarray*}
For    $F(Q_{j})\geq \mu>\mu_{0}$ , there exists a $K$ such that
$$ K^{1/p-1/3}f_{Q_{j}}( K)\geq \mu,  $$
which indicates
$$f_{Q_{j}}(K )\geq K ^{1/3-1/p}\mu.$$
Then we can find a dyadic number $\lambda_{K}$  such that  $2\lambda_{K} >K ^{1/3-1/p}\mu\geq\lambda_{K}$ ( $\mu_0$ ensures that the number of $\lambda_K$ terms remains $O(\log R)$). Thus, we can deduce
\begin{eqnarray}\label{addeq--4}
	Q_{\mu}\subset\bigcup_{K}\{Q_{j} \cap B^{2}(0,4R)\neq \emptyset : N(\lambda_{K},Q_{j})\geq K \}=\bigcup_{K}\tilde{E}_{\lambda_{K},K },
\end{eqnarray}
where
$$\tilde{E}_{\lambda,K}:= \{Q_{j} \cap B^{2}(0,4R)\neq \emptyset: N(\lambda,Q_{j})\geq K\}.$$
For each fixed $Q_{j}\in E_{\lambda,K}$, the strip perpendicular to $Q_{j}$ contains $\approx K$ tubes. Therefore, we can get
\begin{eqnarray}\label{eq--3}
	\left| X_{\lambda,K} \right|\approx  K |E_{\lambda,K}|\approx K \#E_{\lambda,K}.
\end{eqnarray}
By \eqref{eq--3}, we have
\begin{eqnarray*}
	\|e^{it\Delta}f(x)\|_{L^{2}(X_{\lambda,K})}^{2}&=&\sum_{Q_{j}\in E_{\lambda,K}}\sum_{m\in M(\lambda,Q_{j})} f_{j,m}^{2}\\
	&\gtrsim& \lambda^{2}\left( \sum_{Q_{j}\in E_{\lambda,K}}N(\lambda,Q_{j}) \right)\\
	&\gtrsim&\lambda^{2}K |E_{\lambda,K}|.
\end{eqnarray*}
As for the parameters appearing in Lemma \ref{le--1}, we may assume without loss of generality that $n=2$, $\alpha=2$ and $X=X_{\lambda,K}$. Then we have
\begin{eqnarray*}
	\#\{B_{l}\subset X_{\lambda,K}:B_{l}\subset B(x',r) \} \lesssim Kr^{2},
\end{eqnarray*}
which indicates
\begin{eqnarray*}
	\gamma \lesssim \frac{K r^{2}}{r^{2}}= K.
\end{eqnarray*}
Thus, for $E_{\lambda,K}$, we can apply Lemma \ref{le--1} to deduce
\begin{eqnarray*}
	\lambda K^{1/2}|E_{\lambda,K}|^{1/2}\lesssim \|e^{it\Delta} f(x)\|_{L^{2}(X_{\lambda,K})}\lesssim K^{1/3} R^{1/3+\varepsilon} \|f\|_{2}.
\end{eqnarray*}
For $Q_{j}\in E_{\lambda,K}$, there exists an $m\in M(\lambda,Q_{j})$ such that
\begin{eqnarray*}
	\lambda^{2}&\leq& \int_{Q_{j}\times I_{m}}|e^{it\Delta} f(x)|^{2} dxdt\\
	&\leq& \int_{Q_{j}}(\sup_{-4R\leq t\leq 4R}|e^{it\Delta}f(x)|)^{2}dx.
\end{eqnarray*}
Note that the number of $Q_{j}\in  E_{\lambda,K}$ is equal to $|E_{\lambda,K}|$. Then for all $Q_{j}\in E_{\lambda,K}$, we have
\begin{eqnarray*}
	\lambda^{2}|E_{\lambda,K}|
	&\leq& \int_{\cup_{Q_{j} \in E_{\lambda,K}}}(\sup_{-4R\leq t\leq 4R}|e^{it\Delta}f(x)|)^{2}dx.
\end{eqnarray*}
By the H\"older inequalities and Lemma \ref{co--1}, we get
\begin{eqnarray*}
 	\lambda^{2}|E_{\lambda,K}|&\lesssim& \left\| \sup_{-4R\leq t\leq 4R}|e^{it\Delta}f(x)|\right\| _{L^{3}(B^{2}(0,4R))}^{2} \left| \bigcup_{Q_{j} \in E_{\lambda,K}} Q_{j}\right|^{1/3}\\
    &\lesssim& R^{\varepsilon}\|f\|_{2}^{2} |E_{\lambda,K}|^{1/3},
\end{eqnarray*}
which implies
$$ \lambda |E_{\lambda,K}|^{1/3} \lesssim R^{\varepsilon} \|f\|_{2}.$$
To obtain the $L^{p}$ estimates of variational Schr\"odinger operator, we apply Lemma \ref{le-8}:
\begin{eqnarray*}
  \lambda K^{\theta/6}|E_{\lambda,K}|^{(1-\theta)/3+\theta/2}\leq R^{\theta/3+\varepsilon} \|f\|_{2},
\end{eqnarray*}
there $\theta$ satisfies
\begin{eqnarray*}
	 \frac{1}{p}=\frac{\theta}{2}+\frac{1-\theta}{3}\Rightarrow \theta =\frac{6}{p}-2.
\end{eqnarray*}
Thus we have
$$\lambda K^{1/p-1/3}|E_{\lambda,K}|^{1/p}\lesssim R^{2/p-2/3+\varepsilon} \|f\|_{2},$$
which implies
\begin{eqnarray}\label{eq--4}
	|E_{\lambda,K}| \lesssim \left( \frac{R^{2/p-2/3+\varepsilon}\|f\|_{2}}{\lambda K^{1/p-1/3}}\right) ^{p}.
\end{eqnarray}
Then we get
$$\tilde{E}_{\lambda,K}\subset \bigcup_{\{l\in \mathbb{Z}_{+}:\left[  \log_{2}\left(\frac{4R+4}{K} \right)\right] +1 \geq l\geq 0 \}} E_{\lambda,2^{l}K},$$
where upper bound of $l$ ensures $2^{l}K$ can attain all possible dyadic numbers. By \eqref{eq--4}, we have
\begin{eqnarray}\label{eq--5}
	\bigg|  \tilde{E}_{\lambda,K}\bigg| \leq \sum_{l\geq 0}|E_{\lambda,2^{l}K}|\leq  \sum_{l\geq 0}\left( \frac{R^{2/p-2/3+\varepsilon}\|f\|_{2}}{\lambda (2^{l}K)^{1/p-1/3}} \right)^{p}.
\end{eqnarray}
Following $p\in[2,3)$, we have
        \begin{eqnarray*}
         \bigg|  \tilde{E}_{\lambda,K}\bigg| \lesssim   \left( \frac{R^{2/p-2/3+\varepsilon}\|f\|_{2}}{\lambda K^{1/p-1/3}} \right)^{p}.
        \end{eqnarray*}
For the fixes $Q_{j}$, we need to rearrange the $f_{j,m}$ in the tubes perpendicular to $Q_{j}$ according to their sizes.   By \eqref{eq--7}, we have $N(\lambda_{K},Q_{j})\geq K $. 

Note  that $4R+4\geq K\geq 1$, there are at most $O(\log R)$ possible values of $K$ satisfying this condition. Then, by \eqref{eq--5} and \eqref{addeq--4}, we can deduce
\begin{eqnarray}\label{eq--8}
 |Q_{\mu}|&\leq& \sum_{K}   |\tilde{E}_{\lambda_{K},K}|\\
 &\lesssim&\sum_{K}\left( \frac{R^{2/p-2/3+\varepsilon}\|f\|_{2}}{\lambda_{K} K^{1/p-1/3}} \right)^{p}\notag\\
 &\lesssim&O(\log R)\left( \frac{R^{2/p-2/3+\varepsilon}\|f\|_{2}}{  \mu} \right)^{p}.\notag
\end{eqnarray}
Also, take $\mu_{0}=R^{-2/3+\varepsilon}\|f\|_{2}$. By the estimate \eqref{adeq--3} and \eqref{eq--8}, we can deduce
 \begin{eqnarray}\label{eq--9}
 	\left( \sum_{Q_{j}\cap B^{2}(0,4R)\neq \emptyset} F(Q_{j})^{p}\right)^{1/p} &\lesssim& \mu_{0}R^{2/p+\varepsilon}+R^{2/p-2/3+\varepsilon}\|f\|_{2}\left( \int_{\mu_{0}}^{R^{1/p-1/3}\|f\|_{2}} \mu^{-1}d\mu\right) ^{1/p}\notag\\
 	&\lesssim& R^{2/p-2/3+\varepsilon}\|f\|_{2}+ R^{2/p-2/3+\varepsilon}\|f\|_{2}\log R^{1/p+1/3-\varepsilon}\notag\\
 	&\lesssim& R^{2/p-2/3+\varepsilon}\|f\|_{2}.
 \end{eqnarray}
 Therefore, by \eqref{eq--2}, \eqref{eq--6} and \eqref{eq--9}, we have
 $$	\left( \sum_{Q_{j}\cap B^{2}(0,4R)\neq \emptyset}\left( \sum_{I_{m}\cap [-4R-4,4R+4]\neq \emptyset} |f_{j,m}|^{r}\right)^{p/r} \right)^{1/p}\lesssim R^{2/p-2/3+\varepsilon} \|f\|_{2}$$
 for $r\geq\frac{3p}{3-p}$.
\end{proof}
\begin{lemma}\label{Le-2}
	Let $2\leq p <3, \frac{3p}{3-p}\leq r\leq \infty$, $R\geq 1$  and $f\in \mathcal{S}(\mathbb{R}^{2}),$
	where $supp\ \mathcal{F}_x(f) \subset \{ \xi\in \mathbb{R}^{2}: 1/2 \leq|\xi|\leq 2\}  $. There holds
	$$\|V_{[0,R]}^{r}\{e^{it\Delta}f\}\|_{L^{p}(B^{2}(0,R))}
	\lesssim  R^{2/p-2/3+\varepsilon} \|f\|_{2}. $$
\end{lemma}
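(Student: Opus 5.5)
The plan is to combine Lemma \ref{le--3} with Lemma \ref{th--1}, and to handle $r=\infty$ separately.

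\textbf{Finite $r$.} For $\frac{3p}{3-p}\leq r<\infty$, Lemma \ref{le--3} bounds $\|V^r_{[0,R]}\{e^{it\Delta}f\}\|_{L^p(B^2(0,R))}$ by the mixed $\ell^p_j(\ell^r_m)$ norm of the local $L^2$ quantities $f_{j,m}$. The sum runs over $Q_j\cap B^2(0,4R)\neq\emptyset$ and $I_m\cap[-4R,4R]\neq\emptyset$, and there is an error $O(R^{-1000})\|f\|_2$. Lemma \ref{th--1} bounds that mixed norm by $R^{2/p-2/3+\varepsilon}\|f\|_2$ on the same index set. The error term is trivially dominated, since $2/p-2/3>-1000$. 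This gives the claim for every finite $r$ in the range.

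The one bookkeeping point is the role of the time derivative. Lemma \ref{le--3} was proved by controlling $\sup_{B_{j,m}}(|e^{it\Delta}f|+|\partial_te^{it\Delta}f|)$ through the reproducing kernels $\eta$ and $\partial_t\eta$. Both are Schwartz, so both terms reduce to the same $f_{j,m}$ with rapidly decaying weights, and the statement of Lemma \ref{le--3} already absorbs this. No extra care is needed.

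\textbf{The endpoint $r=\infty$.} Here $V^\infty_{[0,R]}=2\sup_{t\in[0,R]}|e^{it\Delta}f|$. There are two options.
\begin{itemize}
\item[(a)] Use monotonicity $V^\infty\le V^r$, since $\ell^r\subset\ell^\infty$ with norm one, and apply the finite case with $r=\frac{3p}{3-p}$.
\item[(b)] Directly apply Hölder on $B^2(0,R)$ to the $L^3$ maximal bound of Lemma \ref{le--2}. This gives $R^{2(1/p-1/3)+\varepsilon}\|f\|_2=R^{2/p-2/3+\varepsilon}\|f\|_2$.
\end{itemize}
Either works. For (a) one should check that the definition with the factor $2$ still satisfies $2\sup|F|\le C\,V^r F$ up to the value at one point. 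This is not literally true, since a constant function has zero variation. So I would use (b) for $r=\infty$: it is clean and needs only Lemma \ref{le--2} and Hölder, using $p<3$.

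\textbf{Main obstacle.} The only real difficulty is the proof of Lemma \ref{th--1}, which is taken as given. Given that lemma, the remaining points are routine: the index sets and the $[-4R,4R]$ enlargement must match between Lemmas \ref{le--3} and \ref{th--1}, and the Schwartz tails must be uniform in $r$.
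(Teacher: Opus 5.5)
Your argument for finite $r$ is exactly the paper's proof. Lemma \ref{le--3} reduces the variation to the $\ell^p_j(\ell^r_m)$ norm of $\{f_{j,m}\}$ over the same enlarged index set, Lemma \ref{th--1} bounds this by $R^{2/p-2/3+\varepsilon}\|f\|_2$, and the $O(R^{-1000})\|f\|_2$ tail is absorbed.

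At $r=\infty$ the paper cites the same two lemmas without comment, although Lemma \ref{th--1} is stated only for $r<\infty$. Moreover, with the convention $V^\infty=2\sup|F|$, literal monotonicity from finite $r$ fails for the reason you give (constants have zero variation). Your route (b), H\"older on $B^2(0,R)$ applied to the Du--Guth--Li bound of Lemma \ref{le--2} with $p<3$, correctly closes that endpoint and is the cleanest way to do so.
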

\begin{proof}
	By Lemma \ref{le--3}  and Lemma \ref{th--1}, we have
	\begin{eqnarray*}
	&&\|V_{[0,R]}^{r}\{e^{it\Delta}f(x)\}\|_{L^{p}(B^{2}(0,R))}\\
	&\lesssim& \left( \sum_{Q_{j} \cap B^{2}(0,4R)\neq \emptyset}\left( \sum_{I_{m}\cap [0,R+2]\neq \emptyset} \Big(\sup_{(x,t) \in B_{j,m}}( |e^{it\Delta}f(x)|+|\partial_{t}e^{it\Delta}f(x)|)\ \Big)^{r}\right)^{p/r} \right)^{1/p}\\
	 &\lesssim& \left( \sum_{Q_{j} \cap B^{2}(0,4R)\neq \emptyset}\left( \sum_{I_{m}\cap [0,R+2]\neq \emptyset} |f_{j,m}|^{r}\right)^{p/r} \right)^{1/p}+O(R^{-1000})\|f\|_{2}\\
	&\lesssim& R^{2/p-2/3+\varepsilon} \|f\|_{2}.
	\end{eqnarray*}
\end{proof}

\begin{lemma}\label{Le-3}
	Let $2\leq p\leq\infty$, $R\geq1$, and let
	$f\in\mathcal S(\mathbb{R}^2)$ satisfy
	$$\operatorname{supp}\mathcal{F}_x(f)\subset\left\{\xi\in\mathbb{R}^2:\frac12\leq|\xi|\leq 2\right\}.$$
	Then
	\begin{eqnarray*}
		\left\|V_{[0,R]}^2\{e^{it\Delta}f\}\right\|_{L^p(B^2(0,R))}\lesssim_p R^{	\max\{-\frac14+\frac{3}{2p},0\}}\|f\|_2.
	\end{eqnarray*}
\end{lemma}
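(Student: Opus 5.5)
The plan is to prove the two endpoint cases $p=2$ and $p=6$, interpolate between them for $2\le p\le 6$, and treat $p\ge 6$ by monotonicity in $p$ on unit cubes. Note that $\frac{3}{2p}-\frac14$ equals $\frac12$ at $p=2$, equals $0$ at $p=6$, and is affine in $1/p$.

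\textbf{Step 1 (reduction).} First apply Lemma~\ref{le--3} with $r=2$. This reduces matters to bounding
\begin{equation*}
\Big(\sum_{Q_j\cap B^2(0,4R)\neq\emptyset}\Big(\sum_{I_m\cap[-4R,4R]\neq\emptyset}f_{j,m}^2\Big)^{p/2}\Big)^{1/p},
\end{equation*}
up to an error $O(R^{-1000})\|f\|_2$. Writing $G_j:=\big(\sum_m f_{j,m}^2\big)^{1/2}=\|e^{it\Delta}f\|_{L^2(Q_j\times[-4R-1,4R+1])}$, the task becomes bounding $\|\{G_j\}\|_{\ell^p_j}$. The map $f\mapsto \{e^{it\Delta}f\,\chi_{Q_j\times[-4R-1,4R+1]}\}_j$ is linear into $\ell^p_j(L^2)$. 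So Lemma~\ref{le-8}, in its vector-valued form (or real interpolation of the sequence spaces $\ell^p(L^2)$), lets me interpolate between endpoint bounds for $\|\{G_j\}\|_{\ell^p}$.

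\textbf{Step 2 ($p=2$).} By Fubini and unitarity,
\begin{equation*}
\sum_j G_j^2=\int_{-4R-1}^{4R+1}\|e^{it\Delta}f\|_{L^2(B^2(0,5R))}^2\,dt\lesssim R\|f\|_2^2 .
\end{equation*}
This gives $\|\{G_j\}\|_{\ell^2}\lesssim R^{1/2}\|f\|_2$.

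\textbf{Step 3 ($p\ge 6$).} I would first record the trivial case $p=\infty$. For frequencies $|\xi|\approx1$, the Kato-type local smoothing estimate gives
\begin{equation*}
\sup_j\int_{\mathbb R}\int_{Q_j}|e^{it\Delta}f|^2\,dx\,dt\lesssim\|f\|_2^2 .
\end{equation*}
It follows from Plancherel in $t$ and the coarea formula on the circles $|\xi|=\sqrt{\tau}$; equivalently, wave packets with speed $\approx1$ spend time $O(1)$ in a unit cube. However, interpolating this with Step~2 only yields $R^{1/p}$, which is too weak. The real input is the endpoint $p=6$: the global mixed-norm estimate
\begin{equation*}
\|e^{it\Delta}f\|_{L^6_x(\mathbb R^2)L^2_t(\mathbb R)}\lesssim\|f\|_2 \quad\text{for }\operatorname{supp}\mathcal F_x f\subset\{|\xi|\approx1\}.
\end{equation*}
This is the Planchon / Rogers--Villarroya type $L^p_xL^2_t$ estimate, which holds for $p$ above the threshold $2(n+1)/n=3$ in two dimensions (cf.\ \cite{R2008,RS2010,LRS2013}). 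Given it, I localize: $G_j\le\big\|\,\|e^{it\Delta}f\|_{L^2_t(\mathbb R)}\big\|_{L^2(Q_j)}\le\big\|\,\|e^{it\Delta}f\|_{L^2_t}\big\|_{L^6(Q_j)}$, and then sum in $j$ to get $\|\{G_j\}\|_{\ell^6}\lesssim\|f\|_2$. For $p>6$, the embedding $\ell^6\subset\ell^p$ gives the same bound $R^0$.

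\textbf{Step 4 (interpolation and main obstacle).} For $2\le p\le6$, write $1/p=(1-\theta)/2+\theta/6$. Interpolating Steps~2 and~3 gives the exponent $(1-\theta)/2=\frac{3}{2p}-\frac14$. Together with Step~3 for $p\ge6$, this is exactly $R^{\max\{\frac{3}{2p}-\frac14,0\}}$. The main obstacle is the $p=6$ endpoint. Strichartz ($L^3_tL^6_x$ plus Hölder in time) only gives $R^{1/6}$, so the sharp $L^6_xL^2_t$ estimate, with no $\varepsilon$-loss, is essential. If citing it directly is problematic, I would derive it from the $TT^*$ formulation: take the Fourier transform in $t$, which puts the estimate on circles of radius $\approx1$, and use the restriction/Stein--Tomas $L^2\to L^6$ bound for the $3$-dimensional cone–paraboloid extension after the change of variables $\tau=|\xi|^2$.
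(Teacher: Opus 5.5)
Your argument is correct and is essentially the paper's proof. It uses the same two endpoints: $R^{1/2}$ at $p=2$ from Fubini and unitarity, and $O(1)$ at $p=6$ from the global $L^6_xL^2_t$ bound, which the paper takes from \cite[Theorem 1.6]{LMY2024}. It then interpolates for $2\le p\le 6$ and reduces $p\ge 6$ to $p=6$. The only real difference is the entry point. You pass through the space--time cube discretization of Lemma~\ref{le--3} with $r=2$ and H\"older on unit cubes, so that $\ell^6\subset\ell^p$ replaces the paper's Bernstein step. The paper instead uses a purely temporal reproducing kernel at each fixed $x$ to get $V^2_{[0,R]}\lesssim(\int w(s)|e^{is\Delta}f(x)|^2\,ds)^{1/2}$ pointwise, with a bounded weight $w$ of total mass $\lesssim R$.

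One side remark should be corrected, although it does not affect your proof since you only use $p=6$. With $L^2_t$ and $L^2$ data, the mixed-norm bound does not hold for all $p>3$: the Knapp example forces $3/p+1/q\le 1$, hence $p\ge 6$ when $q=2$. Your fallback derivation should also be stated differently. It is Plancherel in $t$, then Minkowski, then the two-dimensional Stein--Tomas bound $L^2(S^1)\to L^6(\mathbb{R}^2)$ on each circle $|\xi|=\rho\approx 1$, not a three-dimensional extension estimate.
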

\begin{proof}
	Let
	$$I_m=[m,m+1],$$
	where the sum below is taken over all $m$ such that $I_m\cap[0,R+1]\neq\varnothing$. By \eqref{eq-6},
	\begin{eqnarray}\label{eq--10}
		V_{[0,R]}^2\{e^{it\Delta}f(x)\}\lesssim\left(\sum_m\left[\sup_{t\in I_m}\left(|e^{it\Delta}f(x)|+|\partial_t e^{it\Delta}f(x)|\right)\right]^2\right)^{1/2}.
	\end{eqnarray}
	For $\mathcal{F}_{t}\left[e^{it\Delta}f\right] $, we have
	\begin{eqnarray*}
		H:=\operatorname{supp}\mathcal{F}_{t}\left[e^{it\Delta}f\right]\subset[-4,-\frac{1}{4}].
	\end{eqnarray*}
 Similarly to the proof of \eqref{eq--1},   by introducing a smooth bump function $\psi\in \mathcal{S}(\mathbb{R})$ and $	\mathcal{F}_{t}\left[ \psi\right] |_{H}=1$, we can obtain
	\begin{eqnarray*}
		|e^{it\Delta}f(x)|+|\partial_t e^{it\Delta}f(x)|&\leq&\int_{\mathbb{R}}\left(|\psi(t-s)|+|\psi'(t-s)|\right) |e^{is\Delta}f(x)|ds\\
		&\lesssim&\int_{\mathbb{R}}	(1+|s-m|)^{-M}|e^{is\Delta}f(x)|  ds
	\end{eqnarray*}
for every $t\in I_m$. By the Cauchy--Schwarz inequality,
	\begin{eqnarray}	\label{eq--11}
		\sup_{t\in I_m}\left(|e^{it\Delta}f(x)|+|\partial_t e^{it\Delta}f(x)|\right)^2\lesssim\int_{\mathbb{R}}(1+|s-m|)^{-M}|e^{is\Delta}f(x)|^2ds.
 	\end{eqnarray}
	Summing \eqref{eq--11} in $m$ and using
	\eqref{eq--10}, we obtain
	\begin{eqnarray}\label{eq--12}
		V_{[0,R]}^2\{e^{it\Delta}f(x)\}\lesssim_M\left(\int_{\mathbb{R}}\bigg(\sum_{\substack{m\in\mathbb{Z}\\I_m\cap[0,R]\neq\varnothing}}(1+|s-m|)^{-M}\bigg)|e^{is\Delta}f(x)|^2ds\right)^{1/2},
	\end{eqnarray}
	where
	$$\sum_{\substack{m\in\mathbb{Z}\\I_m\cap[0,R]\neq\varnothing}}(1+|s-m|)^{-M}$$
	satisfies
	\begin{eqnarray}\label{eq--13}
		\left\{
		\begin{array}{rcl}
			&\sup_{s\in\mathbb{R}}\sum_{\substack{m\in\mathbb{Z}\\I_m\cap[0,R]\neq\varnothing}}(1+|s-m|)^{-M}\lesssim C;\\
			&\int_{\mathbb{R}}\sum_{\substack{m\in\mathbb{X}\\I_m\cap[0,R]\neq\varnothing}}(1+|s-m|)^{-M}ds\lesssim R.
		\end{array}
		\right.
	\end{eqnarray}

	We first estimate the right-hand side of \eqref{eq--12} in $L_x^2$. By Fubini's theorem,
	\begin{eqnarray*}
		&&\left\|\left(\int_{\mathbb{R}}\bigg(\sum_{\substack{m\in\mathbb{Z}\\I_m\cap[0,R]\neq\varnothing}}(1+|s-m|)^{-M}\bigg)|e^{is\Delta}f(x)|^2ds\right)^{1/2}\right\|_{L_x^2(B^{2}(0,R))}^2\\
		&\leq&\int_{\mathbb{R}}\sum_{\substack{m\in\mathbb{Z}\\I_m\cap[0,R]\neq\varnothing}}(1+|s-m|)^{-M}  \|e^{is\Delta}f\|_2^2 ds\\
		&\lesssim&R\|f\|_2^2.
	\end{eqnarray*}
	Therefore, we can deduce
	\begin{eqnarray}\label{eq--14}
		\left\|V_{[0,R]}^2\{e^{it\Delta}f(x)\}\right\|_{L_x^2(B^{2}(0,R))}\lesssim R^{1/2}\|f\|_2.
	\end{eqnarray}
	We next prove the $L_x^6(B^{2}(0,R))$ estimate. By \eqref{eq--13},
	\begin{eqnarray*}
		&&\left\|\left( \int_{\mathbb{R}}\sum_{\substack{m\in\mathbb{Z}\\ I_m\cap[0,R]\neq\varnothing}}(1+|s-m|)^{-M}|e^{is\Delta}f(x)|^2ds\right)^{1/2}\right\|_{L_x^6(B^{2}(0,R))}\\
		&\lesssim&\left\|\left(\int_{\mathbb{R}}|e^{is\Delta}f(x)|^2ds\right)^{1/2}\right\|_{L_x^6(B^{2}(0,R))}.
	\end{eqnarray*}

By the endpoint mixed-norm estimate of Li, Miao and Yu\cite[Theorem 1.6]{LMY2024} and $\operatorname{supp}\mathcal{F}_x(f)\subset\left\{\xi\in\mathbb{R}^2:\frac12\leq|\xi|\leq2\right\}$, we have
\begin{eqnarray}\label{eq-add2}
	\|e^{it\Delta}f\|_{L_x^6(\mathbb{R}^2;L_t^2(\mathbb{R}))}\lesssim\|f\|_{\dot H^{-1/3}(\mathbb{R}^2)}\approx\|f\|_2.
\end{eqnarray} 
By Bernstein’s inequality and the Fourier support, the same bound holds for
$6\leq p\leq\infty$. Consequently,  for $p\geq6$, we have 
\begin{eqnarray}\label{eq-add4}
	\left\|	V_{[0,R]}^2\{e^{it\Delta}f(x)\}	\right\|_{L_x^p(B^{2}(0,R))} &\lesssim &	\left\| \left(\sum_m\left[\sup_{t\in I_m}\left(|e^{it\Delta}f(x)|+|\partial_t e^{it\Delta}f(x)|\right)	\right]^2\right)^{1/2}\right\|_{L_x^p(B^{2}(0,R))}\notag\\
	&\lesssim&\left\|\left(\int_{\mathbb{R}}|e^{it\Delta}f(x)|^2dt\right)^{1/2}\right\|_{L_x^p(B^2(0,R))}\lesssim \|f\|_2.
\end{eqnarray}
	For $2\leq p\leq6$, choose $0\leq\theta\leq1$ such that
	\begin{eqnarray*}
		\frac{1}{p}=\frac{\theta}{2}+\frac{1-\theta}{6}\Rightarrow\theta=\frac{3}{p}-\frac{1}{2}.
	\end{eqnarray*}
	By 	\eqref{eq-add4} with $p=6$, we obtain 
	\begin{eqnarray}\label{eq-add5}
		\left\|V_{[0,R]}^2e^{it\Delta}f(x)\right\|_{L_x^6(B^{2}(0,R))}\lesssim
		\|f\|_2.
	\end{eqnarray}
Thus, interpolating	\eqref{eq--14} and \eqref{eq-add5}, we find
	\begin{eqnarray}\label{eq-add1}
		&&\left\|V_{[0,R]}^2\{e^{it\Delta}f(x)\}\right\|_{L_x^p(B^{2}(0,R))} \notag\\
	    &\lesssim&\left\|\left(\int_{\mathbb{R}}\bigg(\sum_{\substack{m\in\mathbb{Z}\\I_m\cap[0,R]\neq\varnothing}}(1+|s-m|)^{-M}\bigg)|e^{is\Delta}f(x)|^2ds\right)^{1/2}\right\| _{L_x^p(B^{2}(0,R))}\notag\\
		&\lesssim&R^{\theta/2}\|f\|_2=R^{-\frac14+\frac{3}{2p}}\|f\|_2
	\end{eqnarray}
	with $2\leq p\leq6$. This completes the proof.
\end{proof}

\begin{lemma}\label{Le-4}
	Let $2\leq p <3, 2\leq r\leq \frac{3p}{3-p}\text{ and }f\in\mathcal{S}(\mathbb{R}^{2})$, where$supp\mathcal{F}_x(f)\subset\{\xi\in \mathbb{R}^{2}: 1/2 \leq|\xi|\leq 2\} $. There holds
	$$\|V_{[0,R]}^{r}\{e^{it\Delta}f(x)\}\|_{L^{p}(B^{2}(0,R))}\lesssim  R^{-\frac{1}{2}+\frac{3}{2p}+\frac{1}{2r}+\varepsilon} \|f\|_{2}. $$
\end{lemma}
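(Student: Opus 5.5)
The exponent is affine in $1/r$ and matches the two endpoints already proved, so the plan is to interpolate between them in $r$ with $p$ fixed. At $r=2$, $-\frac12+\frac3{2p}+\frac14=-\frac14+\frac3{2p}$, which is the exponent of Lemma \ref{Le-3} (note $2\le p<3\le 6$). At $r=q_p:=\frac{3p}{3-p}$, we have $\frac1{2q_p}=\frac1{2p}-\frac16$, so the exponent equals $\frac2p-\frac23$, which is that of Lemma \ref{Le-2}. Since $r\mapsto V^r$ is not linear, I will not interpolate operators. Instead I interpolate pointwise through a discretized majorant, using Hölder's inequality twice.

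\textbf{Step 1: a common majorant.} By \eqref{eq-6}, for every $r\ge1$,
$$V^r_{[0,R]}\{e^{it\Delta}f(x)\}\lesssim \|G(x)\|_{\ell^r_m},\qquad G_m(x):=\sup_{t\in I_m}\bigl(|e^{it\Delta}f(x)|+|\partial_te^{it\Delta}f(x)|\bigr),$$
with $m$ ranging over $I_m\cap[0,R]\neq\emptyset$. So it suffices to bound $\|\,\|G(x)\|_{\ell^r_m}\|_{L^p(B^2(0,R))}$, and I need the endpoint bounds for $G$ itself rather than for $V^r$.

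\textbf{Step 2: the endpoint bounds for $G$.} For $r=2$, the proof of Lemma \ref{Le-3} (estimates \eqref{eq--11}--\eqref{eq-add1}) actually bounds $\|\,\|G(x)\|_{\ell^2_m}\|_{L^p}$ by $R^{-\frac14+\frac3{2p}}\|f\|_2$. For $r=q_p$, the proof of Lemma \ref{le--3} bounds $\|\,\|G(x)\|_{\ell^{q_p}_m}\|_{L^p}$ by the $\ell^p_j(\ell^{q_p}_m)$ norm of $\sup_{B_{j,m}}(|u|+|\partial_tu|)$. That in turn is at most the $\ell^p_j\ell^{q_p}_m$ norm of $f_{j,m}$ plus $O(R^{-1000})\|f\|_2$. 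Lemma \ref{th--1} then gives $R^{\frac2p-\frac23+\varepsilon}\|f\|_2$ for this quantity.

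\textbf{Step 3: interpolation by Hölder.} For $2\le r\le q_p$ choose $\theta\in[0,1]$ with $\frac1r=\frac{1-\theta}2+\frac\theta{q_p}$. Pointwise in $x$, Hölder's inequality in $m$ (log-convexity of $\ell^r$ norms) gives
$$\|G(x)\|_{\ell^r}\le\|G(x)\|_{\ell^2}^{1-\theta}\|G(x)\|_{\ell^{q_p}}^{\theta}.$$
Hölder's inequality in $x$ with exponents $\frac p{(1-\theta)p}$ and $\frac p{\theta p}$ then yields
$$\bigl\|\,\|G\|_{\ell^r}\bigr\|_{L^p}\le\bigl\|\,\|G\|_{\ell^2}\bigr\|_{L^p}^{1-\theta}\bigl\|\,\|G\|_{\ell^{q_p}}\bigr\|_{L^p}^{\theta}.$$
Inserting Step 2 gives the power $R^{(1-\theta)(-\frac14+\frac3{2p})+\theta(\frac2p-\frac23)+\varepsilon}$. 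Because both endpoint exponents agree with $-\frac12+\frac3{2p}+\frac1{2r}$ at $\frac1r=\frac12$ and $\frac1r=\frac1{q_p}$, and all three expressions are affine in $\frac1r$ (hence in $\theta$), this power equals $R^{-\frac12+\frac3{2p}+\frac1{2r}+\varepsilon}$. The $O(R^{-1000})$ tail is harmless.

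The main obstacle is Step 2: making sure the cited proofs really bound the majorant $G$, and not merely $V^r$. For $r=2$ this is immediate from \eqref{eq--12}. For $r=q_p$ one must run the $\eta$-reproducing argument of Lemma \ref{le--3} with $\partial_t\eta$ as well, which the paper already indicates.
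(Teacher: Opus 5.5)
Your argument is correct and is essentially the paper's proof. Both interpolate in $1/r$ by Hölder between the $r=2$ bound of Lemma \ref{Le-3} and the $r=q_p$ bound of Lemma \ref{Le-2}, first pointwise in $x$ and then in $L^p_x$, with the same affine exponent bookkeeping.

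The one difference is your detour through the majorant $G$, which is unnecessary. Log-convexity holds for the variation itself, $V^r\le (V^2)^{1-\theta}(V^{q_p})^{\theta}$, because it holds for each fixed partition and the supremum of a product is at most the product of the suprema. The paper therefore applies the two lemmas as stated, without reopening their proofs as your Step 2 does.
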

\begin{proof}
Lemma \ref{Le-3} and Lemma \ref{Le-2} give the two endpoint estimates
	\begin{eqnarray*}
		\left\{
		\begin{array}{rcl}
			\left\|V_{[0,R]}^2\{e^{it\Delta}f\}\right\|_{L^p(B^2(0,R))}
			&\lesssim&R^{-\frac14+\frac{3}{2p}}\|f\|_2,\quad 3>p\geq 2;\\
			\left\|V_{[0,R]}^{q_p} \{e^{it\Delta}f\}\right\|_{L^p(B^2(0,R))}
			&\lesssim&R^{\frac2p-\frac23+\varepsilon}\|f\|_2\quad 3>p\geq 2.
		\end{array}
		\right.
	\end{eqnarray*}
	Fix $2\leq r<q_p$, and choose $0<\theta\leq1$ such that
	\begin{eqnarray}\label{eq--16}
		\frac{1}{r}=\frac{\theta}{2}+\frac{1-\theta}{q_p}.
	\end{eqnarray}
	By	Hölder's inequality, we have
	\begin{eqnarray}\label{eq--17}
		V_{[0,R]}^r\{e^{it\Delta}f(x)\}\leq\left(V_{[0,R]}^2\{e^{it\Delta}f(x)\}\right)^\theta\left(V_{[0,R]}^{q_p}\{e^{it\Delta}f(x)\}\right)^{1-\theta}.
	\end{eqnarray}
	Applying Hölder's inequality in $x$ to \eqref{eq--17}, we obtain
	\begin{eqnarray}\label{eq--18}
		\left\|V_{[0,R]}^r\{e^{it\Delta}f\}\right\|_{L^p(B^2(0,R))}&\leq&\left\|V_{[0,R]}^2\{e^{it\Delta}f\}\right\|_{L^p(B^2(0,R))}^{\theta}\left\|V_{[0,R]}^{q_p}\{e^{it\Delta}f\}\right\|_{L^p(B^2(0,R))}^{1-\theta}.
	\end{eqnarray}
	Since
	$$\frac{1}{q_p}=\frac{1}{p}-\frac{1}{3},$$
	equation \eqref{eq--16} becomes
	\begin{eqnarray*}
		\frac{1}{r}=\frac{\theta}{2}+(1-\theta)\left(\frac{1}{p}-\frac{1}{3}\right),
	\end{eqnarray*}
	which indicates
	\begin{eqnarray}\label{eq--19}
		\theta=\frac{\frac{1}{r}-\frac{1}{q_p}}{\frac{1}{2}-\frac{1}{q_p}}.
	\end{eqnarray}
	For \eqref{eq--18}, by \eqref{eq--19}, we have
	\begin{eqnarray*}
		\left\|V_{[0,R]}^r \{e^{it\Delta}f\}\right\|_{L^p(B^2(0,R))}
		&\lesssim&R^{(-\frac{1}{4}+\frac{3}{2p})\theta+(1-\theta)\left(\frac{2}{p}-\frac{2}{3} \right) +\varepsilon}\|f\|_2\\
		&\lesssim&R^{-\frac{1}{2}+\frac{3}{2p}+\frac{1}{2r}+\varepsilon}\|f\|_2.
	\end{eqnarray*}
\end{proof}

\subsection{The range $1\leq r\leq2$ and $2\leq p\leq\infty$.}
\begin{lemma}\label{Le-5}
	Let $1\leq r\leq2$, $2\leq p\leq\infty$, and $R\geq1$. Suppose that
	$f\in\mathcal{S}(\mathbb{R}^{2})$ satisfies
	$$\operatorname{supp}\mathcal{F}_x(f)\subset\left\{\xi\in\mathbb{R}^{2}:\frac{1}{2}\leq|\xi|\leq 2 \right\}.$$
	Then
	\begin{eqnarray}\label{eq--20}
		\left\|V^{r}_{[0,R]}\{e^{it\Delta}f\}\right\|_{L^{p}(B^{2}(0,R))}&\lesssim_{p,r}&\left\{\begin{array}{ll}R^{\frac{1}{r}-\frac{3}{4}+\frac{3}{2p}}\|f\|_{2},& 2\leq p\leq6,\\
		R^{\frac{1}{r}-\frac{1}{2}}\|f\|_{2},& 6\leq p\leq\infty.
		\end{array}\right.
	\end{eqnarray}
\end{lemma}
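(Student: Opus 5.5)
Starting from the pointwise bound \eqref{eq-6} and the reproducing argument behind \eqref{eq--11}, we reduce to a square function. For every $x$,
$$
V^{r}_{[0,R]}\{e^{it\Delta}f(x)\}\lesssim\Big(\sum_{m}\big[\sup_{t\in I_m}(|e^{it\Delta}f(x)|+|\partial_te^{it\Delta}f(x)|)\big]^{r}\Big)^{1/r},
$$
where $m$ ranges over the $O(R)$ indices with $I_m\cap[0,R]\neq\varnothing$. Since $r\le 2$, H\"older's inequality in $m$ over $O(R)$ terms gives $\ell^r_m\le (CR)^{\frac1r-\frac12}\ell^2_m$. Hence
$$
V^{r}_{[0,R]}\{e^{it\Delta}f(x)\}\lesssim R^{\frac1r-\frac12}\Big(\sum_{m}\big[\sup_{t\in I_m}(|e^{it\Delta}f(x)|+|\partial_te^{it\Delta}f(x)|)\big]^{2}\Big)^{1/2}.
$$
This is exactly the quantity estimated in the proof of Lemma \ref{Le-3}. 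By \eqref{eq--11}–\eqref{eq--13}, it is dominated by
$$
\Big(\int_{\mathbb R}\Big(\sum_m(1+|s-m|)^{-M}\Big)|e^{is\Delta}f(x)|^2ds\Big)^{1/2}.
$$

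We then take the $L^p(B^2(0,R))$ norm and use the bounds already obtained in the proof of Lemma \ref{Le-3}. For $2\le p\le 6$, the interpolation \eqref{eq-add1} between the $L^2_x$ bound $R^{1/2}$ from \eqref{eq--14} and the $L^6_x$ bound $O(1)$ from \eqref{eq-add2} gives $R^{-\frac14+\frac{3}{2p}}\|f\|_2$. For $p\ge 6$, \eqref{eq-add4}, which combines \eqref{eq-add2} with Bernstein's inequality, gives $O(\|f\|_2)$. Multiplying by the factor $R^{\frac1r-\frac12}$ yields $R^{\frac1r-\frac34+\frac{3}{2p}}$ for $2\le p\le6$ and $R^{\frac1r-\frac12}$ for $p\ge6$, which is \eqref{eq--20}.

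The only point needing care is the H\"older step. It is applied pointwise in $x$, before taking the spatial norm, and it is valid because the number of unit intervals $I_m$ meeting $[0,R]$ is $\lesssim R$. Everything else is inherited from Lemma \ref{Le-3}, so this step is the main one.

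As a consistency check, the case $r=2$ recovers the bound of Lemma \ref{Le-3}. Indeed, $\max\{-\frac14+\frac3{2p},0\}$ equals $-\frac14+\frac{3}{2p}$ for $p\le 6$ and $0$ for $p\ge6$, matching both branches of \eqref{eq--20}. There is no $\varepsilon$-loss, since no fractal or maximal estimate enters.
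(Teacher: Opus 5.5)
Your proposal is correct and is essentially the paper's own proof. It uses the same three steps: the pointwise bound \eqref{eq-6}; the H\"older step $\ell^r_m\le (CR)^{\frac1r-\frac12}\ell^2_m$ over the $O(R)$ unit intervals (the paper's \eqref{eq--22}); and the square-function bounds from the proof of Lemma~\ref{Le-3}, namely \eqref{eq-add1} for $2\le p\le 6$ and \eqref{eq-add4} for $p\ge 6$, multiplied by $R^{\frac1r-\frac12}$.
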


\begin{proof}
	By Lemma \ref{le--3},
	\begin{eqnarray}\label{eq--21}
		V^{r}_{[0,R]}\{e^{it\Delta}f(x)\}&\lesssim_{r}&\left(\sum_{\substack{m\in\mathbb{Z}\\I_m\cap[0,R]\neq\varnothing}}\left[\sup_{t\in I_m}\left(\left|e^{it\Delta}f(x)\right|+\left|\partial_t e^{it\Delta}f(x)\right|\right)\right]^{r}\right)^{1/r}.
	\end{eqnarray}
	There are at most $O(R)$ nonempty intervals in \eqref{eq--21}. Since $1\leq r\leq2$, the finite-dimensional embedding $\ell^{2}\subset\ell^{r}$ gives
	\begin{eqnarray}\label{eq--22}
		&&\left(\sum_{\substack{m\in\mathbb{Z}\\I_m\cap[0,R]\neq\varnothing}}\left[\sup_{t\in I_m}\left(\left|e^{it\Delta}f(x)\right|+\left|\partial_t e^{it\Delta}f(x)\right|\right)\right]^{r}\right)^{1/r}
		\notag\\
		&\lesssim_{r}&R^{\frac{1}{r}-\frac{1}{2}}\left(\sum_{\substack{m\in\mathbb{Z}\\I_m\cap[0,R]\neq\varnothing}}\left[\sup_{t\in I_m}\left(\left|e^{it\Delta}f(x)\right|+\left|\partial_t e^{it\Delta}f(x)\right|\right)\right]^{2}\right)^{1/2}.
	\end{eqnarray}
	Combining \eqref{eq--21} and \eqref{eq--22}, and then take the
	$L^{p}$ norm over $B^{2}(0,R)$, we obtain
	\begin{eqnarray}\label{eq--23}
		\left\|V^{r}_{[0,R]}\{e^{it\Delta}f\}\right\|_{L^{p}(B^{2}(0,R))}&\lesssim&R^{\frac{1}{r}-\frac{1}{2}}	\left\|\left(
		\sum_{\substack{m\in\mathbb{Z}\\I_m\cap[0,R]\neq\varnothing}}\left[\sup_{t\in I_m}\left(\left|e^{it\Delta}f\right|+\left|\partial_t e^{it\Delta}f\right|\right)\right]^{2}\right)^{1/2}
		\right\|_{L^{p}(B^{2}(0,R))}.
	\end{eqnarray}
  	For $2\leq p$,	we apply the estimates of \eqref{eq-add1}  and \eqref{eq-add4} to obtain 
	\begin{eqnarray}\label{eq--24-1}
		&&\left\|\left(\sum_{\substack{m\in\mathbb{Z}\\I_m\cap[0,R]\neq\varnothing}}\left[\sup_{t\in I_m}\left(\left|e^{it\Delta}f\right|+\left|\partial_t e^{it\Delta}f\right|\right)\right]^{2}\right)^{1/2}\right\|_{L^{p}(B^{2}(0,R))}\lesssim R^{\max\{-\frac{1}{4}+\frac{3}{2p},0\}}\|f\|_{2}
	\end{eqnarray}
Therefore, substituting \eqref{eq--24-1} into \eqref{eq--23} gives
	\begin{eqnarray*}
		\left\|V^{r}_{[0,R]}\{e^{it\Delta}f\}\right\|_{L^{p}(B^{2}(0,R))}&\lesssim_{p,r}&
		\left\{\begin{array}{ll}
		R^{\frac{1}{r}-\frac{3}{4}+\frac{3}{2p}}
		\|f\|_{2},& 2\leq p\leq6,\\
		R^{\frac{1}{r}-\frac{1}{2}}\|f\|_{2},& 6\leq p\leq\infty,
		\end{array}\right.
	\end{eqnarray*}
which finish the proof of $\eqref{eq--20}$.
\end{proof}

\subsection{The range $3<p<6$ and $2<r<\infty$}

\begin{lemma}\label{Le-7}
	Let $3<p<6$, $2<r<\infty$, $R\geq 1$, and let
	$f\in\mathcal{S}(\mathbb{R}^{2})$ satisfy
	$$\operatorname{supp}\mathcal{F}_x(f)\subset\left\{\xi\in\mathbb{R}^{2}:\frac{1}{2}\leq |\xi|\leq 2\right\}.$$
	Then, for every $\varepsilon>0$,
	\begin{eqnarray}\label{eq--25}
		\left\|V^{r}_{[0,R]}\{e^{it\Delta}f\}\right\|_{L^{p}(B^{2}(0,R))}&\lesssim&R^{\max\left\{0,-\frac{1}{2}+\frac{3}{2p}+\frac{1}{2r}\right\}+\varepsilon}\|f\|_{2}.
	\end{eqnarray}
\end{lemma}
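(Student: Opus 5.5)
The plan is to obtain the bound in two regimes, split at the curve $r=\frac{p}{p-3}$ shown in Figure~\ref{fig:admissible-region}. Set $q'_p:=\frac{p}{p-3}$, so that $\frac{3}{p}+\frac{1}{q'_p}=1$; note $q'_p>2$ because $3<p<6$. The claimed exponent $\max\{0,-\frac12+\frac{3}{2p}+\frac1{2r}\}$ vanishes exactly when $\frac1r\le 1-\frac3p$, i.e. when $r\ge q'_p$.

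\emph{Regime $r\ge q'_p$ (aim: an $O(R^{\varepsilon})$ bound).} First I would use only the time reproducing formula from the proof of Lemma~\ref{Le-3}, applied at each fixed $x$ with no spatial averaging. It gives, for $t\in I_m$,
$$\sup_{t\in I_m}\bigl(|e^{it\Delta}f(x)|+|\partial_te^{it\Delta}f(x)|\bigr)\lesssim\int_{\mathbb R}(1+|s-m|)^{-M}|e^{is\Delta}f(x)|\,ds .$$
I would insert this into \eqref{eq-6} and apply Young's inequality for the discrete/continuous convolution in $m$, obtaining
$$V^r_{[0,R]}\{e^{it\Delta}f(x)\}\lesssim \bigl\|e^{is\Delta}f(x)\bigr\|_{L^r_s(\mathbb R)} .$$
Since $r\ge q'_p$, the weight $(1+|s-m|)^{-M}$ could even be used to localize to $|s|\lesssim R$ at a cost of $O(R^{-1000})\|f\|_2$, and H\"older in $s$ would then cost only $R^{\varepsilon}$. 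However, no localization is needed: $\ell^{q'_p}\subset\ell^r$ applied to the sequence of local suprema gives $V^r\le V^{q'_p}$ pointwise. It therefore suffices to handle $r=q'_p$, and there I would invoke the sharp global mixed-norm estimate of Li--Miao--Yu \cite{LMY2024} on the critical line $\frac3p+\frac1q=1$ with $3<p<6$. With data in $\dot H^{1-2/p-2/q}$ and $|\xi|\approx1$, it yields $\|e^{it\Delta}f\|_{L^p_xL^{q'_p}_t}\lesssim\|f\|_2$. Hence $\|V^r_{[0,R]}\{e^{it\Delta}f\}\|_{L^p(B^2(0,R))}\lesssim\|f\|_2$ for all $r\ge q'_p$.

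\emph{Regime $2<r<q'_p$.} Here I would argue exactly as in Lemma~\ref{Le-4}, with $q_p$ replaced by $q'_p$. Choose $\theta\in(0,1)$ with $\frac1r=\frac\theta2+(1-\theta)\frac1{q'_p}$. H\"older's inequality gives $V^r\le (V^2)^\theta(V^{q'_p})^{1-\theta}$ pointwise, and then H\"older in $x$ gives
$$\|V^r\|_{L^p}\le\|V^2\|_{L^p}^\theta\,\|V^{q'_p}\|_{L^p}^{1-\theta}.$$
For the first factor I would use Lemma~\ref{Le-3}, which gives $R^{\theta(-\frac14+\frac3{2p})}$ because $p<6$; the second factor is $O(1)$ by the first regime. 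It remains to check that the exponent is right. The map $\frac1r\mapsto\theta(-\frac14+\frac3{2p})$ is affine: it equals $-\frac14+\frac3{2p}$ at $\frac1r=\frac12$, vanishes at $\frac1r=1-\frac3p$, and so has slope $\frac{3/(2p)-1/4}{3/p-1/2}=\frac12$. It therefore coincides with $-\frac12+\frac3{2p}+\frac1{2r}$, which is \eqref{eq--25}; the $R^\varepsilon$ simply absorbs constants and logarithms.

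\emph{Expected main obstacle.} The delicate point is the first regime: correctly transferring the Li--Miao--Yu estimate, which is stated globally in time and for homogeneous Sobolev data, to the frequency-localized variation. Concretely, one must confirm that the Schwartz-tail convolution in $m$ is bounded $L^{q'_p}_s\to\ell^{q'_p}_m$ uniformly, and that the application of Young's inequality is legitimate. If the cited theorem were available only in a local form, for instance on $B^2(0,R)\times[0,R]$ with an $R^\varepsilon$ loss, I would instead first truncate in time using the $O(R^{-1000})$ tail argument of Lemma~\ref{le--3}.
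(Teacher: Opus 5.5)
Your proposal is correct and matches the paper's proof. It uses the same split at $q'_p=p/(p-3)$, with monotonicity plus the Li--Miao--Yu critical-line bound $\|e^{it\Delta}f\|_{L^p_xL^{q'_p}_t}\lesssim\|f\|_2$ for $r\ge q'_p$, and the pointwise interpolation $V^r\le (V^2)^{\theta}(V^{q'_p})^{1-\theta}$ combined with Lemma~\ref{Le-3} for $2<r<q'_p$.

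The only deviation is how you pass from the mixed norm to $V^{q'_p}$. The paper uses the Jones--Seeger--Wright inequality $V^{q}(F)\lesssim\|F\|_{L^q}^{1-1/q}\|F'\|_{L^q}^{1/q}$ \cite{JSW2008}, together with the same mixed-norm bound for $\partial_t e^{it\Delta}f$. You instead use \eqref{eq-6}, the temporal reproducing formula and discrete Young's inequality; this is equally valid and avoids the derivative estimate. Your aside about localizing to $|s|\lesssim R$ and applying H\"older in $s$ is unnecessary and not quite right, but you rightly discard it.
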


\begin{proof}
	Define the critical variation exponent
	\begin{eqnarray}\label{eq--26}
		q_{p}'=\frac{p}{p-3},\qquad\frac{1}{q_{p}'}=1-\frac{3}{p}.
	\end{eqnarray}
	Since $3<p<6$, it follows that $q_{p}'>2$. We use two estimates established previously. First, the $2$-variation estimate gives
	\begin{eqnarray}\label{eq--27}
		\left\|V^{2}_{[0,R]}\{e^{it\Delta}f\}\right\|_{L^{p}(B^{2}(0,R))}&\lesssim_{p,\varepsilon}&R^{-\frac{1}{4}+\frac{3}{2p}+\varepsilon}\|f\|_{2}.
	\end{eqnarray}
	Second, by \cite[Theorem 1.6]{LMY2024}, together with the Fourier support of $f$, gives
	$$\|e^{it\Delta}f\|_{L_x^pL_t^{q_{p}'}(\mathbb{R}^2\times\mathbb{R})}\lesssim_p\|f\|_{\dot H^{4/p-1}(\mathbb{R}^2)}\lesssim_p \|f\|_2.$$
	Applying the same estimate to $i\Delta f$ and using $\partial_t e^{it\Delta}f=e^{it\Delta}(i\Delta f)$, we also obtain
$$\|\partial_t e^{it\Delta}f\|_{L_x^pL_t^{q_{p}'}(\mathbb{R}^2\times\mathbb{R})}\lesssim_p\|\Delta f\|_{\dot H^{4/p-1}(\mathbb{R}^2)}\lesssim_p \|f\|_2.$$
	
For almost every fixed $x$, the variation inequality\cite[(39)]{JSW2008}, applied to the function $t\mapsto e^{it\Delta}f(x)$, yields
	$$V^{q_{p}'}_{[0,R]}\{e^{it\Delta}f(x)\}\lesssim_p\|e^{it\Delta}f(x)\|_{L_t^{q_{p}'}(\mathbb{R})}^{1-1/q_{p}'}\|\partial_t e^{it\Delta}f(x)\|_{L_t^{q_{p}'}(\mathbb{R})}^{1/q_{p}'}.$$
	Take the $L^p$ norm in $x$ and applying H\"older's inequality,
	we conclude that
	\begin{eqnarray}\label{eq--28}
		\left\|V^{q_p'}_{[0,R]}\{e^{it\Delta}f\}\right\|_{L^p(B^2(0,R))}&\lesssim_p&\|e^{it\Delta}f\|_{L_x^pL_t^{q_p'}(\mathbb{R}^{3})}^{1-1/q_p'}\|\partial_t e^{it\Delta}f\|_{L_x^pL_t^{q_p'}(\mathbb{R}^{3})}^{1/q_p'}
		\notag\\
		&\lesssim_p& \|f\|_2.
	\end{eqnarray}
	We divide the argument into two cases: 
	\medskip\noindent
	\textbf{Case 1: $2<r<q_{p}'$.}	Choose $0<\theta<1$ such that
	\begin{eqnarray}\label{eq--29}
		\frac{1}{r}=\frac{\theta}{2}+\frac{1-\theta}{q_{p}'}.
	\end{eqnarray}
	For any finite increasing sequence
	$$0\leq t_{0}<t_{1}<\cdots<t_{M}\leq R,$$
	apply the interpolation inequality between $\ell^{2}$ and $\ell^{q_{p}'}$ to the corresponding increment sequence. Take the supremum over all such sequences yields	the pointwise variation inequality
	\begin{eqnarray*}
		V^{r}_{[0,R]}\{e^{it\Delta}f(x)\}&\leq&\left(V^{2}_{[0,R]}\{e^{it\Delta}f(x)\}\right)^{\theta}\left(V^{q_{p}'}_{[0,R]}\{e^{it\Delta}f(x)\}\right)^{1-\theta}.
	\end{eqnarray*}
	Applying H\"older's inequality in the spatial variable and then
	using \eqref{eq--27} and \eqref{eq--28}, we obtain
	\begin{eqnarray}\label{eq--30}
		\left\|V^{r}_{[0,R]}\{e^{it\Delta}f\}\right\|_{L^{p}(B^{2}(0,R))}&\leq&\left\|V^{2}_{[0,R]}\{e^{it\Delta}f\}\right\|_{L^{p}(B^{2}(0,R))}^{\theta}\left\|V^{q_{p}'}_{[0,R]}\{e^{it\Delta}f \}\right\|_{L^{p}(B^{2}(0,R))}^{1-\theta}\notag\\
		&\lesssim&R^{\theta\left(-\frac{1}{4}+\frac{3}{2p}\right)+\varepsilon}\|f\|_{2}.
	\end{eqnarray}

	It remains to simplify the exponent of $R$. By \eqref{eq--26},
	\begin{eqnarray*}
		-\frac{1}{4}+\frac{3}{2p}&=&\frac{1}{2}\left(\frac{1}{2}-\frac{1}{q_{p}'}\right).
	\end{eqnarray*}
	Moreover, \eqref{eq--29} implies
	\begin{eqnarray*}
		\theta\left(\frac{1}{2}-\frac{1}{q_{p}'}\right)&=&\frac{1}{r}-\frac{1}{q_{p}'}.
	\end{eqnarray*}
	Consequently,
	\begin{eqnarray}\label{eq--31}
		\theta\left(-\frac{1}{4}+\frac{3}{2p}\right)=\frac{1}{2}\left(\frac{1}{r}-\frac{1}{q_{p}'}\right)=-\frac{1}{2}+\frac{3}{2p}+\frac{1}{2r}.
	\end{eqnarray}
	Substituting \eqref{eq--31} into \eqref{eq--30} gives
	\begin{eqnarray*}
		\left\|V^{r}_{[0,R]}\{e^{it\Delta}f\}\right\|_{L^{p}(B^{2}(0,R))}&\lesssim_{p,r,\varepsilon}&R^{-\frac{1}{2}+\frac{3}{2p}+\frac{1}{2r}+\varepsilon}\|f\|_{2},\qquad 2<r<q_{p}'.
	\end{eqnarray*}

	\medskip
	\noindent
	\textbf{Case 2: $r\geq q_{p}'$.}

	The monotonicity of variation seminorms gives
	$$V^{r}_{[0,R]}\{e^{it\Delta}f(x)\}\leq V^{q_{p}'}_{[0,R]}\{e^{it\Delta}f(x)\}.$$
	Therefore, \eqref{eq--28} implies
	$$\left\|V^{r}_{[0,R]}\{e^{it\Delta}f\}\right\|_{L^{p}(B^{2}(0,R))}\lesssim_{p,r,\varepsilon}R^{\varepsilon}\|f\|_{2}.$$
	Notice that $r\geq q_{p}'$ is equivalent to
	$$-\frac{1}{2}+\frac{3}{2p}+\frac{1}{2r}\leq 0.$$
	Combining the two cases proves \eqref{eq--25}.
\end{proof}

\subsection{The endpoint ranges $p=3$ and $p\geq 6$}

\begin{lemma}\label{Le-6}
	Let $2<r\leq\infty$, $R\geq1$, and let
	$f\in\mathcal{S}(\mathbb{R}^{2})$ satisfy
	$$\operatorname{supp}\mathcal{F}_x(f) \subset\left\{\xi\in\mathbb{R}^{2}:\frac{1}{2}\leq|\xi|\leq2\right\}.$$
	Then, for every $\varepsilon>0$,
	\begin{eqnarray}\label{eq--32}
		\left\|V^{r}_{[0,R]}\{e^{it\Delta}f\}\right\|_{L^{p}(B^{2}(0,R))}&\lesssim_{p,r,\varepsilon}&
	    \left\{\begin{array}{ll}
			R^{\frac{1}{2r}+\varepsilon}\|f\|_{2},& p=3,\\
			\|f\|_{2},& 6\leq p\leq\infty.
		\end{array}\right.
	\end{eqnarray}
\end{lemma}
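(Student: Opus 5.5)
The plan splits into the two ranges in \eqref{eq--32}. In both, the basic tool is the pointwise interpolation inequality between variation norms already used in Lemmas \ref{Le-4} and \ref{Le-7}. If $\frac1r=\frac{\theta}{2}+\frac{1-\theta}{q}$, then $V^r\le (V^2)^\theta (V^q)^{1-\theta}$. Combined with H\"older in $x$, this reduces everything to a $V^2$ bound from Lemma \ref{Le-3} and an endpoint bound at a larger exponent $q$.

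\textbf{Case $p=3$.} Here we take $q=\infty$, so $\theta=2/r$. Lemma \ref{Le-3} with $p=3$ gives
\[
\|V^2_{[0,R]}\{e^{it\Delta}f\}\|_{L^3(B^2(0,R))}\lesssim R^{\frac14}\|f\|_2 .
\]
For the $V^\infty$ endpoint, the definition gives $V^\infty_{[0,R]}=2\sup_{t\in[0,R]}|e^{it\Delta}f|$. Lemma \ref{le--2} (or Lemma \ref{co--1}) then gives the bound $R^{\varepsilon}\|f\|_2$ in $L^3(B^2(0,R))$. Since $V^r\le (V^2)^{2/r}(V^\infty)^{1-2/r}$ pointwise, H\"older's inequality in $L^3$ with exponents $\frac{r}{2}$ and $\frac{r}{r-2}$ yields
\[
\|V^r\|_{L^3}\le \|V^2\|_{L^3}^{2/r}\,\|V^\infty\|_{L^3}^{1-2/r}\lesssim R^{\frac{1}{4}\cdot\frac{2}{r}+\varepsilon}\|f\|_2=R^{\frac{1}{2r}+\varepsilon}\|f\|_2 .
\]
The case $r=\infty$ is exactly Lemma \ref{le--2}.

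\textbf{Case $6\le p\le\infty$.} By monotonicity of the variation seminorms, $V^r\le V^2$ for $r>2$. Estimate \eqref{eq-add4}, obtained from the Li--Miao--Yu $L^6_xL^2_t$ estimate \eqref{eq-add2} together with Bernstein's inequality, gives $\|V^2_{[0,R]}\{e^{it\Delta}f\}\|_{L^p(B^2(0,R))}\lesssim\|f\|_2$ for $p\ge 6$, uniformly in $R$. This covers every $2<r\le\infty$, including $r=\infty$, since $\sup_t|e^{it\Delta}f|\le |e^{i0\Delta}f|+V^2$ and $\|f\|_{L^p}\lesssim\|f\|_2$ by Bernstein.

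\textbf{Main obstacle.} The delicate point is the $p=\infty$ (and large $p$) endpoint of \eqref{eq-add4}. Bernstein's inequality must be applied in $x$ to the $L^2_t$-valued function $e^{it\Delta}f(x)$, which has compact spatial frequency support. This requires a vector-valued Bernstein inequality, i.e.\ Minkowski's inequality applied to the reproducing formula with a Schwartz kernel. It also requires checking that the local-sup control \eqref{eq--11} is compatible with that step. I would verify this by writing $e^{it\Delta}f=e^{it\Delta}f*_x\check\phi$ and using $\|G*\check\phi\|_{L^p_xL^2_t}\le\|\check\phi\|_{L^{r_0}}\|G\|_{L^6_xL^2_t}$ with $1+\frac1p=\frac1{r_0}+\frac16$.
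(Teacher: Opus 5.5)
Your proposal is correct and follows the paper's proof essentially step for step. For $p=3$ you interpolate pointwise between the $V^2$ bound $R^{1/4}$ from Lemma \ref{Le-3} and the Du--Guth--Li maximal bound with $\theta=2/r$, and for $p\ge 6$ you use $V^r\le V^2$ together with \eqref{eq-add4}. Your separate handling of $r=\infty$ when $p\ge6$, via $\sup_t|e^{it\Delta}f|\le|f|+V^2$ and Bernstein, is slightly more careful than the paper: its monotonicity claim does not literally cover the convention $V^\infty=2\sup_t|\cdot|$.
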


\begin{proof}
	We first consider $p=3$. The previously established
	$2$-variation estimate and the sharp maximal estimate give,
	respectively,
	\begin{eqnarray}\label{eq--33}
		\left\{\begin{array}{ll}
			\left\|V^{2}_{[0,R]}\{e^{it\Delta}f\}\right\|_{L^{3}(B^{2}(0,R))}\lesssim_{\varepsilon}R^{\frac{1}{4}+\varepsilon}\|f\|_{2};\\
			\left\|\sup_{t\in [0,R]}|e^{it\Delta}f|
			\right\|_{L^{3}(B^{2}(0,R))}\lesssim_{\varepsilon}R^{\varepsilon}\|f\|_{2},
		\end{array}\right.
	\end{eqnarray}
	where maximal estimate is given by \cite[Theorem 1.3]{DGL2017}. The second estimate follows from the sharp Schr\"odinger maximal
	estimate proved previously, together with
	$$V^{\infty}_{[0,R]}\{e^{it\Delta}f(x)\}=2\sup_{0\leq t\leq R}
	\left|e^{it\Delta}f(x)\right|.$$

	For $r>2$, set
	\begin{eqnarray*}
		\theta=\frac{2}{r},\qquad\frac{1}{r}=\frac{\theta}{2}+\frac{1-\theta}{\infty}.
	\end{eqnarray*}
	As in the proof above, interpolation of the increment sequence
	gives
	$$V^{r}_{[0,R]}\{e^{it\Delta}f(x)\}\lesssim\left(V^{2}_{[0,R]}\{e^{it\Delta}f(x)\}\right)^{\frac{2}{r}}\left(\sup_{t\in [0,R]}|e^{it\Delta}f(x)|\right)^{1-\frac{2}{r}}.$$
	Using H\"older's inequality in $x$ and \eqref{eq--33}, we
	obtain
	\begin{eqnarray*}
		\left\|V^{r}_{[0,R]}\{e^{it\Delta}f\}\right\|_{L^{3}(B^{2}(0,R))}\lesssim_{r,\varepsilon}\left(R^{\frac{1}{4}+\varepsilon}\|f\|_{2}\right)^{\frac{2}{r}}\left(R^{\varepsilon}\|f\|_{2}\right)^{1-\frac{2}{r}}\lesssim R^{\frac{1}{2r}+\varepsilon}\|f\|_{2}.
	\end{eqnarray*}
	This proves the first estimate in \eqref{eq--32}.

	We next consider $6\leq p\leq\infty$. The $2$-variation estimate
	proved in the preceding subsection gives
	\begin{eqnarray}\label{eq--34}
		\left\|V^{2}_{[0,R]}\{e^{it\Delta}f\}\right\|_{L^{p}(B^{2}(0,R))}&\lesssim_{p}&\|f\|_{2}.
	\end{eqnarray}
	Since $r>2$, monotonicity of the variation seminorm implies
$$V^{r}_{[0,R]}\{e^{it\Delta}f(x)\}\leq V^{2}_{[0,R]}\{e^{it\Delta}f(x)\}.$$
	Combining this pointwise inequality with \eqref{eq--34}, we obtain
	\begin{eqnarray*}
		\left\|V^{r}_{[0,R]}\{e^{it\Delta}f\}\right\|_{L^{p}(B^{2}(0,R))}&\lesssim_{p,r}&\|f\|_{2},\qquad6\leq p\leq\infty.
	\end{eqnarray*}
	This proves the second estimate in \eqref{eq--32}.
\end{proof}

\section{The proof of the sufficiency part in Theorem \ref{Th-1}}
    In this section, we establish the Theorem \ref{Th-3} that allows us to deduce global variational boundedness from local variational boundedness. Combining this result with the local estimates established in Lemmas \ref{Le-2}-  \ref{Le-6} of Section 3, we prove the sufficiency part of our main theorem. The subsequent spatial globalization follows the near--far localization method in   Lee,   Rogers and  Seeger \cite{LRS2013,LRV2011,R2008}. Since these references concern maximal Schrödinger operator, we give the details needed for the variation norm.

\begin{theorem}\label{Th-3}
	Let $2\leq p\leq\infty$, $1\leq r<\infty$, and $\beta\geq0$.
	Suppose that, for every $\varepsilon>0$, every $R\geq1$, and every
	$g\in\mathcal S(\mathbb{R}^2)$ satisfying
	\begin{eqnarray}\label{eq--35}
		\operatorname{supp}\mathcal{F}_x(g)\subset\left\{\xi\in\mathbb{R}^2:
		\frac{1}{2}\leq|\xi|\leq2\right\},
	\end{eqnarray}
	one has
	\begin{eqnarray}\label{eq--36}
		\left\|V^r_{[0,R]}\left\lbrace e^{it\Delta}g\right\rbrace\right\|_{L^p(B^2(0,R))}
		\lesssim_{p,r,\varepsilon}R^{\beta+\varepsilon}\|g\|_2.
	\end{eqnarray}
     Then, for every
	\begin{eqnarray*}
		s>1-\frac{2}{p}+2\beta,
	\end{eqnarray*}
	one has
	\begin{eqnarray}\label{eq--37}
		\left\|V^r_{[0,1]}\left\lbrace e^{it\Delta}f\right\rbrace \right\|_{L^p(\mathbb{R}^2)}\lesssim_{p,r,s}\|f\|_{H^s(\mathbb{R}^2)}.
	\end{eqnarray}
\end{theorem}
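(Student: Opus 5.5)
The proof has three steps: a Littlewood--Paley decomposition with parabolic rescaling, a near--far spatial localization of each dyadic piece, and a final summation over dyadic frequencies.

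\textbf{Step 1 (frequency decomposition and rescaling).} Write $f=P_{\le 1}f+\sum_{N\ge 2\text{ dyadic}}P_Nf$ with $\operatorname{supp}\mathcal F_x(P_Nf)\subset\{N/2\le|\xi|\le 2N\}$. The variation seminorm is subadditive, so
$$\|V^r_{[0,1]}\{e^{it\Delta}f\}\|_{L^p(\mathbb R^2)}\le\sum_N\|V^r_{[0,1]}\{e^{it\Delta}P_Nf\}\|_{L^p(\mathbb R^2)}.$$
Because $s$ is strictly above the threshold, it suffices to prove, for each fixed $N$,
$$\|V^r_{[0,1]}\{e^{it\Delta}P_Nf\}\|_{L^p(\mathbb R^2)}\lesssim_\varepsilon N^{1-2/p+2\beta+\varepsilon}\|P_Nf\|_2.$$
The gain $N^{s-(1-2/p+2\beta)-\varepsilon}$ is then summable over $N$.

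The low-frequency piece is handled directly. By \eqref{eq-6} and a Bernstein/Sobolev bound in $(x,t)$ on $[0,1]$, its variation is controlled by a local maximal function of $e^{it\Delta}P_{\le1}f$ and $\partial_te^{it\Delta}P_{\le1}f$, and this lies in $L^p$ for $p\ge2$.

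For $N\ge2$, set $g(x)=P_Nf(x/N)$ and $R=N^2$. Then $\operatorname{supp}\mathcal F_x(g)\subset\{1/2\le|\xi|\le2\}$ and $e^{it\Delta}P_Nf(x)=e^{iN^2t\Delta}g(Nx)$. The variation is invariant under the time reparametrization $t\mapsto N^2t$, so
$$V^r_{[0,1]}\{e^{it\Delta}P_Nf\}(x)=V^r_{[0,N^2]}\{e^{it\Delta}g\}(Nx),$$
and consequently
$$\|V^r_{[0,1]}\{e^{it\Delta}P_Nf\}\|_{L^p(\mathbb R^2)}=N^{-2/p}\|V^r_{[0,N^2]}\{e^{it\Delta}g\}\|_{L^p(\mathbb R^2)},\qquad \|g\|_2=N\|P_Nf\|_2.$$
The goal therefore becomes the global-in-space bound
$$\|V^r_{[0,R]}\{e^{it\Delta}g\}\|_{L^p(\mathbb R^2)}\lesssim R^{\beta+\varepsilon}\|g\|_2,$$
which gives exactly $N^{-2/p}\cdot N^{2\beta+2\varepsilon}\cdot N\|P_Nf\|_2$.

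\textbf{Step 2 (passing from $B^2(0,R)$ to $\mathbb R^2$).} Cover $\mathbb R^2$ by a lattice of balls $B_k=B^2(x_k,R)$. Fix a Schwartz cutoff $\chi_k$ adapted to $B^2(x_k,CR^{1+\delta})$, with $\delta$ small, and split $g=g\chi_k+g(1-\chi_k)$ after reprojecting each piece to the frequency annulus. The reprojection has Schwartz tails, and its loss is harmless.

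\emph{Near part.} By translation invariance, hypothesis \eqref{eq--36} applies on $B_k$ and gives $R^{\beta+\varepsilon}\|g\chi_k\|_2$. Summing the $p$-th powers over $k$ and using $p\ge2$ together with the bounded overlap of the enlarged balls (overlap count $R^{2\delta}$), we get
$$\Big(\sum_k\|g\chi_k\|_2^p\Big)^{1/p}\le\Big(\sum_k\|g\chi_k\|_2^2\Big)^{1/2}\lesssim R^{\delta}\|g\|_2.$$
Choosing $\delta\ll\varepsilon$ absorbs this loss.

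\emph{Far part.} For $x\in B_k$ and $|t|\le R$, the kernel of $e^{it\Delta}$ frequency-localized to $|\xi|\approx1$ decays like $O((|x-y|)^{-M})$ once $|x-y|\ge CR^{1+\delta}\gg R\gtrsim|t|$. This follows by non-stationary phase, since the group velocity satisfies $|2t\xi|\lesssim R$. The same decay holds for $\partial_t$ of the kernel. Using the pointwise bound \eqref{eq-6}, the far part of the variation is controlled by $R\cdot R^{-M}$ times a convolution of $|g|$ with an integrable tail. Summing over $k$ then gives $O(R^{-100})\|g\|_2$.

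\textbf{Main obstacle.} The hardest step is the far part of Step 2. One must check that the variation norm, not just the supremum, of the far contribution is negligible. I would handle this through \eqref{eq-6}, which reduces the variation to $\ell^r_m$ of local suprema of $|u|+|\partial_tu|$. That sum runs over only $O(R)$ values of $m$, so it costs at most a factor $R$, and this is absorbed by the rapid kernel decay.

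The other delicate point is the dyadic summation in Step 1 with the $\varepsilon$-losses. This is harmless precisely because the inequality $s>1-2/p+2\beta$ is strict.
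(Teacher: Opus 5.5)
Your proposal is correct and is essentially the paper's argument. It uses parabolic rescaling to transfer the hypothesis, then a Lee--Rogers--Vargas near/far spatial localization: the local hypothesis plus $\ell^2\subset\ell^p$ on the near part, and non-stationary phase together with \eqref{eq-6} on the far part. It finishes with a Littlewood--Paley summation that uses the strict inequality $s>1-2/p+2\beta$.

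The only differences are cosmetic. You rescale first and localize at unit frequency on $R$-balls with $R^{1+\delta}$-enlarged cutoffs, paying an $R^{\delta}$ overlap loss that $R^{\varepsilon}$ absorbs. The paper instead rescales the hypothesis to frequency $R$ and localizes on $R$-cubes with only $O(1)$ neighbouring cubes in the near part. That version is lossless because the propagation distance over the time interval is already comparable to the cube size.
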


\begin{proof}
	First, we apply a Littlewood--Paley decomposition and parabolic rescaling to the $r$-variation Schr\"odinger operator. 	Define $g(x):=R^{-1}g_R(x/R).$ For $R\geq1$, we can obtain  $g_R\in\mathcal S(\mathbb{R}^2)$, $\operatorname{supp}\mathcal{F}_x(g_R)\subset\left\{\xi\in\mathbb{R}^2:\frac {R}{2}\leq|\xi|\leq2R\right\}$ and $\mathcal{F}_x(g)(\xi)=R\mathcal{F}_x(g_R)(R\xi)$. Moreover, $\|g\|_2^2=\|g_R\|_2^2.$ A direct change of variables in the Schr\"odinger operator gives
	\begin{eqnarray*}
		e^{it\Delta}g_R(x)=R\left[e^{iR^2t\Delta}g\right](Rx).
	\end{eqnarray*}
	Hence, we have
	\begin{eqnarray*}
		V^r_{[0,1]}\left\lbrace e^{it\Delta}g_R(x)	\right\rbrace =R V^r_{[0,R^2]}\left\lbrace e^{it\Delta}g(Rx)\right\rbrace.
	\end{eqnarray*}
	If $x\in B^2(0,R)$, then $Rx\in B^2(0,R^2)$. Then, we obtain
	\begin{eqnarray*}
		\left\|V^r_{[0,1]}\left\lbrace 	e^{it\Delta}g_R\right\rbrace \right\|_{L^p(B^2(0,R))}=R^{1-2/p}\left\|V^r_{[0,R^2]}\left\lbrace e^{it\Delta}g\right\rbrace \right\|_{L^p(B^2(0,R^2))}.
	\end{eqnarray*}
	We now apply the assumed estimate \eqref{eq--36} with $R$ replaced by $R^2$. Thus,
	\begin{eqnarray*}
		\left\|V^r_{[0,1]}\left\lbrace e^{it\Delta}g_R\right\rbrace\right\|_{L^p(B^2(0,R))}&=&R^{1-2/p}\left\|V^r_{[0,R^2]}\left\lbrace e^{it\Delta}g\right\rbrace\right\|_{L^p(B^2(0,R^2))}\\
		&\lesssim&R^{1-2/p}R^{2\beta+2\varepsilon}\|g\|_2\\
		&=&R^{1-2/p+2\beta+2\varepsilon}\|g \|_2.
	\end{eqnarray*}
	Replacing $2\varepsilon$ by $\varepsilon$, we have
	\begin{eqnarray*}
		\left\|V^r_{[0,1]}\left\lbrace e^{it\Delta}g_R\right\rbrace\right\|_{L^p(B^2(0,R))}\lesssim_{p,r,\varepsilon}R^{1-2/p+2\beta+\varepsilon}\|g \|_2.
	\end{eqnarray*}
	Let
	$\widetilde g_R(x):=g_R(x+x_0).$
	We have
	\begin{eqnarray*}
		e^{it\Delta}\widetilde g_R(x)=\left(e^{it\Delta}g_R\right)(x+x_0)
	\end{eqnarray*}
	and
	\begin{eqnarray*}
		\mathcal{F}_x(\widetilde g_R)(\xi)=e^{ix_0\cdot\xi}\mathcal{F}_x(g_R)(\xi),
	\end{eqnarray*}
	which indicates
	\begin{eqnarray*}
		\operatorname{supp}\mathcal{F}_x(\widetilde g_R)=\operatorname{supp}\mathcal{F}_x(g_R),\qquad\|\widetilde g_R\|_2=\|g_R\|_2.
	\end{eqnarray*}
	Therefore, we can obtain
	\begin{eqnarray}\label{eq--38}
		\left\|V^r_{[0,1]}\left\lbrace e^{it\Delta}g_R \right\rbrace\right\|_{L^p(B^2(x_0,R))}&=&\left\|V^r_{[0,1]}\left\lbrace e^{it\Delta}\widetilde g_R\right\rbrace \right\|_{L^p(B^2(0,R))}\\
		&\lesssim&R^{1-2/p+2\beta+\varepsilon}\|g \|_2.\notag
	\end{eqnarray}
	uniformly for all $x_0\in\mathbb{R}^2$.

	 Following the ideas in \cite{LRS2013} and \cite{LRV2011}, choose
	$a\in C_c^\infty(\mathbb{R}^2)$	such that
	\begin{eqnarray*}
		a(\xi)=1\qquad\text{for }1/2\leq|\xi|\leq 2
	\end{eqnarray*}
	and
	\begin{eqnarray}\label{eq-3}
		\operatorname{supp}a\subset\left\{\xi\in\mathbb{R}^2:1/4\leq|\xi|\leq4\right\}.
	\end{eqnarray}
	Define
	\begin{eqnarray}\label{eq-2}
		\mathcal{F}_x(T_{a}g_R)=a(\xi/R)\mathcal{F}_x(g_R)=\mathcal{F}_x(g_R)
	\end{eqnarray}
	Partition $\mathbb{R}^2$ into cubes
	\begin{eqnarray*}
		Q_j:=R\bigl(j+[0,1)^2\bigr),\qquad j\in\mathbb{Z}^2.
	\end{eqnarray*}
	Let $\{\chi_j\}_{j\in\mathbb{Z}^2}$ be a smooth partition of unity
	adapted to these cubes, so that
	\begin{eqnarray*}
		\sum_{j\in\mathbb{Z}^2}\chi_j(x)=1.
	\end{eqnarray*}
	Define
	\begin{eqnarray*}
		g_{R,j}:=T_{a}(\chi_{j}g_{R})=\mathcal{F}_x^{-1}(a(\xi/R)\mathcal{F}_x(\chi_{j}g_{R})).
	\end{eqnarray*}
	By (\ref{eq-2}) and (\ref{eq-3}), we obtain 
	\begin{eqnarray*}
		\sum_{j\in\mathbb{Z}^2}g_{R,j}=T_{a}\left(\sum_{j\in\mathbb{Z}^2}\chi_{j}g_{R}\right) =T_{a}(g_{R})
	\end{eqnarray*}
	and
	\begin{eqnarray*}
		\operatorname{supp}\mathcal{F}_x(g_{R,j})\subset\left\{\xi\in\mathbb{R}^2:\frac{R}{4}\leq|\xi|\leq4R\right\}.
	\end{eqnarray*}
	Also, we give
	\begin{eqnarray*}
		\sum_j\|g_{R,j}\|_2^2\lesssim\sum_j\|\chi_jg_R\|_2^2\lesssim\|g_R\|_2^2.
	\end{eqnarray*}
	Let $\|j-\ell\|_\infty=\max_{1\le k\le 2}|j_k-\ell_k|.$ For each cube $Q_\ell$,  write
	\begin{eqnarray*}
		g_R=\sum_{\|j-\ell\|_\infty\leq200}g_{R,j}+\sum_{\|j-\ell\|_\infty>200}g_{R,j}.
	\end{eqnarray*}

	Let $x_\ell$ be the center of $Q_\ell$. Then,  \eqref{eq--38} gives
	\begin{eqnarray*}
		\left\|V^r_{[0,1]}e^{it\Delta}\left\lbrace\sum_{\|j-\ell\|_\infty\leq 200}g_{R,j}\right\rbrace \right\|_{L^p(Q_\ell)}&\lesssim&	\left\|V^r_{[0,1]}e^{it\Delta}\left\lbrace  \sum_{\|j-\ell\|_\infty\leq 200}g_{R,j}\right\rbrace  \right\|_{L^p(B^2(x_\ell,R))}\\
		&\lesssim&R^{1-2/p+2\beta+\varepsilon}\left\|\sum_{\|j-\ell\|_\infty\leq 200}g_{R,j}\right\|_2.
	\end{eqnarray*}
 For fixed $j$, we have  $\#\left\{\ell:\|j-\ell\|_\infty\leq 200
\right\}<\infty$. Then we interchange the order of summation,
	\begin{eqnarray*}
		\sum_\ell\left\|\sum_{\|j-\ell\|_\infty\leq200}g_{R,j}\right\|_2^2\lesssim\sum_\ell\sum_{\|j-\ell\|_\infty\le200}\|g_{R,j}\|_2^2.\lesssim\sum_j\|g_{R,j}\|_2^2\#\left\{\ell:\|j-\ell\|_\infty\leq200\right\}\lesssim\|g_R\|_2^2.
	\end{eqnarray*}
	By $p\geq2$, we obtain
	\begin{eqnarray*}
		\left(\sum_\ell\left\|V^r_{[0,1]}\left\lbrace e^{it\Delta}\sum_{\|j-\ell\|_\infty\leq 200}g_{R,j}\right\rbrace \right\|_{L^p(Q_\ell)}^p\right)^{1/p}\lesssim_{p,r,\varepsilon}R^{1-2/p+2\beta+\varepsilon}\|g_R\|_2.
	\end{eqnarray*}

	We now consider the far part. For $\nu=0,1$,
	\begin{eqnarray*}
		\partial_t^\nu e^{it\Delta}g_{R,j}(x)=\frac{1}{(2\pi)^2}\int_{\mathbb{R}^2}\int_{\mathbb{R}^2}(-i|\xi|^2)^\nu e^{i((x-y)\cdot\xi-t|\xi|^2)}a(\xi/R)\chi_j(y)g_R(y)dyd\xi.
	\end{eqnarray*}
	After the change of variables $\xi=R\eta$, the oscillatory phase is
	\begin{eqnarray*}
		R(x-y)\cdot\eta-R^2t|\eta|^2.
	\end{eqnarray*}
	If $x\in Q_\ell$, $y\in\operatorname{supp}\chi_j$,
	$\|j-\ell\|_\infty>200$, and $0\leq t\leq1$, then
	\begin{eqnarray*}
		\left|\nabla_\eta\left(R(x-y)\cdot\eta-R^2t|\eta|^2\right)\right|\gtrsim R^2(1+\|j-\ell\|_\infty).
	\end{eqnarray*}
	Repeated integration by parts in $\eta$ gives, for every $M\geq1$,
	\begin{eqnarray*}
		\sup_{\substack{x\in Q_\ell\\0\leq t\leq1}}\left|\partial_t^\nu e^{it\Delta}g_{R,j}(x)\right|\lesssim_{M,\nu}R^{3+2\nu-2M}(1+\|j-\ell\|_\infty)^{-M}\|\chi_jg_R\|_2.
	\end{eqnarray*}
	Here we have used
	\begin{eqnarray*}
		\|\chi_jg_R\|_1\lesssim R\|\chi_jg_R\|_2.
	\end{eqnarray*}
	It follows that
	\begin{eqnarray*}
		\sup_{x\in Q_\ell}V^r_{[0,1]}\{e^{it\Delta}g_{R,j}(x)\}\lesssim_MR^{5-2M}(1+\|j-\ell\|_\infty)^{-M}\|\chi_jg_R\|_2.
	\end{eqnarray*}
	Since $|Q_\ell|=R^2$,
	\begin{eqnarray*}
		\left\|V^r_{[0,1]}\{e^{it\Delta}g_{R,j}\}\right\|_{L^p(Q_\ell)}\lesssim_MR^{5+2/p-2M}(1+\|j-\ell\|_\infty)^{-M}\|\chi_jg_R\|_2.
	\end{eqnarray*}
	Discrete Young's inequality, followed by $\ell^2\subset\ell^p$, gives
	\begin{eqnarray*}
		\left(\sum_\ell\left\|V^r_{[0,1]}\left\lbrace e^{it\Delta}\sum_{\|j-\ell\|_\infty>200}g_{R,j}\right\rbrace  \right\|_{L^p(Q_\ell)}^p\right)^{1/p}\lesssim R^{5+2/p-2M}\|g_R\|_2.
	\end{eqnarray*}
	Since $M$ is arbitrary, the right-hand side is bounded by $R^{-100}\|g_R\|_2$ after choosing $M$ sufficiently large. Combining the near and far estimates, we obtain
	\begin{eqnarray}\label{eq--39}
		\left\|V^r_{[0,1]}\{e^{it\Delta}g_R\}\right\|_{L^p(\mathbb{R}^2)}\lesssim_{p,r,\varepsilon}R^{1-2/p+2\beta+\varepsilon}\|g_R\|_2.
	\end{eqnarray}
	Choose a smooth inhomogeneous Littlewood--Paley decomposition $f=g_{0}+\sum_{j>0}g_j,$ where $g_{j}=g_{R}|_{R=2^{j}}$. Set $s_0:=1-\frac2p+2\beta$. By subadditivity of the variation and
	\eqref{eq--39}, we have
	\begin{eqnarray*}
		\left\|V^r_{[0,1]}\{e^{it\Delta}\sum_{j>0}g_j\}\right\|_{L^p}\lesssim\sum_{j>0}2^{js_0+\varepsilon j}\|g_j\|_2.
	\end{eqnarray*}
	Let $s>s_0$, and choose $\varepsilon>0$ so that $s-s_0-\varepsilon>0$. Then
	\begin{eqnarray*}
		\sum_{j>0}2^{js_0+j\varepsilon}\|g_j\|_2\leq\left(\sum_{j>0}2^{-2j(s-s_0-\varepsilon)}\right)^{1/2}\left(\sum_{j>0}2^{2js}\|g_j\|_2^2\right)^{1/2}\lesssim\|f\|_{H^s(\mathbb{R}^2)}.
	\end{eqnarray*}
	The low-frequency contribution satisfies
	\begin{eqnarray*}
		\left\|V^r_{[0,1]}\{e^{it\Delta}g_{\leq1}\}\right\|_{L^p}\lesssim_p\|g_{\leq1}\|_2\lesssim\|f\|_{H^s}.
	\end{eqnarray*}
	This proves \eqref{eq--37}.
\end{proof}

\begin{theorem}\label{thm:global-positive-estimates}
	Let $2\leq p\leq\infty$, $1\leq r\leq\infty$. Then, for every $s> s(p,r)$,
	$$\left\|V^r_{[0,1]}\{e^{it\Delta}f\}\right\|_{L^p(\mathbb{R}^2)}\lesssim_{p,r,s}\|f\|_{H^s(\mathbb{R}^2)},$$
	where
	$$s(p,r)=
	\left\{\begin{array}{ll}
		\displaystyle \frac{2}{r}+\frac{1}{p}-\frac{1}{2},
		&1\leq r\leq2,\quad 2\leq p\leq6,\\[7pt]
		\displaystyle \frac{2}{r}-\frac{2}{p},
		&1\leq r\leq2,\quad 6\leq p\leq\infty,\\[7pt]
		\displaystyle \max\left\{m(p),\frac1p+\frac1r\right\},
		&2<r\leq\infty
	\end{array}\right.$$ 
    and
	$$m(p)=
		\left\{\begin{array}{ll}
		\displaystyle \frac{2}{p}-\frac{1}{3},
		&2\leq p\leq3,\\[5pt]
		\displaystyle 1-\frac{2}{p},
		&3\leq p\leq\infty.
	\end{array}\right.$$
\end{theorem}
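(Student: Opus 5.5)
The plan is to combine the local-to-global principle of Theorem \ref{Th-3} with the frequency-localized bounds of Lemmas \ref{Le-2}--\ref{Le-6}. I first treat $1\leq r<\infty$. For each region of $(p,r)$, I read off the exponent $\beta=\beta(p,r)$ from the relevant local lemma, so that hypothesis \eqref{eq--36} holds. Theorem \ref{Th-3} then gives \eqref{eq--37} for $s>1-\frac2p+2\beta$. What remains is to check that $1-\frac2p+2\beta$ equals $s(p,r)$ in each region, including that the correct branch of the maximum is the active one.

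The bookkeeping goes region by region.
\begin{enumerate}
\item $2\leq p<3$ and $r\geq q_p=\frac{3p}{3-p}$. Lemma \ref{Le-2} gives $\beta=\frac2p-\frac23$, hence the threshold $\frac2p-\frac13=m(p)$. Since $\frac1r\leq\frac1p-\frac13$, we have $\frac1p+\frac1r\leq m(p)$, so $m(p)=s(p,r)$ here.
\item $2\leq p<3$ and $2\leq r\leq q_p$. Lemma \ref{Le-4} gives $\beta=-\frac12+\frac3{2p}+\frac1{2r}$, so $1-\frac2p+2\beta=\frac1p+\frac1r$. The reverse inequality $\frac1r\geq\frac1p-\frac13$ shows that $\frac1p+\frac1r$ is the maximum.
\item $1\leq r\leq2$. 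Lemma \ref{Le-5} gives $2\beta=\frac2r-\frac32+\frac3p$ for $p\leq6$, hence $\frac2r+\frac1p-\frac12$. For $p\geq6$ it gives $2\beta=\frac2r-1$, hence $\frac2r-\frac2p$.
\item $3<p<6$ and $r>2$. Lemma \ref{Le-7} gives $2\beta=\max\{0,-1+\frac3p+\frac1r\}$, hence $\max\{1-\frac2p,\frac1p+\frac1r\}$, which is exactly $s(p,r)$ since $m(p)=1-\frac2p$.
\item $p=3$. Lemma \ref{Le-6} gives $2\beta=\frac1r$, hence $\frac13+\frac1r=\max\{m(3),\frac13+\frac1r\}$.
\item $p\geq6$ and $r>2$. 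Lemma \ref{Le-6} gives $\beta=0$, hence $1-\frac2p$. This equals $s(p,r)$ because $\frac3p+\frac1r\leq\frac12+\frac12=1$.
\end{enumerate}
The loss $R^{\varepsilon}$ allowed in \eqref{eq--36} absorbs the $\varepsilon$-losses in these lemmas.

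The case $r=\infty$ is excluded from Theorem \ref{Th-3}, so it is handled separately. Pointwise, $V^\infty_{[0,1]}\{e^{it\Delta}f(x)\}=2\sup_t|e^{it\Delta}f(x)|\leq 2|f(x)|+2V^{r}_{[0,1]}\{e^{it\Delta}f(x)\}$ for any finite $r$. Since $p\geq2$, Sobolev embedding gives $\|f\|_{L^p}\lesssim\|f\|_{H^s}$ for $s\geq1-\frac2p$, and $1-\frac2p\leq s(p,r)$ in every region. From the formulas, $s(p,r)\to s(p,\infty)=m(p)$ as $r\to\infty$ (for $r>2$, $s(p,r)=\max\{m(p),\frac1p+\frac1r\}$ and $\frac1p\leq m(p)$ for $p\geq2$). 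So given $s>s(p,\infty)$, I pick a finite $r$ large enough that $s>s(p,r)$ and apply the finite case.

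The delicate point is not analytic but combinatorial. The main thing to watch is that the local lemmas' ranges cover every $(p,r)$ with matching thresholds at the interfaces $r=2$, $r=q_p$, $p=3$ and $p=6$; at these interfaces the two neighbouring formulas agree, for example $\frac2r+\frac1p-\frac12=\frac1p+\frac12$ at $r=2$. I also expect to need care with the Sobolev remark used in the $r=\infty$ reduction.
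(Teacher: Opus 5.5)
For $1\le r<\infty$ your argument is the paper's argument. You assign the same local lemma to each region of $(p,r)$, and your exponent arithmetic and identification of the active branch match the paper's case-by-case computation. One hypothesis of Theorem \ref{Th-3}, namely $\beta\ge 0$, is checked neither by you nor by the paper. It does hold in every region. In Lemmas \ref{Le-2} and \ref{Le-4} one has $\beta\ge\frac2p-\frac23>0$. In Lemma \ref{Le-5} one has $\frac1r-\frac34+\frac3{2p}\ge\frac12-\frac34+\frac14=0$ for $p\le6$, and $\frac1r-\frac12\ge0$ for $p\ge6$.

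The genuine difference is the case $r=\infty$. The paper disposes of it in one line by identifying it with ``the corresponding Schr\"odinger maximal estimate.'' The paper never proves the global $L^p(\mathbb{R}^2)$ maximal bound for $s>m(p)$ independently. Theorem \ref{Th-3} is stated only for $r<\infty$, and Remark \ref{rek-1} presents that maximal bound as a consequence of Theorem \ref{Th-1}. So the paper's treatment of $r=\infty$ amounts to an appeal to an outside, or circular, result.

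Your reduction closes this gap using three facts:
\begin{enumerate}
\item the pointwise bound $\sup_t|e^{it\Delta}f|\le|f|+V^r_{[0,1]}\{e^{it\Delta}f\}$;
\item the embedding $H^s\hookrightarrow L^p$, which applies because $s>s(p,r)\ge1-\frac2p$ strictly (strictness is exactly what is needed at $p=\infty$, where $H^1\not\hookrightarrow L^\infty$);
\item $s(p,r)\downarrow m(p)$ as $r\to\infty$.
\end{enumerate}
With these, the maximal case becomes a corollary of the finite-$r$ local lemmas. The limit in $r$ is essential only at $p=3$, where $s(3,r)=\frac13+\frac1r>\frac13$ for every finite $r$. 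For $p\ne3$ a single finite exponent already attains $m(p)$: $r=q_p$ when $p<3$, $r=\frac{p}{p-3}$ when $3<p<6$, and any $r>2$ when $p\ge6$. Your route costs nothing extra. It buys a self-contained proof of the whole theorem from Lemmas \ref{Le-2}--\ref{Le-6} and Theorem \ref{Th-3}, with no appeal to an unproved global maximal estimate.
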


\begin{proof}
	When $r=\infty$, the assertion is the corresponding Schr\"odinger maximal estimate under the convention $V^\infty_I F=2\sup_{t\in I}|F(x,t)|$. We therefore assume $1\leq r<\infty$ below.

	We combine the local estimates of Section 3 with Theorem \ref{Th-3}.  In the notation of that theorem, a local bound with the factor $R^{\beta+\varepsilon}$ gives the global threshold
	$$s_0=1-\frac{2}{p}+2\beta.$$
    Suppose first that $1\leq r\leq2$. If $2\leq p\leq6$, then Lemma \ref{Le-5} gives
	$$\beta=\frac1r-\frac34+\frac{3}{2p}.$$
	Consequently,
	\begin{eqnarray*}
		s_0=1-\frac{2}{p}+2\left(\frac{1}{r}-\frac{3}{4}+\frac{3}{2p}\right)=\frac{2}{r}+\frac{1}{p}-\frac{1}{2}.
	\end{eqnarray*}
	In this range the last expression is the largest of the five quantities in the definition of $ s(p,r)$.  If $6\leq p\leq\infty$, the same corollary instead gives
	$$\beta=\frac{1}{r}-\frac{1}{2},$$
	and hence
	\begin{eqnarray*}
		s_0=1-\frac{2}{p}+2\left(\frac{1}{r}-\frac{1}{2}\right)=\frac{2}{r}-\frac{2}{p}.
	\end{eqnarray*}
	This is again $ s(p,r)$.  Notice that these calculations include
	$r=1$, $r=2$, $p=2$, and $p=6$.
	We next take $2\leq p<3$ and put
	$$q_p=\frac{3p}{3-p}.$$
	For $r\geq q_p$, Lemma \ref{Le-2} yields
	$$\beta=\frac{2}{p}-\frac{2}{3}.$$
	Therefore
	\begin{eqnarray*}
		s_0=1-\frac{2}{p}+2\left(\frac{2}{p}-\frac{2}{3}\right)=\frac{2}{p}-\frac{1}{3}=m(p).
	\end{eqnarray*}
	Since $r\geq q_p$ implies
	$$\frac{1}{p}+\frac{1}{r}\leq\frac{1}{p}+\frac{1}{q_p}=\frac{2}{p}-\frac{1}{3},$$
	we have $s_0= s(p,r)$ in this range.  For
	$2<r\leq q_p$, Lemma \ref{Le-4} gives
	$$\beta=-\frac{1}{2}+\frac{3}{2p}+\frac{1}{2r}.$$
	It follows that
	\begin{eqnarray*}
		s_0=1-\frac{2}{p}+2\left(-\frac12+\frac{3}{2p}+\frac{1}{2r}\right)=\frac{1}{p}+\frac{1}{r}.
	\end{eqnarray*}
	Here $r\leq q_p$ gives
	$1/p+1/r\geq2/p-1/3=m(p)$, so this value is also $ s(p,r)$. At $p=3$ and $r>2$, the first estimate in Lemma \ref{Le-6} has
	$$\beta=\frac{1}{2r}.$$
	Thus
	$$s_0=1-\frac{2}{3}+\frac{1}{r}=\frac{1}{3}+\frac{1}{r}= s(3,r).$$
	Let $3<p<6$ and $r>2$. Lemma \ref{Le-7} gives
	$$\beta=\max\left\{0,-\frac12+\frac{3}{2p}+\frac{1}{2r}\right\}.$$
	Consequently,
	\begin{eqnarray*}
		s_0=1-\frac{2}{p}+2\max\left\{0,-\frac{1}{2}+\frac{3}{2p}+\frac{1}{2r}\right\}=\max\left\{1-\frac{2}{p},\frac{1}{p}+\frac{1}{r}\right\}=s(p,r).
	\end{eqnarray*}
	The equality of the two branches occurs at $r=p/(p-3)$.

	Finally, let $6\leq p\leq\infty$ and $r>2$.  The second estimate in Lemma \ref{Le-6} gives $\beta=0$, and therefore
	$$s_0=1-\frac{2}{p}.$$
	Since
	$$\frac{1}{p}+\frac{1}{r}<\frac{1}{p}+\frac{1}{2}\leq1-\frac{2}{p},$$
	the last quantity equals $ s(p,r)$.  All admissible pairs $(p,r)$ have now been covered. Applying Theorem \ref{Th-3} in each case completes the proof of the sufficiency part of Theorem 1.1. 
\end{proof}

\section{The proof of the necessity part in Theorem \ref{Th-1}}
\subsection{Counterexample in the range $p<2$ or $r<1$}
The restriction $r\geq1$ is already necessary for a single
Littlewood--Paley piece.  Choose a nonzero function
$f\in\mathcal{S}(\mathbb{R}^2)$ satisfying
$$\operatorname{supp}\mathcal{F}_{x}(f)\subset\left\{\xi\in\mathbb{R}^2:\frac{1}{2}\leq|\xi|\leq2\right\},\quad \text{and} \quad\Delta f\not\equiv0.$$
and set
$$u(x,t):=e^{it\Delta}f(x).$$
Moreover, $\partial_tu(x,0)=i\Delta f(x).$ Hence there exist a compact set $E\subset\mathbb{R}^2$ of positive measure and a constant $c_0>0$ such that
$$|\Delta f(x)|\geq2c_0,\qquad x\in E.$$
Since $\partial_tu$ is continuous in $(x,t)$, there exists
$0<\delta\leq1$ such that
$$\left|\partial_t u(x,t)-i\Delta f(x)\right| \leq c_0$$
for every $x\in E$ and $0\leq t\leq\delta$.

For each positive integer $R$, let
$$t_j=\frac{j\delta}{R},\qquad j=0,\ldots,R.$$
For $x\in E$, the fundamental theorem of calculus gives
$$\begin{aligned}
	u(x,t_j)-u(x,t_{j-1})=\frac{\delta}{R}i\Delta f(x)+\int_{t_{j-1}}^{t_j}\left(\partial_t u(x,t)-i\Delta f(x)\right)dt.
\end{aligned}
$$
Consequently,
$$|u(x,t_j)-u(x,t_{j-1})|\geq|\frac{\delta}{R}i\Delta f(x)| -\left| 
\int_{t_{j-1}}^{t_j}\left(\partial_t u(x,t)-i\Delta f(x)\right)dt\right|\geq c_0\frac{\delta}{R}.$$
It follows that
$$V_{[0,1]}^r\{e^{it\Delta}f(x)\}\geq\left(\sum_{j=1}^R|u(x,t_j)-u(x,t_{j-1})|^r\right)^{1/r} \geq c_0\delta R^{1/r-1}.$$
If $0<r<1$, then $1/r-1>0$.  Let $R\to\infty$, we obtain
$$V_{[0,1]}^r\{e^{it\Delta}f (x)\}=\infty,\qquad x\in E.$$
Therefore no $L^p$ estimate for the homogeneous $r$-variation can
hold when $0<r<1$. The restriction $p\geq2$ follows from a theorem of H\"ormander on translation-invariant operators. Although the variation operator itself is not linear, we can construct a linear operator whose absolute value is pointwise dominated by the variation operator.  Suppose that
$$\bigl\|V^r_{[0,1]}\{e^{it\Delta}f\}\bigr\|_{L^p(\mathbb{R}^2)}\lesssim\|f\|_{H^s(\mathbb{R}^2)} $$
with $f\in \mathcal{S}(R^{2})$. Let $g$ be a nonzero Littlewood--Paley function is supported in $\left\{\xi\in\mathbb{R}^2:\frac12\leq|\xi|\leq1\right\},$ and fix $t_0\in(0,1]$. Let $T_{g}(f):=\mathcal{F}_{x}^{-1}(g\mathcal{F}_{x}(f))$.  Then we have 
$$
\begin{aligned}
	\bigl\|(e^{it_0\Delta}-I)T_{g}(f)\bigr\|_{L^p(\mathbb{R}^2)}
	&\leq
	\bigl\|
	 V^r_{[0,1]}\{e^{it\Delta} T_{g}(f)\}
	\bigr\|_{L^p(\mathbb{R}^2)}
	\lesssim
	\|T_{g}(f)\|_{H^s(\mathbb{R}^2)}\lesssim \|f\|_{H^s(\mathbb{R}^2)}.
\end{aligned}
$$
Furthermore, $T=\left( e^{it_0\Delta}-I\right)T_{g}$ would define a bounded translation-invariant operator from $L^2(\mathbb{R}^2)$ to $L^p(\mathbb{R}^2)$. Note that $\bigl(e^{-it_0|\xi|^2}-1\bigr)g(\xi)\not\equiv0$ ensures that $T_{g}$ is nonzero.  H\"ormander's theorem for translation invariant operators then implies $p\geq2$; see \cite[Theorem 1.1]{H1960}.  Hence the variation estimate cannot hold when $p<2$.

\subsection{Counterexample in the range  $s< m(p)$}
\begin{theorem}
	Let $2\leq p\leq\infty$ and $1\leq r<\infty$.  If
$$
\bigl\|V^r_{[0,1]}\{e^{it\Delta}f \}\bigr\|_{L^p(\mathbb{R}^2)}
\lesssim \|f\|_{H^s(\mathbb{R}^2)}
$$
holds for all $f\in H^s(\mathbb{R}^2)$, then necessarily
$$
s\geq
\begin{cases}
	\displaystyle \frac{2}{p}-\frac13,
	& 2\leq p\leq3,\\[4pt]
	\displaystyle 1-\frac{2}{p},
	& 3\leq p\leq\infty.
\end{cases}
$$
\end{theorem}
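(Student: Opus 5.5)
The plan is to reduce both lower bounds to counterexamples for the maximal operator. Fix $t_0=0$. For any $f$ and any $t\in[0,1]$,
\[
|e^{it\Delta}f(x)|\le |f(x)|+|e^{it\Delta}f(x)-f(x)|\le |f(x)|+V^r_{[0,1]}\{e^{it\Delta}f(x)\},
\]
since $0<t$ gives an admissible two-point partition. Hence
\[
\bigl\|\sup_{0<t<1}|e^{it\Delta}f|\bigr\|_{L^p}\le \|f\|_{L^p}+\bigl\|V^r_{[0,1]}\{e^{it\Delta}f\}\bigr\|_{L^p}.
\]
If the variational estimate held for some $s<m(p)$, the maximal estimate \eqref{eq-1} would hold with the same $s$, up to the term $\|f\|_{L^p}$. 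So it suffices to show that \eqref{eq-1} fails for $s<m(p)$, using examples in which $\|f\|_{L^p}$ is negligible compared with the maximal function.

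For $2\le p\le3$ I will use Bourgain's counterexample \cite{B2016} as in the Du--Guth--Li sharpness discussion. It gives data $f$ with $\operatorname{supp}\mathcal F_x f\subset\{|\xi|\approx R\}$ such that $\sup_{0<t<1}|e^{it\Delta}f(x)|\gtrsim R^{-\delta}\cdot(\text{peak size})$ on a set of measure $\approx 1$ in the unit ball. Meanwhile $f$ itself has much smaller $L^p$ norm, because it is spread out at lower amplitude. I will recompute the ratio $\|\sup_t|e^{it\Delta}f|\|_{L^p(B(0,1))}/\|f\|_{H^s}$ for general $p$ rather than $p=2$. Bourgain's construction, in the normalization with $\|f\|_2\approx1$, should give growth $R^{2/p-1/3-s}$ up to $R^\varepsilon$. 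I also need to check that $\|f\|_{L^p}\lesssim R^{-c}$ times this lower bound, which should follow since $|f|$ is essentially uniformly small on its support. This exponent bookkeeping is the main obstacle: Bourgain's example is usually stated in $L^1$ or $L^2$ on the unit ball, and I must verify that the measure of the set where the maximal function is large, the $L^\infty$ size there, and the $L^2$ normalization combine to exactly $2/p-1/3$.

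For $3\le p\le\infty$ I will use a scaling (Knapp-type) example in the spirit of \cite[Section 3]{LRS2013}. Take $\widehat f=\phi(\xi/R)$ with $\phi$ a bump supported near $|\xi|\approx1$, or a bump at distance $R$ of width $R$. Then $\|f\|_{H^s}\approx R^{s+1}$, and $|e^{it\Delta}f(x)|\approx R^2$ for $|x|\lesssim R^{-1}$ and $t\lesssim R^{-2}$. This alone only gives $\|\sup\|_{L^p}\gtrsim R^{2-2/p}$, hence $s\ge1-2/p$, which is already the required bound.

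The remaining issue is that $\|f\|_{L^p}\approx R^{2-2/p}$ is of the same size, so the first display does not separate the maximal function from $f$. To fix this I will translate the frequency support to $|\xi-Re_1|\le c R$. The wave packet then moves with speed $\approx R$: at time $t$ it is concentrated near $x=2tRe_1$ and has size $R^2$ for $t\le R^{-1}$. So $\sup_t|e^{it\Delta}f|\gtrsim R^2$ on a $R^{-1}\times 1$ tube, of measure $R^{-1}$. At $t=0$ the function lives only on a ball of radius $R^{-1}$, so it is negligible off that ball. On the tube, excluding that ball, I will bound $V^r\ge|e^{it(x)\Delta}f(x)-f(x)|\gtrsim R^2$. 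This gives
\[
\|V^r\|_{L^p}\gtrsim R^{2}R^{-1/p}\lesssim R^{1+s}.
\]
Hence $s\ge1-1/p$, which is even stronger than needed, though I will double-check the dispersion over $t\le R^{-1}$. In any case the unmoved scaling example, with the difference estimate taken on an annulus $R^{-1}\ll|x|\lesssim R^{-1}\log R$ where $|f|$ decays rapidly, gives $s\ge1-2/p$ cleanly. I will present that version as the safe argument.
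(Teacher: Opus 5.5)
Your plan for $2\le p<3$ has a genuine gap. You propose to compute $\|\sup_{0<t<1}|e^{it\Delta}f|\|_{L^p(B(0,1))}$ for Bourgain's data at frequency $R$ and expect growth $R^{2/p-1/3-s}$. That growth is impossible. For $p\le3$, H\"older on $B(0,1)$ and the Du--Guth--Li bound \eqref{eq:1.2} give $\|\sup_{0<t\le1}|e^{it\Delta}f|\|_{L^p(B(0,1))}\lesssim\|f\|_{H^s}$ for every $s>1/3$. So no unit-ball example can force more than $s\ge1/3$, and $1/3<2/p-1/3$ when $p<3$; no exponent bookkeeping inside the unit ball can fix this.

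The exponent $2/p-1/3$ comes from the estimate being global in $x$, and the missing idea is parabolic rescaling. At frequency $\lambda$, Bourgain's large values occur already for $t\lesssim\lambda^{-1}$, and rescaling converts the unused time range into spatial measure. Concretely, take Bourgain's example at frequency $R^2$ with $\|f\|_2=1$, so that $\sup_{0<t<cR^{-2}}|e^{it\Delta}f|\gtrsim R^{2/3}$ on some $E\subset B(0,1)$ with $|E|\gtrsim1$. Set $g(x)=R^{-1}f(x/R)$. Then $\|g\|_2=1$, the frequency is $\approx R$, and $e^{it\Delta}g(x)=R^{-1}\bigl(e^{i(t/R^2)\Delta}f\bigr)(x/R)$. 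Hence $\sup_{0<t<1}|e^{it\Delta}g|\gtrsim R^{-1/3}$ on $RE$, a set of measure $\gtrsim R^2$, which gives $\|\sup_{0<t<1}|e^{it\Delta}g|\|_{L^p(\mathbb{R}^2)}\gtrsim R^{2/p-1/3}$. The $\|g\|_{L^p}$ term in your first display is then handled by Bernstein, $\|g\|_{L^p}\lesssim R^{1-2/p}$, which is of lower order precisely because $p<3$. Your heuristic that $f$ is ``uniformly small on its support'' is not what is needed.

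For $3\le p\le\infty$, the moving-packet claim $s\ge1-1/p$ is false. A packet with frequency spread $\approx R$ disperses after time $\approx R^{-2}$, so its amplitude at time $t$ is $\approx1/t$, only $\approx R$ when $t\approx R^{-1}$; that example gives nothing beyond $1-2/p$. The claim would also contradict the sufficiency part of Theorem \ref{Th-1}, e.g.\ at $p=6$, $r>2$, where $s>2/3$ suffices.

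Your fallback annulus argument can be made to work, but as written it is unjustified. For $|x|\gg R^{-1}$ the solution is not of size $R^2$ at times $t\lesssim R^{-2}$. It becomes large only at $t\approx|x|/R$, with amplitude $\approx R/|x|$, and establishing that lower bound needs a stationary-phase argument you did not supply.

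The paper avoids comparing the maximal function with $\|f\|_{L^p}$ altogether. Take $f_R(x)=R\phi(Rx)$ with $\operatorname{supp}\mathcal{F}_x\phi\subset\{1/2\le|\xi|\le2\}$ and use the single increment between the times $0$ and $R^{-2}$. By scaling, $e^{iR^{-2}\Delta}f_R-f_R=R\bigl((e^{i\Delta}-I)\phi\bigr)(R\,\cdot)$, a rescaled nonzero profile since $e^{-i|\xi|^2}-1\neq0$ on the annulus. Hence $\|V^r_{[0,1]}\{e^{it\Delta}f_R\}\|_{L^p}\ge R^{1-2/p}\|(e^{i\Delta}-I)\phi\|_{L^p}$, while $\|f_R\|_{H^s}\approx R^s$, which gives $s\ge1-2/p$ directly.
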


\begin{proof}
Let $2\leq p<3$. Bourgain's construction \cite[Section 2]{B2016}, applied at frequency $R^2$, provides a function $f\in\mathcal S(\mathbb{R}^2)$ satisfying
$$\|f\|_2=1,\qquad\operatorname{supp}\mathcal F_x f\subset\{\xi:|\xi|\approx R^2\},$$
and a measurable set $E\subset B(0,1)$, with $|E|\gtrsim 1$, such that
$$\sup_{0<t<cR^{-2}}|e^{it\Delta}f(x)|\gtrsim R^{2/3},\qquad x\in E,$$
where $0<c<1$ is independent of $R$; see also \cite[Proposition 2.1]{DKWZ2020}. Set $g(x)=R^{-1}f(x/R)$. Then
$$\|g\|_2=1,\qquad\operatorname{supp}\mathcal F_x g\subset\{\xi:|\xi|\approx R\},\qquad\|g\|_{H^s}\approx R^s.$$
By the scaling identity
$$e^{it\Delta}g(x)=R^{-1}\bigl(e^{i(t/R^2)\Delta}f\bigr)(x/R),$$
we have
$$\sup_{0<t<1}|e^{it\Delta}g(x)|\gtrsim R^{-1/3},\qquad x\in RE.$$
Since $|RE|\gtrsim R^2$, this gives
$$\left\|\sup_{0<t<1}|e^{it\Delta}g|\right\|_{L^p(\mathbb{R}^2)}\gtrsim R^{2/p-1/3}.$$
Take the two time points $0$ and $t$ in the definition of the variation,  we obtain
$$\bigl\|V^r_{[0,1]}\{e^{it\Delta}g\}\bigr\|_{L^p(\mathbb{R}^2)}\geq\left\|\sup_{0<t<1}|e^{it\Delta}g|\right\|_{L^p(\mathbb{R}^2)}-\|g\|_{L^p(\mathbb{R}^2)}\gtrsim R^{2/p-1/3}-R^{1-2/p}.$$
As $p<3$, we have $2/p-1/3>1-2/p$. Hence, for all sufficiently large $R$,
$$\bigl\|V^r_{[0,1]}e^{it\Delta}g\bigr\|_{L^p(\mathbb{R}^2)} \gtrsim R^{2/p-1/3}.$$
Consequently, the asserted $H^s$ estimate requires $s\geq2/p-1/3$.

	For $3\leq p\leq\infty$, we use the standard frequency concentration example underlying the necessary condition $s\geq1-2/p$ for the global Schr\"odinger maximal estimate; see \cite[Theorem 9]{RV2007}. For variation, it suffices to consider a single time increment.
	
	Choose a fixed nonzero function	$\phi\in\mathcal S(\mathbb{R}^2)$ such that
	$$\operatorname{supp}\mathcal F_x\phi\subset\{\xi\in\mathbb{R}^2:1/2\leq|\xi|\leq2\},$$
	and set	$f_R(x)=R\phi(Rx)$.
    Note that 
	$$|e^{-i|\xi|^2}-1|=2\left|\sin\left(\frac{|\xi|^2}{2}\right)\right|\geq 2\sin(1/8)>0\qquad\text{when }1/2\leq|\xi|\leq2,$$
	which indicates $(e^{i\Delta}-I)\phi\not\equiv0$. Take the two time points $0$ and $R^{-2}$ gives	
	$$V^r_{[0,1]}\{e^{it\Delta}f_R(x)\}\geq|e^{iR^{-2}\Delta}f_R(x)-f_R(x)|.$$
	Therefore,
	\begin{eqnarray*}
		\bigl\|V^r_{[0,1]}\{e^{it\Delta}f_R\}\bigr\|_{L^p}\geq\left(\int_{\mathbb{R}^{2}}\left|Re^{i\Delta}\phi(Rx)-R\phi(Rx)\right|^{p}dx\right)^{1/p}=R^{1-2/p}\|(e^{i\Delta}-I)\phi\|_{L^{p}(\mathbb{R}^{2})}
	\end{eqnarray*}
	Since 	$\|f_R\|_2=\|\phi\|_2,\|f_R\|_{H^s}\approx R^s,$the assumed estimate forces
	$$R^{1-2/p}\lesssim R^s.$$
	Let $R$ tend to infinity proves $s\geq1-2/p$. This also gives $s\geq1/3$ at $p=3$. 
\end{proof}

\subsection{Counterexample in the range   $s< \frac{1}{p}+\frac{1}{r}$}
In this section, we adapt the high-frequency Galilean wave-packet construction used by Lee, Rogers, and Seeger to the $r$-variation of the Schrödinger equation \cite{LRS2013}, obtaining a necessary condition for the corresponding variation estimate.
\begin{theorem}
	Let $2\leq r<\infty$ and $2\leq  p<\infty$.  If
	$$\bigl\|V^r_{[0,1]}\{e^{it\Delta}f\}\bigr\|_{L^p(\mathbb{R}^2)}\lesssim \|f\|_{H^s(\mathbb{R}^2)}$$
	holds for all $f\in H^s(\mathbb{R}^2)$, then necessarily
	$$s\ge \frac{1}{p}+\frac{1}{r}.$$
\end{theorem}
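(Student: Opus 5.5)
The plan is to use a single Galilean-boosted wave packet, a one-packet version of the construction of Lee--Rogers--Seeger \cite{LRS2013}. Fix a nonzero $\phi\in\mathcal S(\mathbb{R}^2)$ with $\operatorname{supp}\mathcal F_x\phi\subset B^2(0,1/100)$. For large $R$ set
$$f_R(x):=e^{iRx_1}\phi(x),\qquad \operatorname{supp}\mathcal F_x f_R\subset\{\xi:|\xi|\approx R\}.$$
Then $\|f_R\|_2=\|\phi\|_2$ and $\|f_R\|_{H^s}\approx R^s$.

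\textbf{Step 1 (explicit evolution).} The Galilean identity gives
$$e^{it\Delta}f_R(x)=e^{i(Rx_1-R^2t)}\,\bigl(e^{it\Delta}\phi\bigr)(x-2tRe_1).$$
Since $e^{it\Delta}\phi$ is uniformly continuous in $(x,t)\in\mathbb{R}^2\times[0,1]$ and $\phi\neq0$, we may translate and shrink so that $|e^{it\Delta}\phi(y)|\ge c_0>0$ for all $|y|\le c_1$ and $t\in[0,1]$. Hence $|e^{it\Delta}f_R(x)|\ge c_0$ whenever $|x-2tRe_1|\le c_1$.

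\textbf{Step 2 (pointwise lower bound for the variation).} Let
$$E_R:=\{x:\ c_1\le x_1\le 2R-c_1,\ |x_2|\le c_1/2\},\qquad |E_R|\approx R.$$
For $x\in E_R$, the packet sits over $x$ for all $t$ in an interval $J_x\subset[0,1]$ of length $\approx 1/R$, namely $|t-x_1/(2R)|\lesssim 1/R$. On $J_x$ the phase $Rx_1-R^2t$ runs through an interval of length $\approx R$, so it passes $\approx R$ times through values $\equiv 0$ and $\equiv\pi \pmod{2\pi}$.

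The amplitude $e^{it\Delta}\phi(x-2tRe_1)$ is not constant on $J_x$. However, its $t$-derivative is $O(R)$, which is the same order as the phase speed $R^2$ only after rescaling. To make the amplitude nearly constant between consecutive chosen times, work instead on a subinterval $J'_x\subset J_x$ of length $\delta/R$ with $\delta$ small. On $J'_x$ the amplitude varies by $O(\delta)$ while the phase still makes $\approx\delta R$ turns.

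Choose alternating times $t_k\in J'_x$ at which the phase is $0,\pi,0,\pi,\dots$ modulo $2\pi$. Then consecutive differences satisfy
$$\bigl|e^{it_k\Delta}f_R(x)-e^{it_{k-1}\Delta}f_R(x)\bigr|\ge 2c_0-O(\delta)\ge c_0,$$
and there are $\approx\delta R$ of them. Therefore
$$V^r_{[0,1]}\{e^{it\Delta}f_R(x)\}\gtrsim_\delta R^{1/r},\qquad x\in E_R.$$

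\textbf{Step 3 (conclusion).} Integrating over $E_R$ gives
$$\bigl\|V^r_{[0,1]}\{e^{it\Delta}f_R\}\bigr\|_{L^p}\gtrsim R^{1/r}|E_R|^{1/p}\approx R^{1/r+1/p}.$$
Comparing with $\|f_R\|_{H^s}\approx R^s$ and letting $R\to\infty$ forces $s\ge \frac1p+\frac1r$.

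The main obstacle is Step 2: certifying that the amplitude stays essentially frozen while the phase oscillates, so that each increment is of size $\gtrsim 1$. This reduces to uniform continuity of $e^{it\Delta}\phi$ along the moving frame, together with the choice of the short subinterval $J'_x$ described there.
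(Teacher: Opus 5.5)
Your proposal is correct and is essentially the paper's proof. It uses the same Galilean packet $e^{iRx_1}\phi(x)$, a region of measure $\approx R$ swept by the packet, and $\approx R$ times spaced by $\pi/R^2$ inside a window of length $\sim 1/R$, where the carrier $e^{-iR^2t}$ alternates sign; this gives $R^{1/r+1/p}\lesssim R^s$.

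The only difference is how the increments are certified. The paper takes $\mathcal{F}_x\varphi\ge 0$ with small support, so that $\operatorname{Re}\, e^{it\Delta}\varphi\gtrsim 1$ near the packet centre for all $t\in[0,1]$. Your Step 1 bound for all $t\in[0,1]$ does not follow from continuity alone, which only gives it for small $t$; you need either a parabolic rescaling of $\phi$ or the paper's choice $\mathcal{F}_x\phi\ge 0$. You instead use a modulus bound plus near-constancy of the amplitude. That near-constancy actually holds automatically, since the amplitude changes by only $O(R\cdot R^{-2})$ between consecutive times, so the subinterval $J'_x$ is unnecessary.
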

\begin{proof}
	Let $ \mathcal{F}_x(\varphi)\in C^{\infty}_{c}$ satisfy $\mathcal{F}_x(\varphi)\geq 0$ and 
	$$  \mathcal{F}_x(\varphi)=\left\{
	\begin{array}{rcl}
		&1,\quad\quad \quad \text{ for } \xi \in B^{2}(0,1/4) ;\\
		&O(R^{-1000}),\text{ for } \xi \nin B^{2}(0,1/2).
	\end{array}
	\right.$$
	 Define
	$$ f_{R}(x):= e^{iRx_{1}}\varphi(x),$$
	where $R\geq 1$. By a directly computation, we obtain
	$$\mathcal{F}_x(f_{R})(\xi)=\int_{\mathbb{R}^{2}}e^{-ix\xi}e^{iRx_{1}}\varphi(x)dx=\mathcal{F}_x(\varphi)(\xi-Re_{1}),$$
	where $e_{1}:=(1,0)$, which indicates $supp \mathcal{F}_x(f_{R})\subset B^{2}(0,2R)$. Then we have
	\begin{eqnarray*}
		e^{it\Delta} f_{R}(x)&=&\int_{\mathbb{R}^{2}}e^{ix\xi-it|\xi|^{2}}\mathcal{F}_x(\varphi)(\xi-Re_{1})d\xi\\
     	&=& \int_{\mathbb{R}^{2}} e^{ix(\xi+Re_{1})}e^{-it|\xi+Re_{1}|^{2}}\mathcal{F}_x(\varphi)(\xi)d\xi	\\
		&=&e^{ixRe_{1}-itR^{2}}\int_{\mathbb{R}^{2}}e^{i(x-2tRe_{1})\xi}e^{-it|\xi|^{2}}\mathcal{F}_x(\varphi)(\xi)d\xi\\
		&=&e^{iRx_{1}-itR^{2}}e^{it\Delta} \varphi(x-2Rte_{1}).
	\end{eqnarray*}
	Set
	$$S_R:=\bigl\{x\in\mathbb{ R}^2:R/2\leq x_1\leq 3R/2,\ |x_2|\le 1/4\bigr\},\qquad|S_R|=R/2.$$
	Then
   $$|S_R|	=(3R/2-R/2)/2=R/2.$$
	Fix $x\in S_{R}$, and set
	$$t_x:=\frac{x_1}{2R}.$$
	Since $R/2\leq x_1\leq 3R/2$, we have $1/4\leq t_x\leq 3/4.$

	For sufficiently large $R\geq 2$, the interval
	$$I_x:=\left[t_x-\frac{1}{4R},t_x+\frac{1}{4R}\right]$$
	is contained in $(0,1)$. Put
	$$J_R:=\left\lfloor\frac{R}{2\pi}\right\rfloor,\qquad t_j:=t_x-\frac{ 1}{4R}+\frac{j\pi}{R^2},\qquad 0\leq j\leq J_R.$$
	Indeed,
	$$t_0=t_x-\frac{1}{4R},\qquad t_{J_R}\leq t_x-\frac{1}{4R}+\frac{R}{2\pi}\frac{\pi}{R^2}=t_x+\frac{1}{4R},$$
	so $t_j\in I_x\subset(0,1)$. Moreover,
	$$\frac{R}{2\pi}-1\leq J_R\leq\frac{R}{2\pi},\qquad\text{hence}\qquad J_R\approx R.$$

	For every $0\leq j\leq J_R$,
	\begin{eqnarray*}
		x_1-2Rt_j&=&x_1-2R\left(\frac{x_1}{2R}-\frac{1}{4R}+\frac{j\pi}{R^2}\right)\\
		&=&\frac{1}{2}-\frac{2j\pi}{R}.
	\end{eqnarray*}
	Since $0\leq j\leq J_R\leq  R/(2\pi)$,
	\begin{eqnarray*}
	|x-2Rt_je_1|=\left(	|x_1-2Rt_j|^2 +|x_{2}|^{2}\right)^{1/2}<1.
	\end{eqnarray*}
	Note that
	\begin{eqnarray}\label{eq--40}
		\left|(x-2Rt_je_1)\cdot\xi-t_j|\xi|^2\right|\leq |\xi|+|\xi|^2\leq\frac{\pi}{3}
	\end{eqnarray}
By \eqref{eq--40} and $0<t_j<1$, we have
	\begin{eqnarray*}
		\text{Re}e^{it_{j}\Delta}\varphi(x-2Rt_je_1)&\geq&\int_{|\xi|\leq1/2}\cos\left((x-2Rt_je_1)\cdot\xi-t_j|\xi|^2\right)\mathcal{F}_x(\varphi)(\xi) d\xi\\
		 &\geq& \pi/32+O(R^{-1000}).
	\end{eqnarray*}
	The Galilean identity yields
	$$e^{it_{j}\Delta}f_R(x)=e^{i(Rx_1-R^2t_j)}e^{it_{j}\Delta}\varphi(x-2Rt_{j}e_{1}).$$
	Furthermore,
	$$R^2(t_{j+1}-t_j)=\pi,\qquad e^{-iR^2t_{j+1}}=-e^{-iR^2t_j},$$
	and therefore
	\begin{eqnarray*}
		\bigl|e^{it_{j+1}\Delta}f_R(x)-e^{it_{j}\Delta}f_R(x)\bigr|&=&|e^{it_{j+1}\Delta}\varphi(x-2Rt_{j+1}e_{1}) +e^{it_{j}\Delta}\varphi(x-2Rt_{j}e_{1})|\\
		&\geq&\text{Re}e^{it_{j+1}\Delta}\varphi(x-2Rt_{j+1}e_{1})
		+\operatorname{Re}e^{it_{j}\Delta}\varphi(x-2Rt_{j+1}e_{1})\\
		&\geq& \pi/16.
	\end{eqnarray*}
	For every $x \in S_{R}$,
	\begin{eqnarray*}
		V^r_{[0,1]}\{e^{it\Delta}f_R(x)\}&\geq&\left(\sum_{j=0}^{J_R-1}\bigl|e^{it_{j+1}\Delta}f_R(x)-e^{it_{j}\Delta} f_R(x)\bigr|^r\right)^{1/r}\geq \pi J_R^{1/r}/16\gtrsim R^{1/r}.
	\end{eqnarray*}
which indicates
	\begin{eqnarray}\label{eq--41}
		\big\|V^r_{[0,1]}\{e^{it\Delta}f_R\}\big\|_{L^p(\mathbb{R}^2)}&\geq&\left(\int_{S_R}\big|V^r_{[0,1]}\{e^{it \Delta} f_R(x)\}\big |^pdx\right) ^{1/p} \gtrsim  R^{1/p+1/r}. 
	\end{eqnarray}
	Suppose that
	$$\bigl\|V^r_{[0,1]}\{e^{it \Delta}f_R\}\bigr\|_{L^p(\mathbb{R}^2)}\lesssim\|f_{R}\|_{H^s(\mathbb{R}^2)}.$$
	 By \eqref{eq--41}, we have
	$$R^{1/p+1/r}\lesssim\bigl\|V^r_{[0,1]}\{e^{it\Delta}f_R\}\bigr\|_{L^p(\mathbb{R}^2)}\lesssim\|f_R\|_{H^s(\mathbb{R}^2)}\approx R^s.$$
	Since this holds for arbitrarily large $R$, we have $s\ge \frac{1}{p}+\frac{1}{r}$ is necessary.
\end{proof}
\subsection{Counterexample in the range $s< \frac{1}{p}+\frac{2}{r}-\frac{1}{2}$}

In this subsection, we construct a counterexample which yields the necessary condition
\begin{eqnarray*}
	s\geq \frac{1}{p}+\frac{2}{r}-\frac{1}{2}.
\end{eqnarray*}
In the range $1\leq r\leq 2$ and $2\leq p\leq 6$, this gives the first necessary-condition branch in the phase diagram.

\begin{theorem}
Let $1\leq r\leq 2$ and $2\leq p\leq 6$.  If
$$\left\|V^r_{[0,1]}\{e^{it \Delta}f\}\right\|_{L^p(\mathbb{R}^2)}\leq C\|f\|_{H^s(\mathbb{R}^2)}$$
	holds for all $f\in H^s(\mathbb{R}^2)$, then necessarily
	$$s\geq \frac{1}{p}+\frac{2}{r}-\frac{1}{2}.$$
\end{theorem}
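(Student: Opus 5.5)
We will argue by an explicit counterexample at frequency $N\gg1$, in the spirit of the wave-packet construction used for the condition $s\ge \frac1p+\frac1r$. The heuristic is read off from the scaling in Theorem \ref{Th-3} and Lemma \ref{Le-5}. The exponent $\frac1p+\frac2r-\frac12$ would be attained by a solution of size about $N^{-1/2}$ that stays coherent during the whole time interval $[0,c]$, on a spatial set of area $\approx N$ (a strip of length $N$ and width $1$), while oscillating in time with period $\approx N^{-2}$. Each of the $\approx N^2$ half-periods then contributes an increment of size $N^{-1/2}$. This would give $V^r\gtrsim N^{-1/2}N^{2/r}$ on a set of area $N$, hence an $L^p$ norm $\gtrsim N^{\frac1p+\frac2r-\frac12}$. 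On the other hand $\|f\|_{H^s}\approx N^s$, since the $L^2$ mass is $N\cdot N^{-1}=1$.

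\textbf{Construction.} Choose $\phi,\psi\in\mathcal S(\mathbb R)$ with $\mathcal F\phi,\mathcal F\psi\ge0$ supported in $[-1/8,1/8]$ and equal to $1$ near $0$, as in the previous subsection. Put
$$f_N(x):=N^{-1/2}e^{iNx_1}\phi(x_1/N)\psi(x_2).$$
Then $\|f_N\|_2\approx1$, $\operatorname{supp}\mathcal F_x f_N\subset\{|\xi-Ne_1|\le 1/4\}$, and $\|f_N\|_{H^s}\approx N^s$. The Galilean identity from the previous proof gives
$$e^{it\Delta}f_N(x)=e^{i(Nx_1-N^2t)}\,\bigl(e^{it\Delta}\Phi_N\bigr)(x-2Nte_1),\qquad \Phi_N(y)=N^{-1/2}\phi(y_1/N)\psi(y_2).$$
Because the frequency support of $\Phi_N$ has width $1/N$ in $\xi_1$ and width $1/4$ in $\xi_2$, the phase $t|\xi|^2$ varies by $O(1/8)$ on that support for $0\le t\le c$. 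Exactly as in \eqref{eq--40}, we then get
$$\operatorname{Re}\,e^{-i(\text{const})}e^{it\Delta}\Phi_N(y)\gtrsim N^{-1/2}$$
uniformly for $|y_1|\le cN$, $|y_2|\le c$ and $0\le t\le c$. The constant phase $e^{-it\xi_1^{0}{}^2}$-type factor is harmless, since it is essentially frozen at $\xi_1\approx0$ after the Galilean shift.

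\textbf{Lower bound on the variation.} Take the set $S_N:=\{|x_1|\le cN/2,\ |x_2|\le c\}$, which has $|S_N|\approx N$. For $x\in S_N$ and $0\le t\le c/4$ we have $|x_1-2Nt|\le cN$, so the envelope stays $\gtrsim N^{-1/2}$ with stable real part. Take the times $t_j=j\pi/N^2$ for $0\le j\le J_N\approx N^2$. Since $e^{-iN^2t_{j+1}}=-e^{-iN^2t_j}$, the same computation as before gives
$$|e^{it_{j+1}\Delta}f_N(x)-e^{it_j\Delta}f_N(x)|\gtrsim N^{-1/2}.$$
Hence
$$V^r_{[0,1]}\{e^{it\Delta}f_N(x)\}\gtrsim N^{-1/2}N^{2/r}\quad\text{for } x\in S_N,$$
and integrating over $S_N$ yields $\|V^r\|_{L^p}\gtrsim N^{\frac1p+\frac2r-\frac12}$. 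Comparing with $N^s$ and letting $N\to\infty$ gives the claim.

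\textbf{Main obstacle.} The delicate step is the uniform lower bound for the envelope on the full time interval $[0,c]$. Two effects must be controlled: the dispersion in $x_2$, which is harmless because the $\xi_2$-width is $O(1)$ and the time is $O(1)$, and the fact that the $\phi(\cdot/N)$ profile must not shift out of $S_N$. The latter is controlled by the window $|x_1-2Nt|\le cN$. Along the way we will keep track of the precise constants ensuring that the cosine in the analogue of \eqref{eq--40} stays bounded below.
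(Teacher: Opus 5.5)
Your proposal is correct and is essentially the paper's proof: your $f_N$ is the paper's $f_R$ with the tensor choice $a(\eta)=\mathcal F\phi(\eta_1)\mathcal F\psi(\eta_2)$, and you use the same $\pi/N^2$-spaced times, exploiting the sign flip of $e^{-iN^2t}$ on a strip of area $\approx N$ where the envelope has real part $\gtrsim N^{-1/2}$. The only differences are cosmetic: you pass through the Galilean identity and work near $t=0$ on a centered strip, whereas the paper changes variables directly and uses $\Omega_R$ with $t\in[1/4,1/2]$; note also that $J_N$ should be taken $\approx cN^2$ so that $t_{J_N}\le c/4$, which still gives $J_N\approx N^2$.
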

\begin{proof}
	Choose a nonzero function $a\in C_c^\infty(\mathbb{R}^2)$ such that
	\begin{eqnarray*}
		a\geq0,\quad\int_{\mathbb{R}^{2}}a(\xi)d\xi>0,\quad\operatorname{supp}a(\xi)\subset\left\{\eta\in\mathbb{R}^2:|\xi_1|\leq\frac{1}{4},\|\xi_2|\leq\frac{1}{4}\right\}.
	\end{eqnarray*}
 For $R\gg 1$, define $f_R$, by
	\begin{eqnarray*}
		\mathcal{F}_x(f_R)(\xi_1,\xi_2):=R^{1/2}a\bigl(R(\xi_1-R),\xi_2\bigr),
	\end{eqnarray*}
which implies the Fourier support of $f_R$ is contained in the rectangle
	\begin{eqnarray*}
		\operatorname{supp}\mathcal{F}_x(f_R)\subset\left\{(\xi_1,\xi_2):|\xi_1-R|\leq\frac{1}{4R},\quad|\xi_2|\leq\frac{1}{4}\right\}.
	\end{eqnarray*}
	In particular, $|\xi|\approx R\text{ on }\operatorname{supp}\mathcal{F}_x(f_R)$.
	Making the change of variables	and using Plancherel's theorem, we obtain
	\begin{eqnarray*}
		\|f_R\|_2^2&\approx&\int_{\mathbb{R}^2}R\left|a\bigl(R(\xi_1-R),\xi_2\bigr)\right|^2
	    d\xi\\
		&=&\int_{\mathbb{R}^2}|a(\eta_1,\eta_2)|^2d\eta\\
		&=&\|a\|_2^2.
	\end{eqnarray*}
	Therefore, $\|f_R\|_2\approx 1$. Since $|\xi|\approx R$ on the Fourier support of $f_R$, we also have
	\begin{eqnarray*}
		\|f_R\|_{H^s(\mathbb{R}^2)}\approx R^s\|f_R\|_2 \approx R^s.
	\end{eqnarray*}

Let	$\xi_1=R+\frac{\eta_1}{R}, \xi_2=\eta_2,$ we obtain
	\begin{eqnarray}\label{eq--43}
		x_1\xi_1+x_2\xi_2-t(\xi_1^2+\xi_2^2)=Rx_1-R^2t+\eta_1\left(\frac{x_1}{R}-2t\right)+x_2\eta_2-t\eta_2^2-\frac{t\eta_1^2}{R^2}.
	\end{eqnarray}
		We next calculate $e^{it\Delta}f_R$. By\eqref{eq--43}, we obtain
	\begin{eqnarray}\label{eq--44}
		e^{it\Delta}f_R(x)&=&R^{1/2}\int_{\mathbb{R}^2}e^{i(x_1\xi_1+x_2\xi_2-t(\xi_1^2+\xi_2^2))}a\bigl(R(\xi_1-R),\xi_2\bigr)d\xi\\
		&=&R^{-1/2}e^{i(Rx_1-R^2t)}A_R(x,t),\notag
	\end{eqnarray}
	where
	\begin{eqnarray*}
		A_R(x,t)=\int_{\mathbb{R}^2}a(\eta_1,\eta_2)\exp\left(i\Phi_R(x,t,\eta)\right)d\eta
	\end{eqnarray*}
	and
	\begin{eqnarray*}
		\Phi_R(x,t,\eta)=\eta_1\left(\frac{x_1}{R}-2t\right)+x_2\eta_2-t\eta_2^2-\frac{t\eta_1^2}{R^2}.
	\end{eqnarray*}

	Consider the spatial region
	\begin{eqnarray*}
		\Omega_R=\left\{x\in\mathbb{R}^2:\frac{R}{2}\leq x_1\leq\frac{3R}{2},\quad|x_2|\leq 1\right\}
	\end{eqnarray*}
	and the time interval
	\begin{eqnarray*}
		I=\left[\frac{1}{4},\frac{1}{2}\right].
	\end{eqnarray*}
	If $x\in\Omega_R$ and $t\in I$, then $\left|\frac{x_1}{R}-2t\right|\leq 1.$ For $\eta\in\operatorname{supp}a$, it follows that
	\begin{eqnarray*}
		|\Phi_R(x,t,\eta)|&\leq&|\eta_1|\left|\frac{x_1}{R}-2t\right|+|x_2||\eta_2|+t|\eta_2|^2+\frac{t|\eta_1|^2}{R^2}\\
		&\leq&\frac{1}{2}+\frac{1}{16}=\frac{9}{16}\leq \frac{\pi}{3}.
	\end{eqnarray*}
	Thus, $\cos\Phi_R(x,t,\eta)\geq\frac{1}{2}$, whenever $x\in\Omega_R$, $t\in I$, and $\eta\in\operatorname{supp}a$. Since $\int_{\mathbb{R}^{2}}a>0$, we conclude that
	\begin{eqnarray*}
		\operatorname{Re}A_R(x,t)&=&\int_{\mathbb{R}^2}a(\eta)\cos\Phi_R(x,t,\eta)d\eta\\
		&\geq&\frac{1}{2}\int_{\mathbb{R}^2}a(\eta)d\eta>0
	\end{eqnarray*}
	uniformly for $x\in\Omega_R$ and $t\in I$.

	We now exploit the rapid oscillation of the
	$e^{-iR^2t}$. Let
	\begin{eqnarray*}
		t_j=\frac14+\frac{j\pi}{R^2},\qquad j=0,1,\ldots,J_R,
	\end{eqnarray*}
	where $J_R=\left\lfloor\frac{R^2}{4\pi}\right\rfloor. $ 	Then
	\begin{eqnarray*}
		\frac{1}{4}\leq t_0<t_1<\cdots<t_{J_R}\leq\frac{1}{2},\qquad J_R\approx R^2,
	\end{eqnarray*}
	and
	\begin{eqnarray}\label{eq--45}
		e^{-iR^2t_j}=e^{-iR^2/4}e^{-ij\pi}=e^{-iR^2/4}(-1)^j.
	\end{eqnarray}

	Hence, for every $1\leq j\leq J_R$,  By
	\eqref{eq--44} and \eqref{eq--45}, we have
	\begin{eqnarray*}
		&&\left|e^{it_j\Delta}f_R(x)-e^{it_{j-1}\Delta}f_R(x)\right|\\
		&=&R^{-1/2}\left|(-1)^jA_R(x,t_j)-(-1)^{j-1}A_R(x,t_{j-1})\right|\\
		&=&R^{-1/2}\left|A_R(x,t_j)+A_R(x,t_{j-1})\right|\\
		&\geq&R^{-1/2}\operatorname{Re}\left(A_R(x,t_j)+A_R(x,t_{j-1})\right)\\
		&\geq&2c_aR^{-1/2}.
	\end{eqnarray*}
	Using this particular time sequence in the definition of the
	$r$-variation gives
	\begin{eqnarray*}
		V^r_{[0,1]}\{e^{it\Delta}f_R(x)\}&\geq&\left(\sum_{j=1}^{J_R}\left|e^{it_j\Delta}f_R(x)-e^{it_{j-1}\Delta}f_R(x)\right|^r\right)^{1/r}\\
		&\geq&2c_aR^{-1/2}J_R^{1/r}\\
		&\gtrsim&R^{2/r-1/2}
	\end{eqnarray*}
	for every $x\in\Omega_R$. Since $|\Omega_R|\approx R$, we obtain
	\begin{eqnarray}\label{eq--46}
		\left\|V^r_{[0,1]}\{e^{it\Delta}f_R\}\right\|_{L^p(\mathbb{R}^2)}&\geq&\left\|V^r_{[0,1]}\{e^{it\Delta}f_R\}\right\|_{L^p(\Omega_R)}\notag\\
		&\gtrsim&R^{2/r-1/2}|\Omega_R|^{1/p}\notag\\
		&\gtrsim&R^{1/p+2/r-1/2}.
	\end{eqnarray}
Suppose that
\begin{eqnarray}\label{eq--47}
	\bigl\|V^r_{[0,1]}\{e^{it\Delta}f_R\}\bigr\|_{L^p(\mathbb{R}^2)}\lesssim\|f_{R}\|_{H^s(\mathbb{R}^2)}\lesssim R^{s}
\end{eqnarray}
	Combining \eqref{eq--46} and \eqref{eq--47}, we obtain
	\begin{eqnarray*}
		R^{1/p+2/r-1/2}\lesssim R^s.
	\end{eqnarray*}
	Let $R\rightarrow\infty$ yields
	\begin{eqnarray*}
		s\geq\frac{1}{p}+\frac{2}{r}-\frac{1}{2}.
	\end{eqnarray*}
	This proves the proposition.
\end{proof}

\subsection{Counterexample in the range $s<2/r-2/p$}

The following construction gives the necessary condition $s\ge 2/r-2/p$, which is the relevant lower bound in the range $1\leq r\leq 2$ and $p\geq 6$.
\begin{theorem}\label{th-5.4}
		Let $1\leq r\leq 2$ and $p\geq 6$.  If
	$$\left\|V^r_{[0,1]}\{e^{it\Delta}f  \}\right\|_{L^p(\mathbb{R}^2)}\lesssim\|f\|_{H^s(\mathbb{R}^2)}$$
	holds for all $f\in H^s(\mathbb{R}^2)$, then necessarily
	$$s\ge \frac{2}{r}-\frac{2}{p}.$$
\end{theorem}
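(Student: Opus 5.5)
We will prove Theorem \ref{th-5.4} with a radial counterexample. The initial datum is concentrated on a thin annulus in frequency, of radius $R$ and thickness $R^{-1}$. For such data the solution near the origin stays coherent of size $\approx1$ for all $t\in[0,1]$ and carries the fast phase $e^{-iR^2t}$. Thus, on the small ball $|x|\lesssim R^{-1}$, we obtain about $R^2$ increments of unit size. This should give
$$\bigl\|V^r_{[0,1]}\{e^{it\Delta}f_R\}\bigr\|_{L^p}\gtrsim R^{2/r}\,|B(0,cR^{-1})|^{1/p}\approx R^{2/r-2/p},\qquad \|f_R\|_{H^s}\approx R^s,$$
which forces $s\geq 2/r-2/p$ as $R\to\infty$.

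\textbf{Construction.} Fix $a\in C_c^\infty(\mathbb{R})$ with $a\geq0$, $\int a>0$ and $\operatorname{supp}a\subset[-1/8,1/8]$, and set
$$\mathcal{F}_x(f_R)(\xi):=a\bigl(R(|\xi|-R)\bigr).$$
In polar coordinates the support of $\mathcal{F}_x(f_R)$ has measure $\approx 2\pi R\cdot R^{-1}\approx1$. By Plancherel, $\|f_R\|_2\approx1$, and since $|\xi|\approx R$ on the support, $\|f_R\|_{H^s}\approx R^s$.

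\textbf{Key computation.} Write $|\xi|^2=R^2+(|\xi|-R)(|\xi|+R)$. On the support we have $\bigl||\xi|^2-R^2\bigr|\leq\frac18\cdot\frac{2R+1}{R}\leq\frac38$. For $|x|\leq c R^{-1}$ with $c$ small, also $|x\cdot\xi|\leq 3c$. Hence
$$e^{it\Delta}f_R(x)=\frac{e^{-itR^2}}{(2\pi)^2}B_R(x,t),\qquad B_R(x,t):=\int e^{i(x\cdot\xi-t(|\xi|^2-R^2))}a\bigl(R(|\xi|-R)\bigr)\,d\xi,$$
where the total phase in $B_R$ has modulus at most $\pi/3$ for all $t\in[0,1]$. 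This gives
$$\operatorname{Re}B_R(x,t)\geq \tfrac12\int a\bigl(R(|\xi|-R)\bigr)d\xi\geq c_a>0,$$
uniformly in $R$, $t\in[0,1]$ and $|x|\leq cR^{-1}$. This uniform lower bound is the main point to check. It works because the frequency thickness $R^{-1}$ exactly cancels the factor $2R$ in $|\xi|^2-R^2$, so there is no dispersion at the origin up to time $1$. One must also make sure that the polar-coordinate measure $\approx R\cdot R^{-1}$ is of order one and does not degenerate.

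\textbf{Conclusion.} Exactly as in the proof of the $\frac1p+\frac2r-\frac12$ counterexample, take
$$t_j=\frac{j\pi}{R^2},\qquad 0\leq j\leq J_R:=\lfloor R^2/\pi\rfloor ,$$
so that $e^{-iR^2t_j}=(-1)^j$. Then
$$\bigl|e^{it_j\Delta}f_R(x)-e^{it_{j-1}\Delta}f_R(x)\bigr|\gtrsim\operatorname{Re}\bigl(B_R(x,t_j)+B_R(x,t_{j-1})\bigr)\geq 2c_a .$$
It follows that $V^r_{[0,1]}\{e^{it\Delta}f_R(x)\}\gtrsim J_R^{1/r}\approx R^{2/r}$ on $B(0,cR^{-1})$. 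Integrating over this ball gives the lower bound $R^{2/r-2/p}$ for the $L^p$ norm. Comparing with $\|f_R\|_{H^s}\approx R^s$ and letting $R\to\infty$ yields $s\geq 2/r-2/p$. The argument does not actually use $r\leq2$ or $p\geq6$; those restrictions only indicate where this bound is the dominant branch of $s(p,r)$.
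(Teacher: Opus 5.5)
Your proposal is correct and is essentially the paper's own proof. The shared steps are a radial datum whose Fourier transform lives on the annulus $\bigl||\xi|-R\bigr|\lesssim R^{-1}$, factoring out the carrier $e^{-iR^2t}$, a uniform lower bound on the real part of the remaining amplitude for $|x|\lesssim R^{-1}$ and $t\in[0,1]$, and the alternating times $t_j=j\pi/R^2$, which together give $R^{2/r}|B(0,cR^{-1})|^{1/p}\approx R^{2/r-2/p}$. The differences are only cosmetic: your $a$ is supported in $[-1/8,1/8]$ rather than $(0,1/100)$, and you do not use the normalizing constant $c_R$. Your remark that the restrictions $r\le 2$ and $p\ge 6$ are never used in the argument is also accurate.
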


\begin{proof}
	Choose a nonzero nonnegative function $a\in C_c^\infty((0,\frac{1}{100}))$ such that $\int_{\mathbb{R}}a >0 $. For every sufficiently large $R$, define $f_R$ by
	\begin{eqnarray}\label{eq--48}
		\mathcal{F}_x(f_R)(\xi)=c_R a\left(R(|\xi|-R)\right),\qquad\xi\in\mathbb{R}^2,
	\end{eqnarray}
	where the positive constant $c_R$ is chosen so that $\|f_R\|_2=1$. Since $a$ is supported in $(0,\frac{1}{100})$, we have
	$$\operatorname{supp}\mathcal{F}_x(f_R)\subset\left\{\xi\in\mathbb{R}^2:R\le|\xi|\le R+\frac{1}{100R}\right\}.$$
    By  the change of variables, we can obtain
	$$\rho=|\xi|,\qquad u=R(\rho-R),\qquad\rho=R+\frac{u}{R},\qquad d\rho=\frac{du}{R},$$
	we obtain
	\begin{eqnarray}\label{eq--49}
		\|\mathcal{F}_x(f_R)\|_2^2\approx c_R^2\int_R^{R+1/(100R)}\left|a\left(R(\rho-R)\right)\right|^2\rho d\rho=c_R^2\int_0^{1/100}|a(u)|^2\left(1+\frac{u}{R^2}\right)du\approx c_R^2.
	\end{eqnarray}
	Consequently, after choosing the Fourier normalization consistently, $c_R\approx1$ and $\|f_R\|_2=1$. By the Fourier representation,
	$$e^{it\Delta}f_R(x)=c_R\int_{\mathbb{R}^2}e^{i(x\cdot\xi-t|\xi|^2)}a\left(R(|\xi|-R)\right)d\xi.$$
	Factoring out the principal temporal oscillation $e^{-iR^2t}$ gives
	\begin{eqnarray}	\label{eq--50}
		e^{it\Delta}f_R(x)=e^{-iR^2t}A_R(x,t),
	\end{eqnarray}
	where
	$$A_R(x,t)=c_R\int_{\mathbb{R}^2}e^{i(x\cdot\xi-t(|\xi|^2-R^2))}a\left(R(|\xi|-R)\right)d\xi.$$
 Write
	$$|\xi|=R+\frac{u}{R},\qquad0\le u\le \frac{1}{100}.$$
	Then
	$$|\xi|^2-R^2=2u+\frac{u^2}{R^2},$$
	and hence, for all sufficiently large $R$,
	$$\left||\xi|^2-R^2\right|\le \frac{3}{100}.$$
    For
	$|x|\le \frac{1}{100R}$, then
	$$|x\cdot\xi|\le\frac{1}{100R}\left(R+\frac{1}{100R}\right)\le\frac{1}{50}.$$
	Therefore, for every $0\le t\le1$ and every
	$\xi\in\operatorname{supp}\mathcal{F}_x(f_R)$,
	$$\left|x\cdot\xi-t(|\xi|^2-R^2)\right|\le\frac{1}{20}.$$
		Indeed, using $a\ge0$, we have
	\begin{eqnarray*}
		\operatorname{Re}A_R(x,t)&\ge&c_R\cos\left(\frac{\pi}{6}\right)\int_{\mathbb{R}^2}a\left(R(|\xi|-R)\right)d\xi\gtrsim c_{0},
	\end{eqnarray*}
	which indicates
	\begin{eqnarray}\label{eq--51}
		\operatorname{Re}A_R(x,t)\ge c_0\qquad\text{whenever}|x|\le\frac{1}{100R}\quad\text{and}\quad0\le t\le1,
	\end{eqnarray}
	where $c_0>0$ is independent of $R$.

	We now select a sequence of times at which the principal carrier
	$e^{-iR^2t}$ alternates its sign. Let
	$$t_j=\frac{j\pi}{R^2},\qquad j=0,1,\ldots,J_R,\qquad J_R=\left\lfloor\frac{R^2}{\pi}\right\rfloor.$$
	Then $0\le t_j\le1$, $J_R\approx R^2$, and
	$$e^{-iR^2t_j}=e^{-ij\pi}=(-1)^j.$$
	Consequently, by \eqref{eq--50},
	\begin{eqnarray}\label{eq--52}
		\left|e^{it_{j+1}\Delta}f_R(x)-e^{it_j\Delta}f_R(x)\right|=\left|A_R(x,t_{j+1})+A_R(x,t_j)\right|\ge 2c_0
	\end{eqnarray}
	for every $j=0,\ldots,J_R-1$ and every $|x|\le c_1/R$. In the last inequality, we used \eqref{eq--51} and
	$$|z_1+z_2|\geq \operatorname{Re}z_1+\operatorname{Re}z_2$$
	when both real parts are positive.

	Take the above sequence of times in the definition of the $r$-variation and using \eqref{eq--52}, we obtain
	\begin{eqnarray}\label{eq--53}
		V^r_{[0,1]}\{e^{it\Delta}f_R(x)\}&\geq&\left(\sum_{j=0}^{J_R-1}\left|e^{it_{j+1}\Delta}f_R(x)-e^{it_j\Delta}f_R(x)\right|^r\right)^{1/r}\notag\\
		&\ge&2c_0J_R^{1/r}\gtrsim R^{2/r},\qquad|x|\le\frac{1}{100R}.
	\end{eqnarray}
	Define
	$$\Omega_R=\left\{x\in\mathbb{R}^2:|x|\le\frac{1}{100R}\right\}.$$
	Since
	$$|\Omega_R|=\pi c_1^2R^{-2}\approx R^{-2},$$
	the pointwise lower bound \eqref{eq--53} gives
 	\begin{eqnarray}\label{eq--54}
		\left\|V^r_{[0,1]}\{e^{it\Delta}f_R\}\right\|_{L^p(\mathbb{R}^2)}
		&\ge&\left\|V^r_{[0,1]}\{e^{it\Delta}f_R \}\right\|_{L^p(\Omega_R)} \gtrsim R^{2/r}|\Omega_R|^{1/p}\approx R^{2/r-2/p}.
	\end{eqnarray}

	It remains to calculate the Sobolev norm of $f_R$. By
	\eqref{eq--48}, the Fourier transform of $f_R$ is supported where
	$|\xi|\approx R$. Combining this observation with the normalization
	\eqref{eq--49}, we obtain
	\begin{eqnarray}
		\|f_R\|_{H^s(\mathbb{R}^2)}
		\approx
		R^s.
		\label{eq--55}
	\end{eqnarray}

	Applying the assumed variation estimate to $f_R$ and using
	\eqref{eq--54} and \eqref{eq--55}, we conclude that
	$$
	R^{2/r-2/p}
	\lesssim
	\left\|
	V^r_{[0,1]}\{e^{it\Delta}f_R \}
	\right\|_{L^p(\mathbb{R}^2)}
	\lesssim
	\|f_R\|_{H^s(\mathbb{R}^2)}
	\approx
	R^s.
	$$
	Since this inequality holds for arbitrarily large $R$, it is necessary
	that
	$$s\ge\frac{2}{r}-\frac{2}{p}.$$
	This proves the claimed necessary condition.
\end{proof}
\noindent\textbf{Data availability} This manuscript no associate data.

\noindent\textbf{A conflict of interest statement} This manuscript is original, has not been formally published,and is not under consideration for publication elsewhere. Both authors have approved this submission. The authors declare no competing interests.

\noindent\textbf{Statement on the use of AI.}
The authors developed the main ideas, wrote the proofs, and
determined the overall direction of this work.
We used GPT-6 Astra to help check the scalings and translations needed for the counterexample construction associated with Theorem \ref{th-5.4} and to refine the wording of the manuscript. Figures 1 and 2 were generated by GPT-6 Astra based on our descriptions of their intended meaning and scope. The proofs for Section 3 and Section 4 in this paper were completed by the authors.
We take full responsibility for the mathematical content of
the paper.


\begin{thebibliography}{99}

\bibitem{BBCR2011}
J. A. Barcel\'o, J. Bennett, A. Carbery, and K. M. Rogers,
\emph{On the dimension of divergence sets of dispersive equations},
Math. Ann. \textbf{349} (2011), no. 3, 599--622.
\newblock DOI: 10.1007/s00208-010-0529-z.

\bibitem{BORS2022}
D. Beltran, R. Oberlin, L. Roncal, A. Seeger, and B. Stovall,
\emph{Variation bounds for spherical averages},
Math. Ann. \textbf{382} (2022), nos. 1--2, 459--512.
\newblock DOI: 10.1007/s00208-021-02218-2.

\bibitem{BL1976}
J. Bergh and J. L\"ofstr\"om,
\emph{Interpolation Spaces: An Introduction},
Grundlehren der Mathematischen Wissenschaften, vol. 223,
Springer-Verlag, Berlin--New York, 1976.

\bibitem{B1988a}
J. Bourgain,
\emph{On the maximal ergodic theorem for certain subsets of the integers},
Israel J. Math. \textbf{61} (1988), no. 1, 39--72.
\newblock DOI: 10.1007/BF02776301.


\bibitem{B1992}
J. Bourgain,
\emph{A remark on Schr\"odinger operators},
Israel J. Math. \textbf{77} (1992), no. 1--2, 1--16.
\newblock DOI: 10.1007/BF02808007.

\bibitem{B1995}
J. Bourgain,
\emph{Some new estimates on oscillatory integrals},
in \emph{Essays on Fourier Analysis in Honor of Elias M. Stein},
Princeton Math. Ser., vol. 42,
Princeton University Press, Princeton, NJ, 1995, pp. 83--112.
\newblock DOI: 10.1515/9781400852949.83.

\bibitem{B2013}
J. Bourgain,
\emph{On the Schr\"odinger maximal function in higher dimension},
Proc. Steklov Inst. Math. \textbf{280} (2013), no. 1, 46--60.
\newblock DOI: 10.1134/S0081543813010045.

\bibitem{B1988b}
J. Bourgain,
\emph{On the pointwise ergodic theorem on $L^p$ for arithmetic sets},
Israel J. Math. \textbf{61} (1988), no. 1, 73--84.
\newblock DOI: 10.1007/BF02776302.



\bibitem{B1989}
J. Bourgain,
\emph{Pointwise ergodic theorems for arithmetic sets},
Publ. Math. Inst. Hautes \`Etudes Sci. \textbf{69} (1989), 5--45.
\newblock DOI: 10.1007/BF02698838.

\bibitem{B2016}
J. Bourgain,
\emph{A note on the Schr\"odinger maximal function},
J. Anal. Math. \textbf{130} (2016), no. 1, 393--396.
\newblock DOI: 10.1007/s11854-016-0042-8.

\bibitem{BD2015}
J. Bourgain and C. Demeter,
\emph{The proof of the $\ell^2$ decoupling conjecture},
Ann. of Math. (2) \textbf{182} (2015), no. 1, 351--389.
\newblock DOI: 10.4007/annals.2015.182.1.9.

\bibitem{BG2011}
J. Bourgain and L. Guth,
\emph{Bounds on oscillatory integral operators based on
	multilinear estimates},
Geom. Funct. Anal. \textbf{21} (2011), no.~6, 1239--1295.
\newblock DOI: 10.1007/s00039-011-0140-9.

\bibitem{BMSTW2018}
J. Bourgain, M. Mirek, E. M. Stein, and B. Wr\'obel,
\emph{On dimension-free variational inequalities for averaging operators
in $\mathbb{R}^d$},
Geom. Funct. Anal. \textbf{28} (2018), no. 1, 58--99.
\newblock DOI: 10.1007/s00039-018-0433-3.

\bibitem{CJRW2000}
J. T. Campbell, R. L. Jones, K. Reinhold, and M. Wierdl,
\emph{Oscillation and variation for the Hilbert transform},
Duke Math. J. \textbf{105} (2000), no. 1, 59--83.
\newblock DOI: 10.1215/S0012-7094-00-10513-3.

\bibitem{CJRW2003}
J. T. Campbell, R. L. Jones, K. Reinhold, and M. Wierdl,
\emph{Oscillation and variation for singular integrals
	in higher dimensions},
Trans. Amer. Math. Soc. \textbf{355} (2003), no. 5, 2115--2137.
\newblock DOI: 10.1090/S0002-9947-02-03189-6.

\bibitem{C1980}
L. Carleson,
\emph{Some analytic problems related to statistical mechanics},
in \emph{Euclidean Harmonic Analysis (Proc. Sem., Univ. Maryland,
College Park, Md., 1979)}, Lecture Notes in Math., vol. 779,
Springer, Berlin, 1980, pp. 5--45.
\newblock DOI: 10.1007/BFb0087666.

\bibitem{CDH2021}Y. Chen, Y. Ding, G. Hong and H. Liu, Variational inequalities for the commutators of rough
operators with BMO functions, Sci. China Math. 64 (2021), 2437–2460.

\bibitem{CDHL2018} Y. Chen, Y. Ding, G. Hong and H. Liu, Weighted jump and variational inequalities for rough
operators, J. Funct. Anal. 274 (2018), 2446–2475.

\bibitem{CHL2023}Y. Chen, G. Hong and J. Li, Quantitative weighted bounds for the q-variation of singular
integrals with rough kernels, J. Fourier Anal. Appl. 29 (2023), Paper No. 31, 50 pp.

\bibitem{DK1982}
B. E. J. Dahlberg and C. E. Kenig,
\emph{A note on the almost everywhere behavior of solutions to the
Schr\"odinger equation},
in \emph{Harmonic Analysis (Minneapolis, Minn., 1981)},
Lecture Notes in Math., vol. 908, Springer, Berlin--New York, 1982,
pp. 205--209.
\newblock DOI: 10.1007/BFb0093289.

\bibitem{DLTT2008}
C. Demeter, M. T. Lacey, T. Tao, and C. Thiele,
\emph{Breaking the duality in the return times theorem},
Duke Math. J. \textbf{143} (2008), no. 2, 281--355.
doi:10.1215/00127094-2008-020.

\bibitem{Du2017T}
X. Du,
\emph{A sharp Schr\"odinger maximal estimate in $\mathbb{R}^{2}$},
Ph.D. dissertation, University of Illinois at Urbana-Champaign, 2017.

\bibitem{DGL2017}
X. Du, L. Guth, and X. Li,
\emph{A sharp Schr\"odinger maximal estimate in $\mathbb{R}^2$},
Ann. of Math. (2) \textbf{186} (2017), no. 2, 607--640.
\newblock DOI: 10.4007/annals.2017.186.2.5.

\bibitem{DGLZ2018}
X. Du, L. Guth, X. Li, and R. Zhang,
\emph{Pointwise convergence of Schr\"odinger solutions and
	multilinear refined Strichartz estimates},
Forum Math. Sigma \textbf{6} (2018), e14, 18 pp.
\newblock DOI: 10.1017/fms.2018.11.

\bibitem{DL2019}
X. Du and X. Li,
\emph{$l^p$ decoupling for restricted $k$-broadness},
Math. Z. \textbf{292} (2019), no.~1--2, 725--737.
\newblock DOI: 10.1007/s00209-018-2178-y.

\bibitem{DKWZ2020}
X. Du, J. Kim, H. Wang, and R. Zhang,
\emph{Lower bounds for estimates of the Schr\"odinger maximal function},
Math. Res. Lett. \textbf{27} (2020), no. 3, 687--692.
\newblock DOI: 10.4310/MRL.2020.v27.n3.a4.

\bibitem{DZ2019}
X. Du and R. Zhang,
\emph{Sharp $L^2$ estimates of the Schr\"odinger maximal function in
higher dimensions},
Ann. of Math. (2) \textbf{189} (2019), no. 3, 837--861.
\newblock DOI: 10.4007/annals.2019.189.3.4.

\bibitem{GRY2020}
S. Guo, J. Roos, and P.-L. Yung,
\emph{Sharp variation-norm estimates for oscillatory integrals related
to Carleson's theorem},
Anal. PDE \textbf{13} (2020), no. 5, 1457--1500.
\newblock DOI: 10.2140/apde.2020.13.1457.

\bibitem{H1960}L. Hörmander, Estimates for translation invariant operators in $L^p$ spaces, Acta Math. 104 (1960), 93–140.

\bibitem{JSW2008}
R. L. Jones, A. Seeger, and J. Wright,
\emph{Strong variational and jump inequalities in harmonic analysis},
Trans. Amer. Math. Soc. \textbf{360} (2008), no. 12, 6711--6742.
\newblock DOI: 10.1090/S0002-9947-08-04538-8.

\bibitem{KT1998} M. Keel, T. Tao, Endpoint Strichartz estimates, Amer. J. Math. 120 (1998) 955–980.

\bibitem{K2019}
V. Kova\v{c},
\emph{Fourier restriction implies maximal and variational Fourier restriction},
J. Funct. Anal. \textbf{277} (2019), no. 10, 3355--3372.
\newblock DOI: 10.1016/j.jfa.2019.03.015.



\bibitem{L1976}
D. L\'epingle,
\emph{La variation d'ordre $p$ des semi-martingales},
Z. Wahrscheinlichkeitstheorie verw. Gebiete \textbf{36} (1976), no. 4,
295--316.
\newblock DOI: 10.1007/BF00532696.

\bibitem{L2006}
S. Lee,
\emph{On pointwise convergence of the solutions to Schr\"odinger
equations in $\mathbb{R}^2$},
Int. Math. Res. Not. \textbf{2006} (2006), Art. ID 32597, 21 pp.
\newblock DOI: 10.1155/IMRN/2006/32597.

\bibitem{LRS2012}
S. Lee, K. M. Rogers, and A. Seeger,
\emph{Improved bounds for Stein's square functions},
Proc. Lond. Math. Soc. (3) \textbf{104} (2012), no. 6, 1198--1234.
\newblock DOI: 10.1112/plms/pdr067.

\bibitem{LRS2013}
S. Lee, K. M. Rogers, and A. Seeger,
\emph{On space--time estimates for the Schr\"odinger operator},
J. Math. Pures Appl. (9) \textbf{99} (2013), no. 1, 62--85.
\newblock DOI: 10.1016/j.matpur.2012.06.003.

\bibitem{LRV2011}
S. Lee, K. M. Rogers, and A. Vargas,
\emph{An endpoint space--time estimate for the Schr\"odinger equation},
Adv. Math. \textbf{226} (2011), no. 5, 4266--4285.
\newblock DOI: 10.1016/j.aim.2010.12.006.

\bibitem{MT2014}
A. Mas and X. Tolsa,
\emph{Variation for the Riesz transform and uniform rectifiability},
J. Eur. Math. Soc. \textbf{16} (2014), no. 11, 2267--2321.
\newblock DOI: 10.4171/JEMS/487.

\bibitem{LMY2024}
J. Li, C. Miao, and A. Yu,
\emph{The space-time estimates for the Schr\"odinger equation},
arXiv:2402.13539 [math.CA], 2024.

\bibitem{L2023}
X. Li,
\emph{A Stein--Tomas type estimate and a decoupling inequality},
arXiv:2309.12835 (2023).
\newblock DOI: 10.48550/arXiv.2309.12835.

\bibitem{MVV1996}
A. Moyua, A. Vargas, and L. Vega,
\emph{Schr\"odinger maximal function and restriction properties of the Fourier transform},
International Mathematics Research Notices \textbf{1996} (1996), no. 16, 793--815.
doi:10.1155/S1073792896000499.



\bibitem{MST2017}
M. Mirek, E. M. Stein, and B. Trojan,
\emph{$\ell^p(\mathbb{Z}^d)$-estimates for discrete operators of Radon type:
variational estimates},
Invent. Math. \textbf{209} (2017), no. 3, 665--748.
\newblock DOI: 10.1007/s00222-017-0718-4.

\bibitem{MSZ2020a}
M. Mirek, E. M. Stein, and P. Zorin-Kranich,
\emph{Jump inequalities via real interpolation},
Math. Ann. \textbf{376} (2020), nos. 1--2, 797--819.
\newblock DOI: 10.1007/s00208-019-01889-2.



\bibitem{MSZ2020b}
M. Mirek, E. M. Stein, and P. Zorin-Kranich,
\emph{A bootstrapping approach to jump inequalities and their applications},
Anal. PDE \textbf{13} (2020), no. 2, 527--558.
\newblock DOI: 10.2140/apde.2020.13.527.

\bibitem{MSZ2020c}
M. Mirek, E. M. Stein, and P. Zorin-Kranich,
\emph{Jump inequalities for translation-invariant operators of Radon type on
$\mathbb{Z}^d$},
Adv. Math. \textbf{365} (2020), Art. 107065, 57 pp.
\newblock DOI: 10.1016/j.aim.2020.107065.

\bibitem{LR2017}
R. Luc\`a and K. M. Rogers,
\emph{Coherence on fractals versus pointwise convergence for the
Schr\"odinger equation},
Comm. Math. Phys. \textbf{351} (2017), no. 1, 341--359.
\newblock DOI: 10.1007/s00220-016-2722-8.

\bibitem{OSTTW2012}
R. Oberlin, A. Seeger, T. Tao, C. Thiele, and J. Wright,
\emph{A variation norm Carleson theorem},
J. Eur. Math. Soc. (JEMS) \textbf{14} (2012), no. 2, 421--464.
\newblock DOI: 10.4171/JEMS/307.

\bibitem{PX1988}
G. Pisier and Q. Xu,
\emph{The strong $p$-variation of martingales and orthogonal series},
Probab. Theory Related Fields \textbf{77} (1988), no. 4, 497--514.
\newblock DOI: 10.1007/BF00959613.

\bibitem{R2008}
K. M. Rogers,
\emph{A local smoothing estimate for the Schr\"odinger equation},
Adv. Math. \textbf{219} (2008), no. 6, 2105--2122.
\newblock DOI: 10.1016/j.aim.2008.08.008.

\bibitem{RS2010}
K. M. Rogers and A. Seeger,
\emph{Endpoint maximal and smoothing estimates for Schr\"odinger equations},
J. Reine Angew. Math. \textbf{640} (2010), 47--66.
\newblock DOI: 10.1515/crelle.2010.018.

\bibitem{RV2007}
K. M. Rogers and P. Villarroya,
Global estimates for the Schr\"odinger maximal operator,
Ann. Acad. Sci. Fenn. Math. \textbf{32} (2007), 425--435.

\bibitem{S1987}
P. Sj\"olin,
\emph{Regularity of solutions to the Schr\"odinger equation},
Duke Math. J. \textbf{55} (1987), no. 3, 699--715.
\newblock DOI: 10.1215/S0012-7094-87-05535-9.

\bibitem{SW2001}
E. M. Stein and S. Wainger,
\emph{Oscillatory integrals related to Carleson's theorem},
Math. Res. Lett. \textbf{8} (2001), nos. 5--6, 789--800.
\newblock DOI: 10.4310/MRL.2001.v8.n6.a9.


\bibitem{V1988}
L. Vega,
\emph{Schr\"odinger equations: pointwise convergence to the initial data},
Proc. Amer. Math. Soc. \textbf{102} (1988), no. 4, 874--878.
\newblock DOI: 10.1090/S0002-9939-1988-0934859-0.

\end{thebibliography}
\end{document}